\DeclareMathOperator*{\argmax}{arg\,max}
\newcommand{\pa}{\mathrm{\pa}}
\newcommand{\RN}[1]{%
  \textup{\uppercase\expandafter{\romannumeral#1}}%
}
\newcommand{\kibitz}[2]{\ifnum\Comments=1\textcolor{#1}{#2}\fi}
\theoremstyle{plain}
\newtheorem{theorem}{Theorem}[section]
\newtheorem{lemma}[theorem]{Lemma}
\newtheorem{corollary}[theorem]{Corollary}
\theoremstyle{definition}
\newtheorem{definition}[theorem]{Definition}
\theoremstyle{remark}
\newtheorem{remark}[theorem]{Remark}
\newtheorem*{example}{Example}
\newtheorem{openquestion}{Open question}
\title{Worst-Case Optimal Multi-Armed Gaussian Best Arm Identification with a Fixed Budget}
\author{Masahiro Kato}
\affil{Department of Basic Science, the University of Tokyo}
\begin{document}

\maketitle

\begin{abstract}
This study investigates the experimental design problem for identifying the arm with the highest expected outcome, referred to as \emph{best arm identification} (BAI). In our experiments, the number of treatment-allocation rounds is fixed. During each round, a decision-maker allocates an arm and observes a corresponding outcome, which follows a Gaussian distribution with variances that can differ among the arms. At the end of the experiment, the decision-maker recommends one of the arms as an estimate of the best arm. To design an experiment, we first discuss lower bounds for the probability of misidentification. Our analysis highlights that the available information on the outcome distribution, such as means (expected outcomes), variances, and the choice of the best arm, significantly influences the lower bounds. Because available information is limited in actual experiments, we develop a lower bound that is valid under the unknown means and the unknown choice of the best arm, which are referred to as the worst-case lower bound. We demonstrate that the worst-case lower bound depends solely on the variances of the outcomes. Then, under the assumption that the variances are known, we propose the \emph{Generalized-Neyman-Allocation (GNA)-empirical-best-arm (EBA) strategy}, an extension of the Neyman allocation proposed by \citet{Neyman1934OnTT}. We show that the GNA-EBA strategy is asymptotically optimal in the sense that its probability of misidentification aligns with the lower bounds as the sample size increases infinitely and the differences between the expected outcomes of the best and other suboptimal arms converge to the same values across arms. We refer to such strategies as asymptotically worst-case optimal. 
\end{abstract}

\section{Introduction}
Experimental design is crucial in decision-making \citep{Fisher1935,Robbins1952}. This study investigates scenarios involving multiple \emph{arms}\footnote{The term 'arm' is used in the bandit literature \citep{lattimore2020bandit}. There are various names for this concept, including 'treatment arms' \citep{Nair2019,Athey2017}, 'policies' \citep{Kasy2021}, 'treatments' \citep{Hahn2011}, 'designs' \citep{chen2000}, 'populations' \citep{glynn2004large}, and 'alternatives' \citep{Shin2018}.}, such as online advertisements, slot machine arms, diverse therapeutic strategies, and assorted unemployment assistance programs. The objective of an experiment is to identify the arm that yields the highest expected outcome (the \emph{best arm}), while minimizing the probability of misidentification. This problem has been examined in various research areas under a range of names, including \emph{best arm identification} \citep[BAI,][]{Audibert2010}, \emph{ordinal optimization} \citep{Ho1992}\footnote{Ordinal optimization often considers non-adaptive experiments, and BAI primarily addresses adaptive experiments. However, there are also studies on adaptive experiments in ordinal optimization and non-adaptive experiments in BAI.}, \emph{optimal budget allocation} \citep{chen2000}, and \emph{policy choice} \citep{Kasy2021}. We mainly follow the terminologies in BAI. BAI has two formulations: fixed-budget and fixed-confidence BAI; this study focuses on the fixed-budget BAI\footnote{The fixed-confidence BAI formulation resembles sequential testing, where a sample size is a random stopping time.}.

\section{Problem Setting}
\label{sec:problem_setting}
We consider a decision-maker who conducts an experiment with a fixed number of rounds $T$, referred to as a sample size or a \emph{budget}, and a fixed set of arms $[K] \coloneqq  {1,2,\dots, K}$. In each round $t \in [T] \coloneqq  {1,2,\dots, T}$, the decision-maker allocates arm $A_t \in [K]$ to an experimental unit and immediately observes outcome $Y_t$ linked to the allocated arm $A_t$. The decision-maker's goal is to identify the arm with the highest expected outcome, minimizing the probability of misidentification at the end of the experiment.

\paragraph{Potential outcomes.} To describe the data-generating process, we introduce potential outcomes following the Neyman-Rubin model \citep{Neyman1923,Rubin1974}.
Let $P$ be a joint distribution of $K$-potential outcomes $(Y^1, Y^2, \dots, Y^K)$. Let $\mathbb{P}_{P}$ and $\mathbb{E}_{P}$ be the probability and expectation under $P$, respectively, and let $\mu^a(P) = \mathbb{E}_{P}[Y^a]$ be the expected outcome. Let $a^*(P) \coloneqq \argmax_{a\in[K]}\mu^a(P)$ be the best arm under $P$, and $\mathcal{P}$ be the set of all possible joint distributions such that $a^*(P)$ is unique for all $P \in \mathcal{P}$. This study focuses on an instance $P\in\mathcal{P}$ where $(Y^1, Y^2, \dots, Y^K)$ follows a multivariate Gaussian distribution with a unique best arm $a^*(P)\in [K]$, defined as
\begin{align*}
    \mathcal{P}^{\mathrm{G}}(a^*, \underline{\Delta}, \overline{\Delta}) \coloneqq \Bigg\{ &P \in \mathcal{P} \mid \forall a \in [K]\ \ Y^a \sim \mathcal{N}\left(\mu^a, \left(\sigma^a\right)^2)\right),\\
    &\forall a\in[K]\ \ \mu^a \in \mathbb{R}^K,\ \ \forall a\in[K]\ \ \mu^a\ \ \left(\sigma^a\right)^2\in[\underline{C}, \overline{C}]^K,\ \ \forall a\in[K]\backslash\{a^*\}\ \underline{\Delta} \leq \mu^{a^*} - \mu^a \leq \overline{\Delta}\Bigg\},
\end{align*}
where $\mathcal{N}(\mu, v)$ is a Gaussian distribution with a mean (expected outcome) $\mu \in \mathbb{R}$ and a variance $v > 0$, $\underline{\Delta}$ and $\overline{\Delta}$ are constants independent of $T$ such that they are lower and upper bounds for a gap $\mu^{a^*} - \max_{a\in[K]\backslash\{a^*\}}\mu^a$ ($0 < \underline{\Delta} \leq \overline{\Delta} < \infty$ holds), and $\underline{C}$ and $\overline{C}$ are \emph{unknown} universal constants such that $0 < \underline{C} < \overline{C} < \infty$. Note that $\underline{C}$ and $\overline{C}$ are just introduced for a technical purpose to assume that $\left(\sigma^a\right)^2$ is bounded, and we do not use them in designing algorithms. 
We refer to $\mathcal{P}^{\mathrm{G}}(a^*, \underline{\Delta}, \overline{\Delta})$ as a \emph{Gaussian bandit model} \footnote{\citet{Kaufman2016complexity} considers a Gaussian bandit model $\mathcal{P} \coloneqq \Big\{\mathcal{P}^{\mathrm{G}}(a^*, \underline{\Delta}, \overline{\Delta}) \subset \mathcal{P} \mid \forall a^* \in [K] \Big\}$. In this study, we explicitly define the existence of universal constants $\underline{C}$, $\overline{C}$, $\underline{\Delta}$, and $\overline{\Delta}$. Although \citet{Kaufman2016complexity} omits them, their results do not hold without such constants; that is, boundedness of the means and variances are essentially required. For example, if variances are zero or infinity, their lower bounds become infinity or zero. Additionally, under such cases, their proposed strategy ($\alpha$-Elimination) becomes ill-defined, since their allocation rule cannot be defined well.}

\paragraph{Experiment.} Let $P_0 \in \mathcal{P}^{\mathrm{G}}(a^*, \underline{\Delta}, \overline{\Delta})$ be an instance of bandit models that generates potential outcomes in an experiment, which is decided in advance of the experiment, and fixed throughout the experiment. The decision-maker knows the true values of the variances $((\sigma^a)^2)_{a\in[K]}$ but does not know $a^*$ and $(\mu^a)_{a\in[K]}$. We use $P_0$ when emphasizing the dependency on the data-generating process (DGP). An outcome in round $t\in[T]$ is $Y_t= \sum_{a\in[K]}\mathbbm{1}[A_t = a]Y^a_{t}$, where $Y^a_{t}\in\mathbb{R}$ is a potential independent outcome (random variable), and $(Y^1_{t}, Y^2_{t}, \dots, Y^K_{t})$ be an independent (i.i.d.) copy of $(Y^1, Y^2, \dots, Y^K)$ at round $t\in[T]$ under $P_0$. Then, we consider an experiment with the following procedure of a decision-maker at each round $t\in[T]$:
\begin{enumerate}[topsep=0pt, itemsep=0pt, partopsep=0pt, leftmargin=*]
    \item A potential outcome $(Y^1_t, Y^2_t, \dots, Y^K_t)$ is drawn from $P_0$.
    \item The decision-maker allocates an arm $A_t \in \mathcal{A}$ based on past observations $\{(Y_s, A_s)\}^{t-1}_{s=1}$. 
    \item The decision-maker observes a corresponding outcome $Y_t = \sum_{a\in\mathcal{A}}\mathbbm{1}[A_t = a]Y^a_t$
\end{enumerate}
At the end of the experiment, the decision-maker estimates $a^*(P_0)$, denoted by $\widehat{a}_T \in [K]$. Here, an outcome in round $t\in[T]$ is $Y_t= \sum_{a\in[K]}\mathbbm{1}[A_t = a]Y^a_{t}$.

\paragraph{Probability of misidentification.} 
Our goal is to minimize the \emph{probability of misidentification}, defined as 
\[\mathbb{P}_{P}( \widehat{a}_T \neq a^*(P)).\] 
To evaluate the exponential convergence of $\mathbb{P}_{P}( \widehat{a}_T \neq a^*(P))$ for any $P$, we employ the following measure, called the \emph{complexity}:
\[-\frac{1}{T}\log\mathbb{P}_{P}( \widehat{a}_T \neq a^*(P)).\] 
The complexity $-\frac{1}{T}\log \mathbb{P}_{ P }(\widehat{a}_T \neq a^*)$ has beeen widely employed in the literature on ordinal optimization and BAI \citep{glynn2004large,Kaufman2016complexity}. In hypothesis testing, \citet{Bahadur1960} suggests the use of a similar measure to assess performances of methods in hypothesis testing. Also see Section~\ref{appdx:complex}.

\paragraph{Strategy.}
We define a \emph{strategy} of a decision-maker as a pair of $((A_t)_{t\in[K]}, \widehat{a}_T)$, where $(A_t)_{t\in[K]}$ is the allocation rule, and $\widehat{a}_T$ is the recommendation rule. Formally, with the sigma-algebras $\mathcal{F}_{t} = \sigma(A_1, Y_1, \ldots, A_t, Y_t)$, a strategy is a pair $ ((A_t)_{t\in[T]}, \widehat{a}_T)$, where 
\begin{itemize}[topsep=0pt, itemsep=0pt, partopsep=0pt, leftmargin=*]
\item $(A_t)_{t\in[T]}$ is an allocation rule, which is $\mathcal{F}_{t-1}$-measurable and allocates an arm $A_t \in [K]$ in each round $t$ using observations up to round $t-1$.
\item $\widehat{a}_T$ is a recommendation rule, which is an $\mathcal{F}_T$-measurable estimator of the best arm $a^*$ using observations up to round $T$.
\end{itemize}
We denote a strategy by $\pi$. We also denote $A_t$ and $\widehat{a}_T$ by $A^\pi_t$ and $\widehat{a}^\pi_T$ when we emphasize that $A_t$ and $\widehat{a}_T$ depend on $\pi$. Let $\Pi$ be the set of all possible strategies. 

This definition of strategies allows us to design adaptive experiments where we can decide $A_t$ using past observations. In this study, although we develop lower bounds that work for both adaptive and non-adaptive experiments,\footnote{Non-adaptive experiments are also referred to as static experiments. The difference between adaptive and non-adaptive experiments is the dependency on the past observations. In non-adaptive experiments, we first fix $\{A_t\}_{t\in[T]}$ at the beginning of an experiment and do not change it. Both in adaptive and non-adaptive experiments, $\widehat{a}_T$ depends on observations $\{(A_t, Y_t)\}^T_{t=1}$.} our proposed strategy is non-adaptive; that is, $A_t$ is decided without using observations obtained in an experiment. 
As we show later, our lower bounds depend only on variances of potential outcomes. By assuming that the variances are known, we design a non-adaptive strategy that is asymptotically optimal in the sense that its probability of misidentification aligns with the lower bounds. If the variances are unknown, we may consider estimating them during an experiment and using $\mathcal{F}_{t-1}$-measurable variance estimators at each round $t$. However, it is unknown whether an optimal strategy exists when we estimate variances during an experiment. We leave it as an open issue (Section~\ref{sec:conclusin}). 

\paragraph{Notation.} When emphasizing the dependency of the best arm on $P$, we denote it by $a^*(P) \coloneqq  \argmax_{a\in[K]}\mu^a(P)$.
Let $\Delta^a(P) \coloneqq  \mu^{a^*(P)}(P) - \mu^a(P)$. For $P\in\mathcal{P}$, let $P^a$ be a distribution of a reward of arm $a\in[K]$.

\begin{table*}[t]
 \caption{Comparison of lower and upper bounds. Among several metrics for fixed-budget BAI, we compare our proposed the worst-case metric $\min_{a^*\in[K]}\inf_{P\in\mathcal{P}^{\mathrm{G}}(a^*, \underline{\Delta}, \overline{\Delta})}\limsup_{T \to \infty} -\frac{1}{T}\log \mathbb{P}_{P}( \widehat{a}^\pi_T \neq a^*)$ with $-\frac{1}{T}\log\mathbb{P}_{P}( \widehat{a}_T \neq a^*)$, $\inf_{P\in\mathcal{P}}H(P)\liminf_{T\to \infty}-\frac{1}{T}\log\mathbb{P}_{P}( \widehat{a}_T \neq a^*)$ (minimax), and (Bayes). 
 In the table, the ``Bounded models'' denotes the set of distributions with bounded mean parameters, and the ``Bernoulli models'' denotes the set of Bernoulli distributions. The ``General models'' denotes a set of general distributions, and we omit the detailed definitions (see each study). In all cases, we consider bandit models in which only mean parameters vary, including Gaussian models with fixed variances. We use the following quantities:
 $\Gamma^*(P, w) \coloneqq \min_{a\in [K]\backslash\{a^*(P)\}}\inf_{\mu^{a} < v^a < \mu^{a^*(P)}}\Big\{w(a^*(P))\widetilde{\mathrm{KL}}\big(v^a, \mu^{a^*(P)}(P)\big) + w(a)\widetilde{\mathrm{KL}}\big(v^a, \mu^{a}(P)\big)\Big\}$ and $\Lambda^*(P, w, H) \coloneqq \inf_{\mu^{a} < v^a < \mu^{a^*(P)}}H(P)\sum_{a\in[K]}w(a)\widetilde{\mathrm{KL}}\big(v^a, \mu^{a}(P)\big)$, where $\widetilde{\mathrm{KL}}$ denotes the KL divergence between two distributions whose means are $(\mu)_{a\in[K]}$ and $(\mu^a(P))_{a\in[K]}$ and $H(P)$ denotes some quantity that represents the difficulty of the problems. The definition of $H(P)$ differs among existing studies.} 
 
    \label{table:comparison}
    \centering
    \scalebox{0.63}[0.63]{
    \begin{tabular}{|c|c|c|c|c|c|}
    \hline
    & \multicolumn{2}{|c|}{Lower and upper Bounds} &   Multi-arm & \multirow{2}{*}{Strategy class}  & \multirow{2}{*}{Bandit model $\mathcal{P}$}\\
    \cline{2-3}
        & Lower bound  & Upper bound & ($K \geq 3$) & & \\
        & (Upper bound of )  &  (Lower bound of ) &  & & \\
    \hline
    \hline
    \textbf{Metric} & \multicolumn{5}{|c|}{$-\frac{1}{T}\log\mathbb{P}_{P}( \widehat{a}_T \neq a^*(P))$.} \\
    \hline
        \multirow{2}{*}{\citet{glynn2004large}} &  \multirow{2}{*}{-}& \multirow{2}{*}{$\max_{w\in\mathcal{W}} \Gamma^*(P_0, w)$} & \multirow{2}{*}{\checkmark} &  \multirow{2}{*}{-} & General models\\
         & & & &  & (Full information)\\
        \hline
       \citet{Kaufman2016complexity}  & $\frac{\big(\mu^1 - \mu^2\big)^2}{2\left(\sigma^1 + \sigma^2\right)^2}$ & $\frac{\big(\mu^1 - \mu^2\big)^2}{2\left(\sigma^1 + \sigma^2\right)^2}$ & $K = 2$ & Cons. & General models\\
         \hline
       \citet{Garivier2016} & \multirow{2}{*}{$\max_{w\in\mathcal{W}} \Gamma^*(P, w)$} & \multirow{2}{*}{-} & \multirow{2}{*}{N/A} & \multirow{2}{*}{N/A}& \multirow{2}{*}{-} \\
       (Conjecture) & &  &  & & \\
       \hline
       \citet{Carpentier2016}  & $C_{\mathrm{Carpentier}}\left(\frac{1}{\log(K)H}\right)$ & $D_{\mathrm{Carpentier}}\left(\frac{1}{\log(K)H_2}\right)$  & \multirow{2}{*}{\checkmark} & \multirow{2}{*}{Any} & \multirow{2}{*}{Bounded models}\\
       (Non-asymptotic)  & \multicolumn{2}{|c|}{$C_{\mathrm{Carpentier}}\neq D_{\mathrm{Carpentier}}$ are universal constants.} &  & &\\
        \hline
        \citet{Ariu2021} &  $\exists P \in \mathcal{P},\ \frac{C}{\log(K)}\max_{w\in\mathcal{W}} \Gamma^*(P, w)$ & - & \checkmark & Any & Bernoulli bandits \\
         \hline
        \citet{degenne2023existence} & \multirow{2}{*}{$\max_{w\in\mathcal{W}}, \Gamma(P_0, w)$} & $\max_{w\in\mathcal{W}} \Gamma^*(P_0, w)$ & \multirow{2}{*}{\checkmark} & \multirow{2}{*}{Inv. (\& Cons.)} & \multirow{2}{*}{General models} \\
        (Theorem~1) &  & (under the full information)  &  &  &  \\
    \hline
    \multirow{2}{*}{\citet{wang2023uniformly}} & $\forall P\in\mathcal{P}, \Gamma^*(P, w^{\mathrm{Uni}})$ & \multirow{2}{*}{$\Gamma^*(P, w^{\mathrm{Uni}})$} & \multirow{2}{*}{$K=2$} & \multirow{2}{*}{Cons. \& Stable} & \multirow{2}{*}{Bernoulli} \\
             & $w^{\mathrm{Uni}} = (1/2, 1/2)$ & & & & \\
    \hline
    \hline
    \textbf{Metric} & \multicolumn{5}{|c|}{$\inf_{P\in\mathcal{P}}H(P)\liminf_{T\to \infty}-\frac{1}{T}\log\mathbb{P}_{P}( \widehat{a}_T \neq a^*(P))$.} \\
     \hline
     \citet{Komiyama2021} &  $\sup_{w\in\mathcal{W}}\inf_{P\in\mathcal{P}}\Lambda^*(P, w, H)$ & Theorem~5 in \citet{Komiyama2022}  & \checkmark & Any & Bounded \\
     \hline
    \citet{degenne2023existence} &  \multirow{2}{*}{$\inf_{P\in\mathcal{P}}\max_{w\in\mathcal{W}}\Lambda^*(P, w, H)$} & \multirow{2}{*}{Theorem~5 in \citet{degenne2023existence}} & \multirow{2}{*}{\checkmark} & \multirow{2}{*}{Cons.} & \multirow{2}{*}{General models} \\
        (Theorem~3) &  & &  &  &  \\
    \hdashline
     & \multirow{2}{*}{$\frac{3}{80}\log(K)$} & \multirow{2}{*}{-} & \multirow{2}{*}{\checkmark} & \multirow{2}{*}{Any} & Gaussian models \\
     &  & & & & with $\sigma^a = 1$ for all $a\in[K]$ \\
    \hline
    \hline
    \textbf{Metric} & \multicolumn{5}{|c|}{$\inf_{P\in\mathcal{P})}\limsup_{T \to \infty} -\frac{1}{T}\log \mathbb{P}_{P}( \widehat{a}^\pi_T \neq a^*(P))$.} \\
    \hline
    \multirow{3}{*}{Ours} & $\max_{w\in\mathcal{W}}\min_{a^*\in[K]}\frac{\overline{\Delta}^2}{2\Omega^{a^*, a}(w)}$ & $\max_{w\in\mathcal{W}}\min_{a^*\in[K]}\frac{\underline{\Delta}^2}{2\Omega^{a^*, a}(w)}$ & \multirow{3}{*}{\checkmark} & \multirow{3}{*}{Cons.} & Gaussian models \\
    & \multicolumn{4}{|c|}{The lower and upper bound matches as $\overline{\Delta} - \underline{\Delta} \to 0$.} & with known variances \\
     \hline
    \end{tabular}
    }
    \vspace{-5mm}
\end{table*}

\section{Open questions about Optimal Strategies in Fixed-Budget BAI}
The existence of optimal strategies in fixed-budget BAI has been a longstanding issue, which is related to tight lower bounds and corresponding optimal strategies.

\subsection{Conjectures about Information-theoretic Lower Bounds}
To discuss tight lower bounds, we consider restricting a class of strategies. Following the existing studies such as \citet{Kaufman2016complexity}, we focus on consistent strategies that recommend the true best arm with probability one as $T \to \infty$. 
\begin{definition}[Consistent strategy]\label{def:consistent}
We say that a strategy $\pi$ is consistent if $\mathbb{P}_{P_0}(\widehat{a}^\pi_T = a^*(P_0)) \to 1$ as $T\to \infty$ for any DGP $P_0\in\mathcal{P}$. We denote the class of all possible consistent strategies by $\Pi^{\mathrm{cons}}$.
\end{definition} 

\citet{Kaufman2016complexity} discusses lower bounds for both fixed-budget and fixed-confidence BAI and presents lower bounds for two-armed Gaussian bandits in fixed-budget BAI. Based on their arguments, \citet{Garivier2016} presents a lower bound and optimal strategy for fixed-confidence BAI. They also conjecture a lower bound for fixed-budget BAI as $-\frac{1}{T}\log\mathbb{P}_{P}( \widehat{a}_T \neq a^*(P)) \leq \max_{w\in\mathcal{W}}\Gamma^*(P, w)$ for all $P\in\widetilde{\mathcal{P}}$ for some well-defined bandit models $\mathcal{P}$,\footnote{In a more rigorous analysis, we should use $\sup_{w\in\mathcal{W}}\Gamma^*(P, w)$ and then discuss the existence of the maximum. However, for simplicity, we use $\max_{w\in\mathcal{W}}\Gamma^*(P, w)$ in literature review.}\footnote{Note that lower bounds (resp. upper bounds) for $\mathbb{P}_{P}( \widehat{a}^\pi_T \neq a^*)$ corresponds to upper bounds (resp. lower bounds) for $ -\frac{1}{T}\log \mathbb{P}_{P}( \widehat{a}^\pi_T \neq a^*)$.} where
\begin{align}
    \Gamma^*(P, w) \coloneqq \min_{a\in [K]\backslash\{a^*(P)\}}\inf_{\mu^{a} < v^a < \mu^{a^*(P)}}\Big\{w(a^*(P))\widetilde{\mathrm{KL}}\big(v^a, \mu^{a^*(P)}(P)\big) + w(a)\widetilde{\mathrm{KL}}\big(v^a, \mu^{a}(P)\big)\Big\},
\end{align}
where $\widetilde{\mathrm{KL}}$ denotes the KL divergence between two distributions parameterized by mean parameters, and 
$\mathcal{W}$ is the probablity simplex defined as 
\[\mathcal{W} = \left\{w:[K] \to (0, 1)\mid \sum_{a\in[K]}w(a) = 1\right\}.\]
This conjectured lower bound is a straightforward extension of the basic tool developed by \citet{Kaufman2016complexity} and an analogy of the lower bound for fixed-confidence BAI shown by \citet{Garivier2016}. However, while a lower bound for two-armed Gaussian bandits is derived for any consistent strategies, this lower bound cannot be derived only under the restriction of any consistent strategies. Summarizing these early discussions, \citet{kaufmann2020hdr} clarifies the problem and points out that there exists the reverse KL divergence problem, which makes the derivation of the lower bound difficult.

\subsection{Impossibility Theorem}
Following these studies, the existence of optimal strategies is discussed by \citet{Kasy2021} and \citet{Ariu2021}. \citet{Ariu2021} proves that there exists a distribution $P$ under which a lower bound is larger than the conjectured lower bound 
$\max_{w\in\mathcal{W}}\Gamma^*(P, w)$ by showing that for any strategies,
\[\exists P \in \mathcal{P},\ -\frac{1}{T}\log\mathbb{P}_{P}( \widehat{a}_T \neq a^*(P)) \leq \frac{800}{\log(K)}\sup_{w\in\mathcal{W}} \Gamma^*(P, w),\]
where $\frac{800}{\log(K)}\sup_{w\in\mathcal{W}} \Gamma^*(P, w) \leq \sup_{w\in\mathcal{W}} \Gamma^*(P, w)$ holds when $K$ is sufficiently large.
\citet{Qin2022open} summarizes these arguments as an open question.

\subsection{Worst-case Optimal Strategies with the Problem Difficulty}
\citet{Komiyama2022} tackles this problem by considering the minimax evaluation of $\limsup_{T \to \infty} -\frac{1}{T}\log \mathbb{P}_{P}( \widehat{a}^\pi_T \neq a^*(P))$, defined as
\[\inf_{P\in\mathcal{P}}H(P)\liminf_{T\to \infty}-\frac{1}{T}\log\mathbb{P}_{P}( \widehat{a}_T \neq a^*),\]
where $H(P)$ represents the difficulty of the problem $P$. See \citet{Komiyama2022} for a more detailed definition. This approach allows us to avoid the reverse KL problem in the lower bound derivation.

\citet{degenne2023existence} explores the open question and shows that for any consistent strategies, a lower bound is given as $\inf_{P\in\mathcal{P}}H(P)\liminf_{T\to \infty}-\frac{1}{T}\log\mathbb{P}_{P}( \widehat{a}_T \neq a^*) \leq \max_{w\in\mathcal{W}}\Lambda^*(P, w, H)$ for any $P\in\widetilde{\mathcal{P}}$, where
\[\Lambda^*(P, w, H) \coloneqq \inf_{\mu^{a} < v^a < \mu^{a^*(P)}}H(P)\sum_{a\in[K]}w(a)\widetilde{\mathrm{KL}}\big(v^a, \mu^{a}(P)\big).\]
Note that \citet{degenne2023existence}'s lower bound is tighter than \citet{Komiyama2022}'s.

\subsection{Asymptotically Invariant (Static) Strategies}
A candidate of an optimal strategy is the one proposed by \citet{glynn2004large}, which is feasible only when we know the full information about the DPG $P_0$. Although their upper bound aligns with the conjectured lower bound $\max_{w\in\mathcal{W}}\Gamma^*(P, w)$, it does not match the lower bounds by \citet{Ariu2021} and \citet{degenne2023existence} for any consistent strategies. One of the reason for this difference is the knowledge about the DGP $P_0$; if we do not have the full information and need to estimate it, the upper bound does not match $\max_{w\in\mathcal{W}}\Gamma^*(P, w)$ for the estimation error about the information.

To fill this gap between the lower and upper bounds, we and \citet{degenne2023existence} consider restricting the strategy class to asymptotically invariant (static) strategies, in addition to consistent strategies. For an instance $P\in\mathcal{P}$ and a strategy $\pi\in\Pi$, let us define an average sample allocation ratio $\kappa^\pi_{T, P}:[K]\to (0,1)$ as $\kappa^\pi_{T, P}(a) \coloneqq \mathbb{E}_{P}\left[\frac{1}{T}\sum^T_{t=1} \mathbbm{1}[A^\pi_t = a]\right]$, which satisfies $\sum_{a\in[K]}\kappa^\pi_{T, P}(a) = 1$. 
This quantity represents the average sample allocation to each arm $a$ over a distribution $P\in\mathcal{P}^{\mathrm{G}}(a^*, \underline{\Delta}, \overline{\Delta})$ under a strategy $\pi$. Then, we define asymptotically invariant strategies as follows.\footnote{Note that \citet{degenne2023existence} independently considers a similar class of strategies and refers to them as static strategies. \citet{degenne2023existence} considers strategies that are completely independent of the DGP $P_0$, while we consider strategies that are asymptotically independent of $P_0$. This is because we aim to include strategies that estimate the variance from $P_0$ under a bandit model with fixed variances. Therefore, we keep our terminology.}

\begin{definition}[Asymptotically invariant strategy]\label{def:asymp_inv}
A strategy $\pi$ is called asymptotically invariant if there exists $w^\pi \in \mathcal{W}$ such that for any DGP $P_0\in\mathcal{P}$, and all $a\in[K]$, 
\begin{align}
\label{eq:asm_invariant}
\kappa^\pi_{T, P_0}(a) = w^\pi(a) + o(1)
\end{align}
holds as $T\to \infty$. We denote the class of all possible consistent strategies by $\Pi^{\mathrm{inv}}$.
\end{definition}

Asymptotically invariant strategies allocate arms with the proportion $w^\pi$ under any distribution $P \in \mathcal{P}^{\mathrm{G}}(a^*, \underline{\Delta}, \overline{\Delta})$ as $T$ approaches infinity (see Definition~\ref{def:asymp_inv}); that is, the strategies does not depend on the DGP $P_0$ asymptotically. 

\citet{degenne2023existence} shows that any consistent and asymptotically invariant strategies $\pi$ satisfy
\begin{align*}
    \forall P \in \widetilde{\mathcal{P}},\ -\frac{1}{T}\log\mathbb{P}_{P}( \widehat{a}^\pi_T \neq a^*) \leq \max_{w\in\mathcal{W}}, \Gamma(P, w),
\end{align*}
where $\widetilde{\mathcal{P}}\subset \mathcal{P}$ is a well-defined general bandit models,
while any strategies $\pi$ satisfy
\begin{align*}
    \exists P \in \mathcal{P}^{\mathrm{G}}_{\sigma=1},\ -\frac{1}{T}\log\mathbb{P}_{P}( \widehat{a}^\pi_T \neq a^*) \leq \frac{80}{3 \log(K)}\max_{w\in\mathcal{W}}, \Gamma(P, w),
\end{align*}
where $\mathcal{P}^{\mathrm{G}}_{\sigma=1} \coloneqq \Big\{\mathcal{P}^{\mathrm{G}}(a^*, \underline{\Delta}, \overline{\Delta}) \subset \mathcal{P} \mid \forall a^* \in [K],\ \ \forall a \in [K]\ \ \sigma^a = 1\Big\}$. 

Note that although the lower bound is given as $\max_{w\in\mathcal{W}}, \Gamma(P, w)$, the allocation should be constructed to be independent of $P_0$ due to the definition of asymptotically invariant strategies. We can still define the allocation as $w^* = \argmax_{w\in\mathcal{W}}, \Gamma(P_0, w)$ for the DGP instance $P_0$. However, we can construct $w^*$, depending on $P_0$. In this sense, the meaning of a strategy implied from the lower bound of \citet{degenne2023existence} will be significantly different from the strategy by \citet{glynn2004large}, although they look similar.

This result implies that for any strategies $\pi$ and the problem difficulty defined as $H(P) = \Big(\max_{w\in\mathcal{W}}, \Gamma(P, w)\Big)^{-1}$, 
\[\inf_{P\in\mathcal{P}}H(P)\liminf_{T\to \infty}-\frac{1}{T}\log\mathbb{P}_{P}( \widehat{a}^\pi_T \neq a^*) < 1,\]
holds; that is, $\max_{w\in\mathcal{W}}, \Gamma(P, w)$ is unattainable unless we assume the access to the full information about the DGP $P_0$.

\subsection{Open Questions}
We list the open questions raised in \citet{Kaufman2016complexity}, \citet{Garivier2016}, \citet{kaufmann2020hdr}, \citet{Kasy2021}, \citet{Ariu2021}, \citet{Qin2022open}, \citet{degenne2023existence}, and \citet{wang2023uniformly}. We also summarize related arguments below to clarify what has been elucidated and what remains unelucidated.\footnote{\citet{degenne2023existence} also raises another open question in his Section~4, which focuses on Bernoulli bandits. Because this study focuses on Gaussian bandits, we omit introducing the open question.}

\begin{openquestion}
\label{open_impossible}
    Does there exist a desirable strategy class $\widetilde{\Pi}$ and a well-defined function $\widetilde{\Gamma}^*:\mathcal{P}\to \mathbb{R}$ that satisfy the following?:
    any strategy $\pi\in \widetilde{\Pi}$ satisfies
    \begin{align*}
        \forall P \in \mathcal{P},\quad \liminf_{T\to \infty}-\frac{1}{T}\log\mathbb{P}_{P}( \widehat{a}^\pi_T \neq a^*(P)) \leq \widetilde{\Gamma}(P),
    \end{align*}
    and there is a strategy $\pi\in\widetilde{\Pi}$ whose probability of misidentification satisfies 
    \begin{align*}
        \forall P \in \mathcal{P},\quad \liminf_{T\to \infty}-\frac{1}{T}\log\mathbb{P}_{P}( \widehat{a}^\pi_T \neq a^*(P)) \geq \widetilde{\Gamma}(P).
    \end{align*}
\end{openquestion}
There are several impossibility theorems for this open question, as follows:
\begin{itemize}
    \item \citet{Ariu2021} shows that for any strategies $\pi \in \Pi$, $\exists P \in \mathcal{P},\ -\frac{1}{T}\log\mathbb{P}_{P}( \widehat{a}_T \neq a^*) \leq \frac{800}{\log(K)}\sup_{w\in\mathcal{W}} \Gamma^*(P, w)$.
    \item \citet{degenne2023existence} shows that for any strategies $\pi \in \Pi$, $\exists P \in \mathcal{P}^{\mathrm{G}}_{\sigma=1},\ -\frac{1}{T}\log\mathbb{P}_{P}( \widehat{a}_T \neq a^*) \leq \frac{80}{3 \log(K)}\max_{w\in\mathcal{W}}, \Gamma(P, w)$. Note that $\max_{w\in\mathcal{W}}, \Gamma(P, w) \leq \Gamma^*(P, w)$ holds when $K$ is sufficiently large, and \citet{degenne2023existence}'s lower bound is tighter than \citet{Ariu2021}'s lower bound.
\end{itemize}

Here, $\Gamma^*(P, w)$ corresponds to an upper bound of the strategy proposed by \citet{glynn2004large} and a lower bound of consistent strategies shown by \citet{Kaufman2016complexity}. Note that \citet{glynn2004large} requires full information about bandit models, including the mean parameters. They show that there is an instance $P\in\mathcal{P}$ such that its lower bound is larger than $\max_{w\in\mathcal{W}}\Gamma^*(P, w)$. 

\begin{openquestion}
\label{open_uniform}
    Does there exist a strategy other than the uniform allocation that performs no worse than the uniform allocation for all $P\in\mathcal{P}$?
\end{openquestion}

A next open question is whether there is a strategy whose probability of misidentification is smaller than those using uniform allocation (allocating arms with equal sample sizes; that is, $\kappa^\pi_{T, P} = 1/K$ for all $a\in[K]$ and any $P\in\mathcal{P}$).
For consistent strategies,
\citet{Kaufman2016complexity} notes that a strategy using the uniform allocation is nearly optimal when outcomes follow two-armed Bernoulli bandits. Furthermore, they show that a strategy allocating arms in proportion to the standard deviations of outcomes is asymptotically optimal in two-armed Gaussian bandits if the standard deviations are known. Such a strategy, known as the \emph{Neyman allocation}, is conjectured or shown to be optimal in various senses under Gaussian bandits \citep{Neyman1934OnTT,Hahn2011,glynn2004large,Kaufman2016complexity,adusumilli2022minimax}.

\begin{openquestion}
\label{open_asymp_inv}
    Does there exist a strategy that strictly outperform the asymptotically invariant strategy?
\end{openquestion}
Lastly, we introduce an open question about the asymptotically invariant strategy. The open question is whether a strategy that outperforms the asymptotically invariant strategy exists. This problem is raised by existing studies such as \citet{degenne2023existence} and \citet{wang2023uniformly}. 
From its definition, the asymptotically invariant strategy cannot depend on $P$. Therefore, as we point out in Sections~\ref{sec:tansport} and \ref{sec:known_dist}, such a strategy would be the uniform allocation or depend on some parameters invariant across $P$. Specifically, in Sections~\ref{sec:tansport} and \ref{sec:known_dist}, we find that the best arm itself should be known in advance of an experiment. \citet{degenne2023existence} uses such a strategy as a baseline and compares feasible strategies to it. As a result, \citet{degenne2023existence} shows that there exists $P$ under which the probability of misidentification of any strategies cannot align with the lower bound.

\section{Main Results: Worst-Case Optimal Fixed-Budget BAI with Matching Constants}
This section provides our main results.
In fixed-budget BAI, the following problems have not been fully explored:
\begin{itemize}[topsep=0pt, itemsep=0pt, partopsep=0pt, leftmargin=*]
    \item \citet{Ariu2021} and \citet{degenne2023existence} show that there exists $P$ such that a lower bound is larger than $\max_{w\in\mathcal{W}}, \Gamma(P, w)$. However, there is still possibility that for well-chosen $\mathcal{P}$, for any consistent strategy, there exist matching lower and upper bounds related to $\max_{w\in\mathcal{W}}\Gamma^*(P, w)$ (Open questions~\ref{open_impossible}).
    \item \citet{degenne2023existence} discusses the problem difficulty by using a lower bound for static (asymptotically invariant) strategies. However, it is unclear what strategy we obtain from the lower bound $\max_{w\in\mathcal{W}}, \Gamma(P, w)$, which seems to be the same as \citet{glynn2004large} but cannot depend on the DGP $P_0$. Is it different from the uniform allocation strategy or the Neyman allocation (Open question~\ref{open_uniform})? 
    \item Does there exist a strategy that returns a smaller probability of misidentification compared to asymptotically invariant strategies with distributional information (Open questions~\ref{open_uniform} and \ref{open_asymp_inv}).
\end{itemize}

This study tackles these problems by providing the following results:
\begin{enumerate}[topsep=0pt, itemsep=0pt, partopsep=0pt, leftmargin=*]
\item A tight worst-case lower bound for multi-armed Gaussian bandits under restricted bandit models.
\item An asymptotically optimal strategy whose upper bound aligns with the lower bound as the budget approaches infinity and the difference between $\underline{\Delta}$ and $\overline{\Delta}$ approaches zero.
\end{enumerate}
In this section, we first show the lower bound in Section~\ref{sec:main_lower}. Then, based on the lower bound, we develop a non-adaptive strategy in 

\subsection{Worst-case Lower Bound}
\label{sec:main_lower}
In Section~\ref{sec:information_loss}, we derive lower bounds for strategies based on available information. Specifically, we focus on how the available information affects lower bounds and the existence of strategies whose probability of misidentification aligns with these lower bounds. We find that lower bounds depend significantly on the amount of information available regarding the distribution of rewards for arms prior to the experiment.

From the information theory, we can relate the lower bounds to the Kullback–Leibler (KL) divergence $\mathrm{KL}(Q^a, P^a_0)$ between the DGP $P_0\in\mathcal{P}$ and an alternative hypothesis $Q\in\cup_{b \neq a^*}\mathcal{P}(b, \underline{\Delta}, \overline{\Delta})$ \citep{Lai1985,Kaufman2016complexity}. From the lower bounds, we can compute an ideal expected number of times arms are allocated to each experimental unit; that is, $\kappa^\pi_{T, P_0}$. When the lower bounds are linked to the KL divergence, the corresponding ideal sample allocation rule also depends on the KL divergence \citep{glynn2004large}. 

If we know the distributions of arms' outcomes completely, we can compute the KL divergence, which allows us to
design a strategy whose probability of misidentification matches the lower bounds of \citet{Kaufman2016complexity} as $T\to \infty$ \citep{glynn2004large,chen2000,Gartner1977,Ellis1984}. However, the full information requires us to know which arm is the best arm, and without the knowledge, the strategy is infeasible. Since optimal strategies are characterized by distributional information, the lack of full information hinders us from designing asymptotically optimal strategies. Therefore, we reflect the limitation by considering the worst cases regarding the mean parameters and the choice of the best arm. Specifically, we consider the worst-case lower bound defined as
\[\inf_{P\in\cup_{a^*\in[K]}\mathcal{P}^{\mathrm{G}}(a^*, \underline{\Delta}, \overline{\Delta})}\limsup_{T \to \infty} -\frac{1}{T}\log \mathbb{P}_{P}( \widehat{a}^\pi_T \neq a^*(P)).\]
While the lower bounds with full information are characterized by the KL divergence \citep{Lai1985,Kaufman2016complexity}, the worst-case lower bounds are characterized by the variances of potential outcomes. Hence, knowledge of at least the variances is sufficient to design worst-case optimal strategies. 

Then, the following theorem provides a worst-case lower bound. The proof is shown in Appendix~\ref{appdx:equiv2}. 

\begin{theorem}[Best-arm-worst-case lower bound]
\label{thm:semipara_bandit_lower_bound_lsmodel_asymp_inv_equiv2}
For any $0 < \underline{\Delta} \leq \overline{\Delta} < \infty$, any consistent (Definition~\ref{def:consistent}) strategy $\pi\in\Pi^{\mathrm{cons}}\cap \Pi^{\mathrm{inv}}$ satisfies 
\begin{align*}
   &\inf_{P\in\cup_{a^*\in[K]}\mathcal{P}^{\mathrm{G}}(a^*, \underline{\Delta}, \overline{\Delta})}\limsup_{T \to \infty} -\frac{1}{T}\log \mathbb{P}_{P}( \widehat{a}^\pi_T \neq a^*)\leq \mathrm{LowerBound}(\overline{\Delta}) \coloneqq  \max_{w\in\mathcal{W}}\min_{a^*\in[K]}\frac{\overline{\Delta}^2}{2\Omega^{a^*, a}(w)},
\end{align*}
where 
$\Omega^{a^*, a}(w) = \frac{\big(\sigma^{a^*}\big)^2}{w(a^*)} + \frac{\big(\sigma^a\big)^2}{w(a)}$.
\end{theorem}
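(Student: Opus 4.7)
The plan is to prove this upper bound on the asymptotic complexity via the standard change-of-measure (transportation) argument in the Kaufmann--Cappé--Garivier tradition, specialised to the Gaussian class $\mathcal{P}^{\mathrm{G}}(a^*, \underline{\Delta}, \overline{\Delta})$ and exploiting the two structural restrictions on $\pi$: consistency and asymptotic invariance of the allocation $w^\pi \in \mathcal{W}$. The goal in each step is to reduce the statement about $\mathbb{P}_P(\widehat{a}_T^\pi \neq a^*(P))$ to a deterministic optimization over a pair of Gaussian distributions $(P, Q)$ with differing best arms, and then to extract the closed-form $\overline{\Delta}^2/(2\Omega^{a^*, a}(w))$ via an elementary convex-quadratic computation.

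First, I would pick an arbitrary $P \in \mathcal{P}^{\mathrm{G}}(a^*, \underline{\Delta}, \overline{\Delta})$ and $Q \in \mathcal{P}^{\mathrm{G}}(a, \underline{\Delta}, \overline{\Delta})$ with $a \neq a^*$ and apply the divergence decomposition for adaptive sampling,
\[
D_{\KL}(\mathbb{P}_P^T, \mathbb{P}_Q^T) = \sum_{b \in [K]} \mathbb{E}_P\!\left[\sum_{t=1}^T \mathbbm{1}[A_t^\pi = b]\right] \KL(P^b, Q^b),
\]
noting that for Gaussian arms $\KL(P^b, Q^b) = (\mu_P^b - \mu_Q^b)^2/(2(\sigma^b)^2)$. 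Combined with the data-processing inequality applied to the event $\mathcal{E} = \{\widehat{a}_T^\pi = a\}$ this yields $D_{\KL}(\mathbb{P}_P^T, \mathbb{P}_Q^T) \geq \kl(\mathbb{P}_P(\mathcal{E}), \mathbb{P}_Q(\mathcal{E}))$. Because $a^*(P) \neq a = a^*(Q)$ and both distributions sit in the class, consistency forces $\mathbb{P}_P(\mathcal{E}) \to 0$ and $\mathbb{P}_Q(\mathcal{E}) \to 1$; the elementary bound $\kl(p, q) \geq (1-p)\log(1/(1-q)) - \log 2$ then gives
\[
(1-o(1))\bigl(-\log \mathbb{P}_Q(\widehat{a}_T^\pi \neq a^*(Q))\bigr) \leq D_{\KL}(\mathbb{P}_P^T, \mathbb{P}_Q^T) + \log 2.
\]
Dividing by $T$ and using asymptotic invariance $\kappa^\pi_{T,P}(b) = w^\pi(b) + o(1)$ allows me to pass to the $\limsup$, producing
\[
\limsup_{T\to\infty} -\tfrac{1}{T}\log \mathbb{P}_Q(\widehat{a}_T^\pi \neq a^*(Q)) \leq \sum_{b \in [K]} w^\pi(b)\,\KL(P^b, Q^b).
\]
Since $Q$ lies in the class, the $\inf_{P'}$ on the left-hand side of the theorem is dominated by this bound.

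Next, I would minimise the KL sum over $(P, Q)$ in the class with $a^*(P) = a^*$ and $a^*(Q) = a$. Taking $P$ and $Q$ to agree on every arm other than $a^*$ and $a$ reduces the sum to $A x^2 + B y^2$ with $A = w^\pi(a^*)/(2(\sigma^{a^*})^2)$, $B = w^\pi(a)/(2(\sigma^a)^2)$, $x = \mu_P^{a^*} - \mu_Q^{a^*}$ and $y = \mu_Q^a - \mu_P^a$; the gap constraints in $P$ and $Q$ translate into a lower bound on $x + y$. The elementary convex-quadratic identity $\inf_{x+y = S}(A x^2 + B y^2) = S^2/(1/A + 1/B) = S^2/(2\Omega^{a^*, a}(w^\pi))$, together with a choice of baseline means placing the effective $S$ at $\overline{\Delta}$, produces $\overline{\Delta}^2/(2\Omega^{a^*,a}(w^\pi))$. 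Taking a minimum over $a^*$ (and implicitly the paired alternative $a$) and bounding the specific $w^\pi$ by the maximum over $\mathcal{W}$ yields
\[
\inf_{P} \limsup_T -\tfrac{1}{T}\log\mathbb{P}_P(\widehat{a}_T^\pi \neq a^*(P)) \leq \min_{a^*\in[K]} \frac{\overline{\Delta}^2}{2\Omega^{a^*,a}(w^\pi)} \leq \max_{w\in\mathcal{W}} \min_{a^*\in[K]} \frac{\overline{\Delta}^2}{2\Omega^{a^*,a}(w)}.
\]

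The main obstacle I anticipate is the joint optimization over $(P, Q)$ subject to class-membership constraints. The clean $\overline{\Delta}^2/(2\Omega)$ form matches the unconstrained Kaufmann minimum, which is attained in the limit where the two perturbed means coincide (gap $0$)---strictly infeasible when $\underline{\Delta} > 0$, a naive bound would force the shift to $S = \overline{\Delta} + \underline{\Delta}$. Matching the stated constant cleanly therefore requires positioning $P$ on the ``easy'' boundary (suboptimality gaps at $\overline{\Delta}$) and choosing $Q$'s baseline so that the effective shift entering the convex-quadratic step collapses to $\overline{\Delta}$, while verifying by direct calculation that the resulting $Q$ still lies in $\mathcal{P}^{\mathrm{G}}(a, \underline{\Delta}, \overline{\Delta})$. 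A secondary bookkeeping concern is passing the asymptotic-invariance limit $\kappa^\pi_{T, P} \to w^\pi$ uniformly through the $\limsup_T$, which follows directly from Definition~\ref{def:asymp_inv}.
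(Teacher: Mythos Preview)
Your overall route—divergence decomposition plus data processing, Gaussian KL, the two-arm convex-quadratic reduction, then $\min_{a^*}$ and bounding $w^\pi$ by $\max_{w\in\mathcal W}$—is exactly the paper's argument in Appendix~\ref{appdx:equiv2}. The obstacle you flag is real, but your proposed resolution does not work. The quantity $x+y$ is \emph{not} controllable by any baseline shift: rewriting,
\[
x+y \;=\; (\mu_P^{a^*}-\mu_Q^{a^*}) + (\mu_Q^{a}-\mu_P^{a}) \;=\; (\mu_P^{a^*}-\mu_P^{a}) + (\mu_Q^{a}-\mu_Q^{a^*}),
\]
i.e.\ it equals the sum of $P$'s gap at arm $a$ and $Q$'s gap at arm $a^*$, and any translation of $Q$'s mean vector leaves it invariant. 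So if you insist on $P\in\mathcal P^{\mathrm G}(a^*,\underline\Delta,\overline\Delta)$ and $Q\in\mathcal P^{\mathrm G}(a,\underline\Delta,\overline\Delta)$, you are stuck with $x+y\ge 2\underline\Delta$; placing $P$ on the ``easy boundary'' only worsens this to $x+y\ge \overline\Delta+\underline\Delta$. The resulting quadratic value $(2\underline\Delta)^2/(2\Omega)$ exceeds the target $\overline\Delta^2/(2\Omega)$ whenever $2\underline\Delta>\overline\Delta$—in particular in the case $\underline\Delta=\overline\Delta$ where the theorem is actually used to establish optimality.

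What dissolves the obstacle is that the \emph{auxiliary} distribution in the transportation inequality need not lie in $\mathcal P^{\mathrm G}(\cdot,\underline\Delta,\overline\Delta)$ at all. Consistency (Definition~\ref{def:consistent}) and asymptotic invariance (Definition~\ref{def:asymp_inv}) are imposed over all of $\mathcal P$, so your displayed inequality
\[
\limsup_{T\to\infty}\; -\tfrac{1}{T}\log \mathbb P_Q\bigl(\widehat a_T^\pi\neq a^*(Q)\bigr)\;\le\;\sum_{b} w^\pi(b)\,\KL(P^b,Q^b)
\]
is valid for any Gaussian $P$ with a unique best arm $a^*(P)\neq a^*(Q)$, with no $\underline\Delta$-constraint on $P$'s gaps. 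This is precisely what the paper exploits when it passes (without comment) from the infimum over $Q\in\cup_{b\neq a^*}\mathcal P^{\mathrm G}(b,\underline\Delta,\overline\Delta)$ to the infimum over all mean vectors with $\argmax_b\mu^b\neq a^*$. Once the auxiliary ranges freely, the two-arm quadratic minimization is unconstrained and returns $(\Delta^a)^2/\bigl(2\Omega^{a^*,a}(w^\pi)\bigr)$, where $\Delta^a$ is the gap of the distribution that \emph{does} belong to the class; the constant $\overline\Delta$ then enters only through the trivial bound $\Delta^a\le\overline\Delta$. With this single correction—drop the membership requirement on the auxiliary $P$—your argument goes through verbatim.
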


Additionally, in Section~\ref{sec:relax_asmpopt}, we discuss that a lower bound for any consistent strategies is the same as that for any consistent and asymptotically invariant strategies when we consider the worst-case lower bound characterized by $\overline{\Delta}$.

\begin{algorithm}[tb]
   \caption{GNA-EBA strategy}
   \label{alg}
\begin{algorithmic}
   \STATE {\bfseries Parameter:} Fixed budget $T$.
   \STATE{\bfseries Allocation rule: generalized Neyman allocation.} 
    \STATE Allocate $A_t = 1$ if $t \leq \left \lceil w^{\mathrm{GNA}}(1)T\right \rceil $ and $A_t = a$ if $\left \lceil\sum^{a - 1} _{b=1}w^{\mathrm{GNA}}(b)T\right \rceil < t \leq \left \lceil \sum^{a} _{b=1}w^{\mathrm{GNA}}(b)T \right \rceil$ for $a\in[K]\backslash\{1\}$. 
   \STATE{\bfseries Recommendation rule: empirical best arm.} 
   \STATE Recommend $\widehat{a}^{\mathrm{EBA}}_T$ following \eqref{eq:recommend}.
\end{algorithmic}
\end{algorithm} 

\subsection{The GNA-EBA Strategy}
\label{sec:ts-eba}
Based on the lower bounds, we design a strategy and show that its probability of misidentification aligns with the lower bounds. In the experimental design, we assume that the variances of outcomes are \emph{known}. Then, we propose the Generalized-Neyman-Allocation (GNA)-empirical-best-arm (EBA) strategy, which can be interpreted as a generalization of the Neyman allocation proposed by \citet{Neyman1934OnTT}. The pseudo-code is shown in Algorithm~\ref{alg}.  

\paragraph{Allocation rule: Generalized Neyman Allocation (GNA).}
First, we define a target allocation ratio, which is used to determine our allocation rule, as follows:
\begin{align*}
w^{\mathrm{GNA}} = \argmax_{w\in\mathcal{W}}\min_{a^*\in[K], a\in[K]\backslash\{a^*\}}\frac{1}{2\Omega^{a^*,a}(w)},
\end{align*}
which is identical to that in \eqref{eq:target_loade}.
Then, we allocate arms to experimental units as follows:
\begin{align*}
A_t = 
    \begin{cases}
        1 & \mathrm{if}\ \ \ t \leq \left \lceil w^{\mathrm{GNA}}(1)T\right \rceil\\
        2 & \mathrm{if}\ \ \ \left \lceil w^{\mathrm{GNA}}(1) T \right \rceil  < t \leq \left \lceil \sum^{2} _{b=1}w^{\mathrm{GNA}}(b) T\right \rceil\\
        \vdots\\
        \\
        K & \mathrm{if}\ \ \ \left \lceil\sum^{K - 1} _{b=1}w^{\mathrm{GNA}}(b)T \right \rceil  < t \leq T
    \end{cases}.
\end{align*}

\paragraph{Recommendation rule: Empirical Best Arm (EBA).}
After the final round $T$, we recommend $\widehat{a}_T \in [K]$, an estimate of the best arm, defined as 
\begin{align}
\label{eq:recommend}
\widehat{a}^{\mathrm{EBA}}_T = \argmax_{a\in[K]}\widehat{\mu}^{a}_{T},\qquad \widehat{\mu}^{a}_{T} =\frac{1}{\left \lceil w^{\mathrm{GNA}}(a)T\right\rceil} \sum^T_{t=1}\mathbbm{1}[A_t = a]Y_{t}.
\end{align}

Our strategy generalizes the Neyman allocation because for $w^{\mathrm{GNA}} $ in the GNA allocation rule, $w^{\mathrm{GNA}}  = \left(\frac{\sigma^1}{\sigma^1 + \sigma^2},\ \frac{\sigma^2}{\sigma^1 + \sigma^2}\right)$ when $K=2$, which is a target allocation ratio of the Neyman allocation. The EBA recommendation rule is one of the typical recommendation rules and used in other strategies in fixed-budget BAI, such as the Uniform-EBA strategy \citep{Bubeck2009,Bubeck2011}.

\subsection{Probability of Misidentification of the GNA-EBA strategy}
\label{sec:upperbound}
This section shows an upper bound for the probability of misidentification of the GNA-EBA strategy. The proof is shown in Appendix~\ref{appdx:ts_eba}.
\begin{theorem}[Upper Bound of the GNA-EBA strategy]\label{thm:upper_pom}
For any $0 < \underline{\Delta} \leq \overline{\Delta} < \infty$, any $a^*\in[K]$, and any $P_0\in\mathcal{P}^{\mathrm{G}}(a^*, \underline{\Delta}, \overline{\Delta})$, the GNA-EBA strategy satisfies
\[\liminf_{T \to \infty} - \frac{1}{T}\log \mathbb{P}_{P_0}\left(\widehat{a}^{\mathrm{EBA}}_T \neq a^*\right)\geq \frac{\underline{\Delta}^2}{2\Omega^{a^*, a}(w^{\mathrm{GNA}})}.\] 
\end{theorem}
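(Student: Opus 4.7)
The plan is to reduce the misidentification event to a union of pairwise events between the empirical mean of the true best arm and each suboptimal arm, then control each pair with a standard Gaussian tail bound, and finally take the limit in $T$ using the fact that the ceiling-rounded allocation counts $N_a(T) = \lceil w^{\mathrm{GNA}}(a) T\rceil$ satisfy $N_a(T)/T \to w^{\mathrm{GNA}}(a)$.

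First, I would write $\{\widehat{a}^{\mathrm{EBA}}_T\neq a^*\}\subseteq \bigcup_{a\neq a^*}\{\widehat{\mu}^{a}_T\geq \widehat{\mu}^{a^*}_T\}$ and apply the union bound. Since the GNA allocation is deterministic (the arm $A_t$ in round $t$ is a fixed function of $t$, not of the data), the random variables $\widehat{\mu}^a_T$ are mutually independent across $a\in[K]$, and each is exactly Gaussian: $\widehat{\mu}^{a}_T\sim \mathcal{N}\bigl(\mu^a,\,(\sigma^a)^2/N_a(T)\bigr)$. Hence for any $a\neq a^*$, the difference $\widehat{\mu}^{a^*}_T-\widehat{\mu}^a_T$ is Gaussian with mean $\Delta^a = \mu^{a^*}-\mu^a$ and variance $V_T^{a^*,a} \coloneqq (\sigma^{a^*})^2/N_{a^*}(T)+(\sigma^a)^2/N_a(T)$, and the standard sub-Gaussian tail bound yields
\begin{equation*}
\mathbb{P}_{P_0}\bigl(\widehat{\mu}^a_T\geq \widehat{\mu}^{a^*}_T\bigr)\;\leq\;\exp\!\left(-\frac{(\Delta^a)^2}{2 V_T^{a^*,a}}\right).
\end{equation*}

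Second, I would take $-\frac{1}{T}\log$ of the union-bound inequality. Because the union has only $K-1$ terms, the prefactor $\log(K-1)$ contributes $o(1)$ after dividing by $T$, so the exponent is dominated by the worst pair:
\begin{equation*}
-\tfrac{1}{T}\log \mathbb{P}_{P_0}(\widehat{a}^{\mathrm{EBA}}_T\neq a^*)\;\geq\;\min_{a\neq a^*}\frac{(\Delta^a)^2}{2\, T\, V_T^{a^*,a}}\;-\;\tfrac{\log(K-1)}{T}.
\end{equation*}
By the ceiling identity $N_a(T)/T\to w^{\mathrm{GNA}}(a)\in(0,1)$, one has $T\cdot V_T^{a^*,a}\to \Omega^{a^*,a}(w^{\mathrm{GNA}})$ as $T\to\infty$. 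Combined with the constraint $\Delta^a\geq \underline{\Delta}$ from the definition of $\mathcal{P}^{\mathrm{G}}(a^*,\underline{\Delta},\overline{\Delta})$, passing to $\liminf_{T\to\infty}$ gives
\begin{equation*}
\liminf_{T\to\infty}-\tfrac{1}{T}\log \mathbb{P}_{P_0}(\widehat{a}^{\mathrm{EBA}}_T\neq a^*)\;\geq\;\min_{a\neq a^*}\frac{\underline{\Delta}^2}{2\,\Omega^{a^*,a}(w^{\mathrm{GNA}})},
\end{equation*}
which matches the statement (interpreted with the implicit minimum over $a\in[K]\setminus\{a^*\}$).

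There is no real obstacle here: the argument is essentially a clean Gaussian concentration combined with Laplace's principle that the slowest-decaying term dominates a finite sum. The only minor care points are (i) ensuring the ceiling function does not perturb the asymptotic allocation (handled by $\lceil w T\rceil/T = w + O(1/T)$), and (ii) confirming that $w^{\mathrm{GNA}}(a)>0$ for all $a$ so that the variance of each $\widehat{\mu}^a_T$ is well-controlled and $\Omega^{a^*,a}(w^{\mathrm{GNA}})<\infty$ — this follows from $\mathcal{W}$ being the open simplex and the max-min structure defining $w^{\mathrm{GNA}}$. The matching of this upper bound with the lower bound of Theorem~\ref{thm:semipara_bandit_lower_bound_lsmodel_asymp_inv_equiv2} then follows directly from the definition of $w^{\mathrm{GNA}}$ as the max-min allocation, up to the gap $\overline{\Delta}^2-\underline{\Delta}^2$ which vanishes as $\overline{\Delta}-\underline{\Delta}\to 0$.
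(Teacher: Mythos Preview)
Your proof is correct and follows the same overall skeleton as the paper: reduce to pairwise events via a union bound, bound each pairwise probability exponentially, and pass to the limit using $N_a(T)/T \to w^{\mathrm{GNA}}(a)$. The difference is in how the pairwise bound is obtained. The paper writes the event in terms of centered summands $\Psi^{a^*}_t,\Psi^{a}_t$, applies the Chernoff bound, and computes the cumulant generating function via a Taylor expansion of the MGF before optimizing over $\lambda$. You instead exploit directly that, because the GNA allocation is deterministic, each $\widehat{\mu}^a_T$ is \emph{exactly} Gaussian and the difference $\widehat{\mu}^{a^*}_T-\widehat{\mu}^a_T$ is $\mathcal{N}(\Delta^a,V_T^{a^*,a})$, so the standard bound $\Phi(-x)\le e^{-x^2/2}$ gives the same exponent in one line. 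The two computations coincide (Chernoff with the optimal $\lambda$ for a Gaussian reproduces the Gaussian tail), but your route is more elementary and avoids the Taylor-series machinery; the paper's MGF route would generalize more readily to merely sub-Gaussian outcomes. One minor caveat: the actual sample count for arm $a$ is $\lceil \sum_{b\le a} w^{\mathrm{GNA}}(b)T\rceil-\lceil \sum_{b<a} w^{\mathrm{GNA}}(b)T\rceil$ rather than $\lceil w^{\mathrm{GNA}}(a)T\rceil$, but this still satisfies $N_a(T)/T\to w^{\mathrm{GNA}}(a)$, so your limit step is unaffected.
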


Then, the worst-case upper bound is given as the following theorem.
\begin{theorem}[Worst-case upper bound of the GNA-EBA strategy]\label{cor:asymp_opt}
For any $0 < \underline{\Delta} \leq \overline{\Delta} < \infty$, the GNA-EBA strategy satisfies
\begin{align*}
    &\inf_{P\in\cup_{a^*\in[K]}\mathcal{P}^{\mathrm{G}}(a^*, \underline{\Delta}, \overline{\Delta})}\liminf_{T \to \infty} - \frac{1}{T}\log \mathbb{P}_{P}\left(\widehat{a}^{\mathrm{EBA}}_T \neq a^*\right)\geq \mathrm{UpperBound}(\underline{\Delta}) \coloneqq \min_{a^*\in[K]}\frac{\underline{\Delta}^2}{2\Omega^{a^*, a}(w^{\mathrm{GNA}})} = \max_{w\in\mathcal{W}}\min_{a^*\in[K]}\frac{\underline{\Delta}^2}{2\Omega^{a^*, a}(w)}.
\end{align*} 
\end{theorem}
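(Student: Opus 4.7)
The plan is to lift the instance-wise upper bound of Theorem~\ref{thm:upper_pom} to the worst case, and then rewrite the resulting constant using the fact that $w^{\mathrm{GNA}}$ is by construction the outer optimizer of the relevant max--min problem. There is essentially no new concentration analysis; all of that is already packaged inside Theorem~\ref{thm:upper_pom}.

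First I would fix any $a^*\in[K]$ and $P\in\mathcal{P}^{\mathrm{G}}(a^*,\underline{\Delta},\overline{\Delta})$ and apply Theorem~\ref{thm:upper_pom}. Reading the unquantified alternative $a$ in that statement as the binding (worst) alternative arm (consistent with the union-bound structure of the misidentification event $\{\widehat{a}^{\mathrm{EBA}}_T\neq a^*\} = \bigcup_{a\neq a^*}\{\widehat{a}^{\mathrm{EBA}}_T=a\}$, and with the $(a^*,a)$-minimization that defines $w^{\mathrm{GNA}}$), the resulting bound is
\[
\liminf_{T\to\infty}-\frac{1}{T}\log\mathbb{P}_{P}\!\left(\widehat{a}^{\mathrm{EBA}}_T\neq a^*\right)\;\geq\;\min_{a\in[K]\setminus\{a^*\}}\frac{\underline{\Delta}^2}{2\Omega^{a^*,a}(w^{\mathrm{GNA}})}.
\]
The key observation is that the right-hand side is independent of the specific instance $P$: it depends only on the label $a^*$ and on the (fixed) variances parametrizing the Gaussian bandit model.

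Second, since this bound is uniform over $P\in\mathcal{P}^{\mathrm{G}}(a^*,\underline{\Delta},\overline{\Delta})$, taking the infimum over such $P$ leaves the right-hand side unchanged, and then taking the infimum over $a^*\in[K]$ yields
\[
\inf_{P\in\bigcup_{a^*\in[K]}\mathcal{P}^{\mathrm{G}}(a^*,\underline{\Delta},\overline{\Delta})}\liminf_{T\to\infty}-\frac{1}{T}\log\mathbb{P}_{P}\!\left(\widehat{a}^{\mathrm{EBA}}_T\neq a^*\right)\;\geq\;\min_{a^*\in[K]}\min_{a\in[K]\setminus\{a^*\}}\frac{\underline{\Delta}^2}{2\Omega^{a^*,a}(w^{\mathrm{GNA}})}.
\]
This exchange is immediate because we are only moving a constant (in $T$) outside a $\liminf$.

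Third, I would match this quantity with the max--min form in the statement. By the very definition
\[
w^{\mathrm{GNA}}=\argmax_{w\in\mathcal{W}}\min_{a^*\in[K],\,a\in[K]\setminus\{a^*\}}\frac{1}{2\Omega^{a^*,a}(w)},
\]
we have $\min_{a^*,a}\frac{1}{2\Omega^{a^*,a}(w^{\mathrm{GNA}})}=\max_{w\in\mathcal{W}}\min_{a^*,a}\frac{1}{2\Omega^{a^*,a}(w)}$, and multiplying through by $\underline{\Delta}^2$ gives the claimed equality
\[
\min_{a^*\in[K]}\frac{\underline{\Delta}^2}{2\Omega^{a^*,a}(w^{\mathrm{GNA}})}\;=\;\max_{w\in\mathcal{W}}\min_{a^*\in[K]}\frac{\underline{\Delta}^2}{2\Omega^{a^*,a}(w)}.
\]

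The main obstacle is really only a bookkeeping one: verifying that the unquantified alternative arm $a$ in Theorem~\ref{thm:upper_pom} is being interpreted as a minimum over $a\in[K]\setminus\{a^*\}$, so that the chain of inequalities is tight and the worst-case infimum coincides with the max--min value attained at $w^{\mathrm{GNA}}$. Once that convention is fixed, the result is a direct corollary of Theorem~\ref{thm:upper_pom} and the defining property of $w^{\mathrm{GNA}}$.
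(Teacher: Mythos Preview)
Your proposal is correct and follows exactly the paper's approach: the paper's proof is the single sentence ``By taking $\min_{a^*\in[K]}\inf_{P\in\mathcal{P}^{\mathrm{G}}(a^*, \underline{\Delta}, \overline{\Delta})}$ in both LHS and RHS in the upper bound of Theorem~\ref{thm:upper_pom}, we obtain the statement,'' which is precisely your first two steps. Your additional remarks---that the unquantified $a$ in Theorem~\ref{thm:upper_pom} should be read as $\min_{a\neq a^*}$ (confirmed by the last display in the proof in Appendix~\ref{appdx:ts_eba}), and that the final equality follows from the defining property of $w^{\mathrm{GNA}}$---are valid clarifications the paper leaves implicit.
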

\begin{proof}
    By taking $\min_{a^*\in[K]}\inf_{P\in\mathcal{P}^{\mathrm{G}}(a^*, \underline{\Delta}, \overline{\Delta})}$ in both LHS and RHS in the upper bound of Theorem~\ref{thm:upper_pom}, we obtain the statement.
\end{proof}

\subsection{Worst-case Asymptotic Optimality}
\begin{wrapfigure}[14]{r}[0mm]{90mm}
  \centering
  \vspace{-5mm}
\includegraphics[scale=0.30]{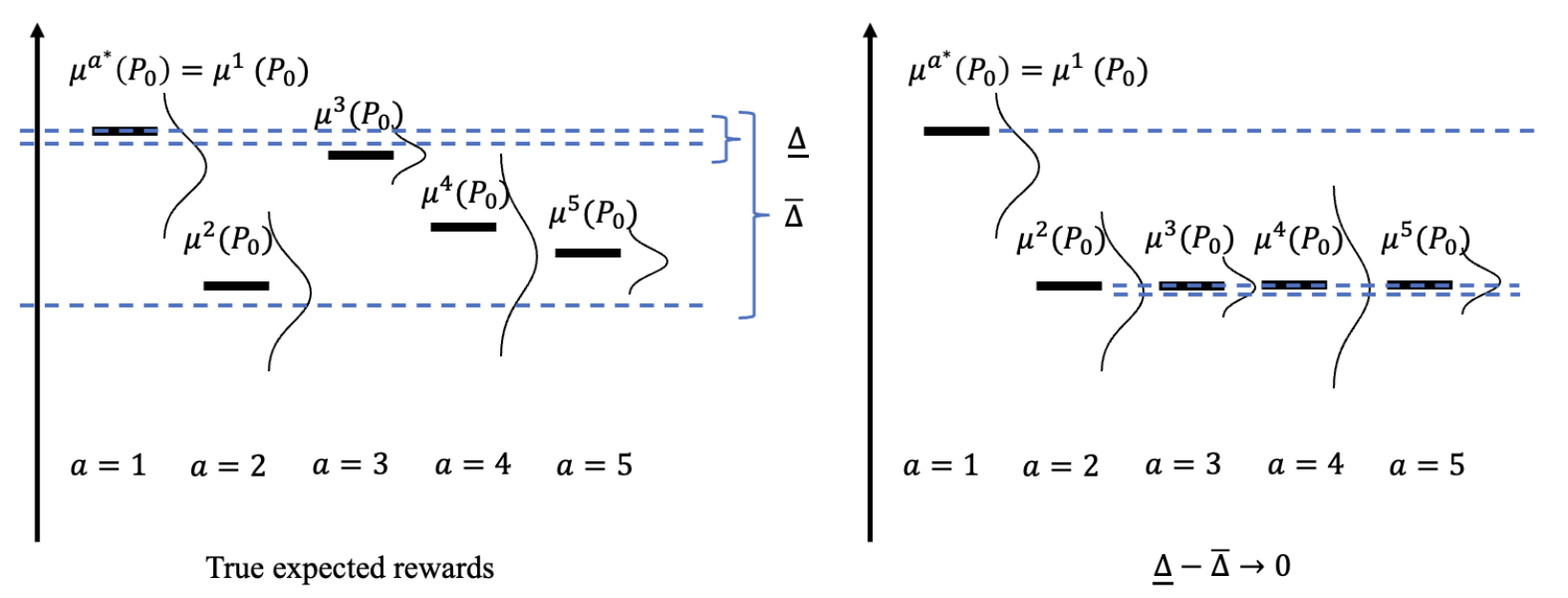}
\caption{An idea in the derivation of the lower bounds. To lower bound the probability of misidentification (upper bound $ - \frac{1}{T}\log\mathbb{P}_{ P }(\widehat{a}^\pi_T \neq a^*(P))$) it is sufficient to consider a case in the right figure.}
\label{fig:concept:smallgap}
\end{wrapfigure}
By comparing the lower bound in Theorem~\ref{thm:semipara_bandit_lower_bound_lsmodel_asymp_inv_equiv2} and the upper bound in Theorem~\ref{cor:asymp_opt}, the following theorem shows that they approach as $\underline{\Delta} - \overline{\Delta} \to 0$ and match when $\underline{\Delta} = \overline{\Delta}$. This case implies a situation in which the difference between the expected outcome of the best arm and the next best arm is equal to the difference between the expected outcome of the best arm and the worst arm. In such a situation, it is possible to construct an optimal algorithm even if the expected outcomes are unknown. This concept is illustrated in Figure~\ref{fig:concept:smallgap}.

\begin{theorem}
As $\underline{\Delta} - \overline{\Delta} \to 0$, we have
    \begin{align*}
        \mathrm{LowerBound}(\overline{\Delta}) - \mathrm{UpperBound}(\underline{\Delta}) \to 0.
    \end{align*}
Additionally, for any $0< \Delta < \infty$ and for $\overline{\mathcal{P}}^{\mathrm{G}}(\Delta) \coloneqq \Big\{P \in \mathcal{P}^{\mathrm{G}}(a^*, \underline{\Delta}, \overline{\Delta})\mid a^*\in [K],\ \ \underline{\Delta} = \overline{\Delta} = \Delta\Big\}$, we have
\begin{align*}
    &\sup_{\pi \in \Pi^{\mathrm{cons}}}\inf_{P \in\overline{\mathcal{P}}^{\mathrm{G}}(\Delta)}\liminf_{T \to \infty} - \frac{1}{T}\log \mathbb{P}_{P}\left(\widehat{a}^{\mathrm{EBA}}_T \neq a^*(P)\right)\leq \max_{w\in\mathcal{W}}\min_{a^*\in[K]}\frac{\Delta^2}{2\Omega^{a^*, a}(w)}\\
    &= \max_{w\in\mathcal{W}}\inf_{P \in\overline{\mathcal{P}}^{\mathrm{G}}(\Delta)} \Gamma(P, w) \leq \inf_{P \in\overline{\mathcal{P}}^{\mathrm{G}}(\Delta)}\liminf_{T \to \infty} - \frac{1}{T}\log \mathbb{P}_{P}\left(\widehat{a}^{\mathrm{EBA}}_T \neq a^*(P)\right).
\end{align*} 
\end{theorem}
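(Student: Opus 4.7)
My plan decomposes the theorem into two independent parts: a continuity-style argument for the convergence claim, and a four-term chain of bounds for the sandwich result. In both parts the key structural observation is that $\overline{\Delta}$ and $\underline{\Delta}$ enter the bounds only as multiplicative scalars, while the inner extremum over $w$ and $(a^*, a)$ depends only on the (fixed) variances and on $K$.

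For the first claim, I would introduce the variance-only constant
\begin{align*}
C \;\coloneqq\; \max_{w \in \mathcal{W}}\, \min_{a^* \in [K],\, a \in [K] \setminus \{a^*\}} \frac{1}{2\,\Omega^{a^*, a}(w)},
\end{align*}
which is finite and strictly positive since the bounds $\underline{C} \le (\sigma^a)^2 \le \overline{C}$ guarantee that an interior choice of $w$ yields a positive value while no individual term $1/(2\Omega^{a^*, a}(w))$ blows up. Then $\mathrm{LowerBound}(\overline{\Delta}) = C\,\overline{\Delta}^2$ and $\mathrm{UpperBound}(\underline{\Delta}) = C\,\underline{\Delta}^2$, so their difference factors as $C\,(\overline{\Delta} + \underline{\Delta})\,(\overline{\Delta} - \underline{\Delta})$, which vanishes as $\overline{\Delta} - \underline{\Delta} \to 0$ because $\overline{\Delta}$ is finite.

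For the sandwich, I set $\underline{\Delta} = \overline{\Delta} = \Delta$; the constraint $\Delta \le \mu^{a^*} - \mu^a \le \Delta$ then forces $\mu^{a^*} - \mu^a = \Delta$ for every suboptimal $a$, and $\overline{\mathcal{P}}^{\mathrm{G}}(\Delta) = \bigcup_{a^* \in [K]} \mathcal{P}^{\mathrm{G}}(a^*, \Delta, \Delta)$. The leftmost inequality follows from Theorem~\ref{thm:semipara_bandit_lower_bound_lsmodel_asymp_inv_equiv2}: since $\liminf \le \limsup$, the stated upper bound on $\inf_P \limsup$ also bounds $\inf_P \liminf$; because the right-hand side does not depend on $\pi$, taking $\sup_{\pi \in \Pi^{\mathrm{cons}}}$ on the left preserves it, using the extension to all consistent strategies announced in Section~\ref{sec:relax_asmpopt}. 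The middle equality reduces to a closed-form evaluation of $\Gamma^*(P, w)$ for Gaussian rewards: minimizing $w(a^*)\,\widetilde{\mathrm{KL}}(v^a, \mu^{a^*}) + w(a)\,\widetilde{\mathrm{KL}}(v^a, \mu^a)$ in $v^a$ is a one-dimensional least-squares problem whose solution is a precision-weighted convex combination of $\mu^{a^*}$ and $\mu^a$, and plugging back yields $(\mu^{a^*} - \mu^a)^2/(2\,\Omega^{a^*, a}(w))$. On $\overline{\mathcal{P}}^{\mathrm{G}}(\Delta)$ this equals $\Delta^2/(2\,\Omega^{a^*, a}(w))$; since variances are fixed parameters and only the identity of $a^*$ together with a global shift of the means can vary, $\inf_{P \in \overline{\mathcal{P}}^{\mathrm{G}}(\Delta)} \Gamma^*(P, w) = \min_{a^*, a \neq a^*} \Delta^2/(2\,\Omega^{a^*, a}(w))$, which is the claimed middle equality after taking $\max_w$. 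The rightmost inequality is Theorem~\ref{cor:asymp_opt} applied with $\underline{\Delta} = \Delta$.

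The main obstacle I anticipate is not a hard computation but the bookkeeping around the strategy-class extension: Theorem~\ref{thm:semipara_bandit_lower_bound_lsmodel_asymp_inv_equiv2} is stated only for $\Pi^{\mathrm{cons}} \cap \Pi^{\mathrm{inv}}$, whereas the sandwich sweeps $\sup_\pi$ over all of $\Pi^{\mathrm{cons}}$. I would invoke the claim in Section~\ref{sec:relax_asmpopt} that, once the worst case is taken over both the identity of $a^*$ and the admissible means, the worst-case constant is unchanged --- intuitively because the worst-case alternative removes the instance-dependent advantage that adaptivity could otherwise exploit. A secondary, purely notational issue is that the paper's shorthand $\min_{a^* \in [K]}$ inside each bound carries an implicit $\min$ over $a \in [K] \setminus \{a^*\}$, and this implicit index must be tracked consistently for the middle equality to line up cleanly.
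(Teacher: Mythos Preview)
Your proposal is correct and follows essentially the same route as the paper: both invoke Theorem~\ref{thm:semipara_bandit_lower_bound_lsmodel_asymp_inv_equiv2} for the left inequality, Theorem~\ref{cor:asymp_opt} for the right inequality, and observe that the two bounds coincide when $\underline{\Delta}=\overline{\Delta}$ because the $\Delta$-dependence factors out. Your treatment is in fact more explicit than the paper's in two places---the factorization $C(\overline{\Delta}+\underline{\Delta})(\overline{\Delta}-\underline{\Delta})$ for the convergence claim, and the direct Gaussian computation of $\Gamma^*(P,w)=\min_{a\neq a^*}\Delta^2/(2\Omega^{a^*,a}(w))$ for the middle equality---whereas the paper simply says these follow ``from the definitions'' and refers to Section~\ref{sec:best_agnostic}; your concern about the $\Pi^{\mathrm{cons}}$ versus $\Pi^{\mathrm{cons}}\cap\Pi^{\mathrm{inv}}$ discrepancy is exactly what the paper handles via Section~\ref{sec:relax_asmpopt} and Appendix~\ref{appdx:equiv2}.
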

\begin{proof} From Theorem~\ref{thm:semipara_bandit_lower_bound_lsmodel_asymp_inv_equiv2}, we have $\inf_{P\in\cup_{a^*\in[K]}\mathcal{P}^{\mathrm{G}}(a^*, \underline{\Delta}, \overline{\Delta})}\limsup_{T \to \infty} -\frac{1}{T}\log \mathbb{P}_{P}( \widehat{a}^\pi_T \neq a^*)\leq \mathrm{LowerBound}(\overline{\Delta})$. From Theorem~\ref{cor:asymp_opt}, we have $\inf_{P\in\cup_{a^*\in[K]}\mathcal{P}^{\mathrm{G}}(a^*, \underline{\Delta}, \overline{\Delta})}\liminf_{T \to \infty} - \frac{1}{T}\log \mathbb{P}_{P}\left(\widehat{a}^{\mathrm{EBA}}_T \neq a^*\right)\geq \mathrm{UpperBound}(\underline{\Delta})$. 

From the definitions of $\mathrm{LowerBound}(\overline{\Delta})$ and $\mathrm{UpperBound}(\underline{\Delta})$, we directly have $\mathrm{LowerBound}(\overline{\Delta}) - \mathrm{UpperBound}(\underline{\Delta}) \to 0$ as $\overline{\Delta} - \underline{\Delta} \to 0$. 

Furthermore, if $\overline{\Delta} = \underline{\Delta} = \Delta$ holds, then $\mathrm{LowerBound}(\overline{\Delta}) = \mathrm{UpperBound}(\underline{\Delta}) =\max_{w\in\mathcal{W}}\min_{a^*\in[K]}\frac{\Delta^2}{2\Omega^{a^*, a}(w)}$ holds. Note that $\max_{w\in\mathcal{W}}\min_{a^*\in[K]}\frac{\Delta^2}{2\Omega^{a^*, a}(w)} = \inf_{P \in\overline{\mathcal{P}}^{\mathrm{G}}(\Delta)}  \max_{w\in\mathcal{W}}, \Gamma(P, w)$ holds (see Section~\ref{sec:best_agnostic}). Thus, the proof completes. 
\end{proof}

This result implies that \textbf{$\inf_{P\in\overline{\mathcal{P}}^{\mathrm{G}}(\Delta)}\max_{w\in\mathcal{W}}\Gamma^*(P, w)$ is lower and upper bounds for any consistent strategies ($\mathrm{LowerBound}(\overline{\Delta}) = \mathrm{UpperBound}(\underline{\Delta}) = \inf_{P\in\overline{\mathcal{P}}^{\mathrm{G}}(a^*, \Delta)}\max_{w\in\mathcal{W}}\Gamma^*(P, w)$), although there exists $P \in \mathcal{P}^{\mathrm{G}}_{\sigma=1}$ such that a lower bound is larger than $\max_{w\in\mathcal{W}}\Gamma^*(P, w)$ \citep{degenne2023existence}.} Thus, without contradicting the existing results by \citet{Ariu2021} and \citet{degenne2023existence}, we show the existence of an optimal strategy for a lower bound conjectured by \citet{Kaufman2016complexity} and \citet{Garivier2016}. We illustrate this result in Figure~\ref{fig:dst}. Our findings do not present a contradiction to the results by \citet{Ariu2021} and \citet{degenne2023existence} because we restrict the bandit models.

\begin{figure*}[h]
    \centering
        \includegraphics[width=70mm]{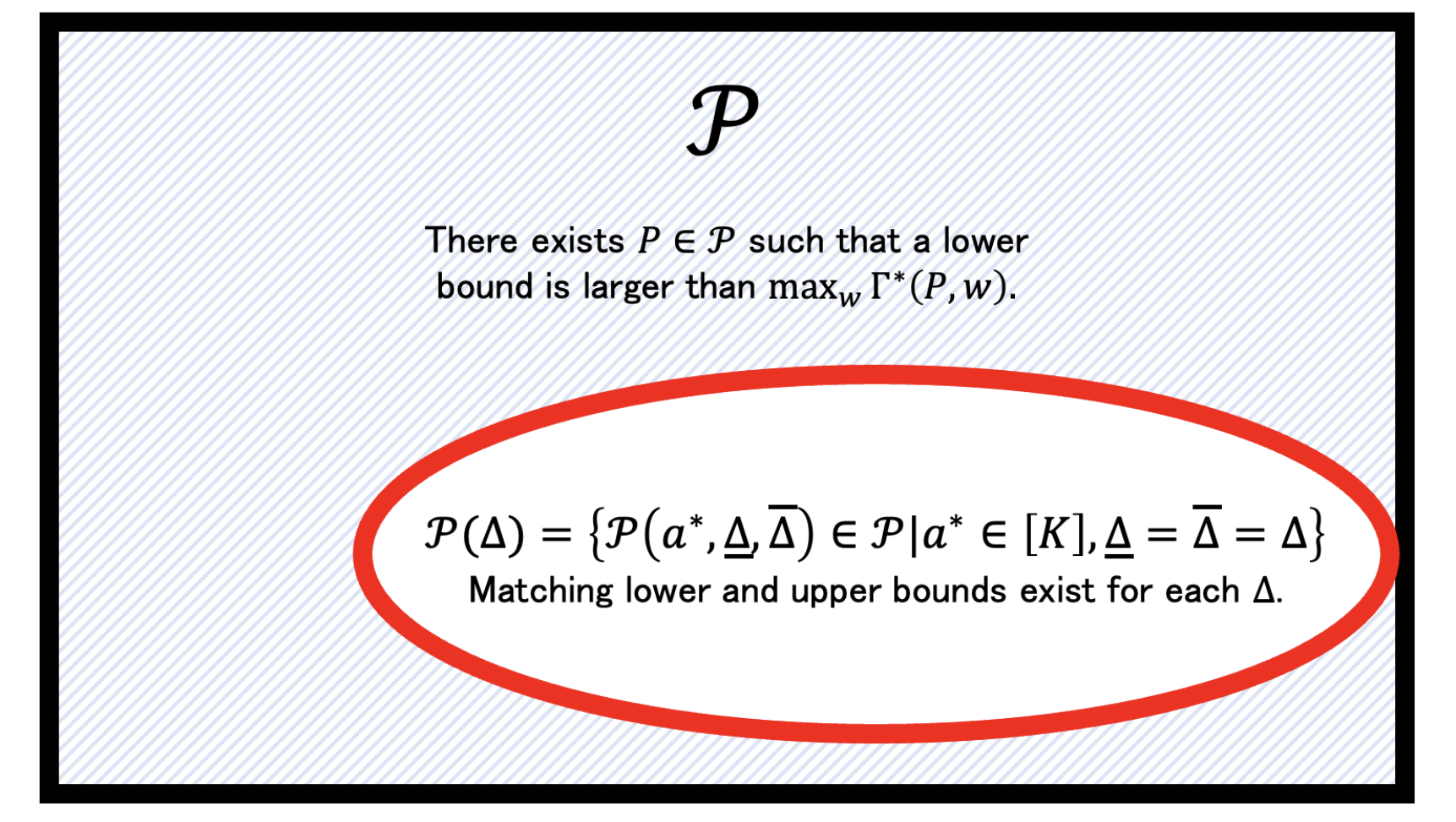}
        \vspace{-0.4cm}
        \caption{The region where there exists matching lower and upper bounds.}
\label{fig:dst}
\vspace{-0.3cm}
\end{figure*}

Additionally, when $\underline{\Delta} = \overline{\Delta}$ and the variances are known, we design an asymptotically worst-case optimal strategy, which is \emph{not} based on the uniform allocation. 

Note that our optimality criterion is closely related to local optimality that has been used in various studies, including the bandit problem and statistical testing. For example, the optimality under a small-gap setting such that $\mu^{a^*(P)}(P) - \mu^a(P) \to 0$ for all $a\in [K]\backslash\{a^*\}$ is a particular case of our optimality. Such a small-gap optimality has been discussed in \citet{Jamieson2014} and \citet{Shin2018}, where the former is a study about fixed-confidence BAI, and the latter is about fixed-budget BAI. The local Bahadur efficiency is also a closely related optimality criterion, which has been used in the literature of statistical testing \citep{Wieand1976,Akritas1988187,He1996}. In many cases, such a location has been employed to discuss the optimality of the main performance measure while ignoring estimation errors of other parameters.

From the viewpoints of open questions, we answer them as follows:
\begin{itemize}
    \item For Open questions~\ref{open_impossible} and \ref{open_asymp_inv}: we prove that there exist matching lower and upper bounds under the worst case metric ($\inf_{P\in\cup_{a^*\in[K]}\mathcal{P}^{\mathrm{G}}(a^*, \underline{\Delta}, \overline{\Delta})}\limsup_{T \to \infty} -\frac{1}{T}\log \mathbb{P}_{P}( \widehat{a}^\pi_T \neq a^*(P))$) and the restricted bandit models ($\overline{\Delta} = \underline{\Delta}$). Additionally, even if we restrict the class to asymptotically invariant strategies, the lower bound cannot be improved.
    \item For Open question~\ref{open_uniform}: we prove that a variance-based strategy performs better than the uniform allocation in Gaussian bandit models.
\end{itemize}

As a side-product of our study, we propose a novel setting called hypothesis BAI (HBAI) in Section~\ref{sec:hbai}. The results shown above consider a lower bound that is valid for any choices of the mean parameters and the best arm, reflecting a situation where they are unknown. However, we can consider a situation where there is a conjecture of the best arm prior to an experiment. For example, as well as hypothesis testings, we can set null and alternative hypotheses such that a conjectured best arm $\widetilde{a} \in [K]$ is truly best (alternative) or not (null). Under this setting, if we set $\widetilde{a}$ as a proxy of $a^*$ and conduct an experiment, then the probability of minimization is minimized when $\widetilde{a} = a^*$ (the alternative hypothesis is true). As an analogy of hypothesis testing, we call this setting HBAI. 

Results of simulation studies are shown in Section~\ref{sec:exp_results}. We discuss related work in Appendix~\ref{sec:related}. In Appendix~\ref{sec:information_loss}, we explain intuitive ideas about the lower bounds, which include the preliminary of the proof of Theorem~\ref{thm:semipara_bandit_lower_bound_lsmodel_asymp_inv_equiv2}.

\subsection{Discussion}
Finally, we discuss our results in comparison to existing results. This section aims to clarify our study's contribution by showing how we approach the open question raised in existing studies.
\paragraph{Comparison with \citet{Komiyama2022} and
\citet{degenne2023existence}.}
First, we compare our results with those in \citet{Komiyama2022} and
\citet{degenne2023existence}. While \citet{Komiyama2022} and
\citet{degenne2023existence} measure a performance by $\inf_{P\in\mathcal{P})}H(P)\limsup_{T \to \infty} -\frac{1}{T}\log \mathbb{P}_{P}( \widehat{a}^\pi_T \neq a^*)$, we measure it by $\inf_{P\in\mathcal{P})}\limsup_{T \to \infty} -\frac{1}{T}\log \mathbb{P}_{P}( \widehat{a}^\pi_T \neq a^*)$, where the main difference is the existence of the problem difficulty term $H(P)$. \citet{degenne2023existence} shows a lower bound $\inf_{P\in\mathcal{P}}H(P)\liminf_{T\to \infty}-\frac{1}{T}\log\mathbb{P}_{P}( \widehat{a}_T \neq a^*) \leq \max_{w\in\mathcal{W}}\Lambda^*(P, w, H)$, which is tighter than the lower bound in \citet{Komiyama2022}. We compare this lower bound to ours. Let us consider a case $H(P)$ is constant independent of $P$ and $T$; that is, $H(P) = \overline{H}$ holds for a constant $\overline{H}$ independent of $P$ and $T$. Then, from \citet{degenne2023existence}'s lower bound, we have $\inf_{P\in\mathcal{P}}\liminf_{T\to \infty}-\frac{1}{T}\log\mathbb{P}_{P}( \widehat{a}_T \neq a^*) \leq \inf_{\mu^{a} < v^a < \mu^{a^*(P)}}\sum_{a\in[K]}w(a)\widetilde{\mathrm{KL}}\big(v^a, \mu^{a}(P)\big) \leq \mathrm{LowerBound}(\overline{\Delta})$, which aligns with our lower bound. This difference comes from how we interpret the difficulty of the problem of interest.  
\citet{degenne2023existence} considers a lower bound under $\mathcal{P}$, instead of introducing some difficulty term $H(P)$. In contrast, we consider a lower bound that matches the upper bound when $\overline{\Delta} - \underline{\Delta} \to 0$, which implies that we restrict the class of  $\mathcal{P}$ when considering the asymptotic optimality. 

\paragraph{Asymptotically invariant Strategy.} In fixed-budget BAI, the strategy proposed by \citet{glynn2004large} has been considered a candidate for asymptotically optimal strategies. We and \citet{degenne2023existence} show that it is actually an asymptotically optimal strategy among a class of consistent and asymptotically invariant (static) strategies. \citet{degenne2023existence} proves that there exists $P\in\mathcal{P}$ under which a lower bound for consistent strategies does not match that for consistent and asymptotically invariant (static) strategies. In contrast, in the following corollary, we show that under the worst-case analysis, the lower bound for consistent strategy is the same as that for consistent and asymptotically invariant (static) strategies, and the upper bound of the GNA-EBA strategy matches the lower bound. This result implies that (i) we cannot obtain a strategy with a smaller probability of misidentification even if relaxing the class of strategies, and (ii) there exists an asymptotically optimal strategy, at least if we know variances. This result challenges the conclusion of \citet{degenne2023existence}, which states that there exists \emph{no} optimal strategy in the sense that its upper bound aligns with that of asymptotically invariant strategies. Our result does not contradict to his result because we restrict the bandit models $\mathcal{P}$, rather than the strategy class.
\begin{corollary}
\label{cor:asymp_cons_equiv}
    For any $0 < \Delta < \infty$ any $\mathcal{P}^{\mathrm{G}}(a^*, \Delta, \Delta$, any consistent strategies have a lower bound $\mathrm{LowerBound}(\overline{\Delta})$ same as any consistent and asymptotically invariant strategies, which aligns with an upper bound $\mathrm{UpperBound}(\underline{\Delta})$ of the GNA-EBA strategy.
\end{corollary}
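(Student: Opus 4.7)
The plan is to assemble the corollary from three pieces already established above: the lower bound of Theorem~\ref{thm:semipara_bandit_lower_bound_lsmodel_asymp_inv_equiv2}, the upper bound of Theorem~\ref{cor:asymp_opt}, and the first displayed inequality of the unnumbered theorem immediately preceding the corollary. Specializing to $\overline{\Delta}=\underline{\Delta}=\Delta$ collapses both $\mathrm{LowerBound}(\overline{\Delta})$ and $\mathrm{UpperBound}(\underline{\Delta})$ to the common value
\[
V^{*}(\Delta)\;\coloneqq\;\max_{w\in\mathcal{W}}\min_{a^{*}\in[K]}\frac{\Delta^{2}}{2\,\Omega^{a^{*},a}(w)},
\]
so the whole task reduces to showing (i) that the worst-case complexity is upper bounded by $V^{*}(\Delta)$ both for every $\pi\in\Pi^{\mathrm{cons}}$ and for every $\pi\in\Pi^{\mathrm{cons}}\cap\Pi^{\mathrm{inv}}$ over $\overline{\mathcal{P}}^{\mathrm{G}}(\Delta)$, and (ii) that GNA-EBA saturates $V^{*}(\Delta)$ from below.

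For (i), Theorem~\ref{thm:semipara_bandit_lower_bound_lsmodel_asymp_inv_equiv2} with $\overline{\Delta}=\Delta$ directly supplies the $V^{*}(\Delta)$ bound for the narrower class $\Pi^{\mathrm{cons}}\cap\Pi^{\mathrm{inv}}$, while the first inequality of the preceding unnumbered theorem, $\sup_{\pi\in\Pi^{\mathrm{cons}}}\inf_{P\in\overline{\mathcal{P}}^{\mathrm{G}}(\Delta)}\liminf_{T\to\infty}-\tfrac{1}{T}\log\mathbb{P}_{P}(\widehat{a}^{\pi}_{T}\neq a^{*}(P))\leq V^{*}(\Delta)$, extends the same bound to the wider class $\Pi^{\mathrm{cons}}$. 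For (ii), I would verify that GNA-EBA lies in $\Pi^{\mathrm{cons}}\cap\Pi^{\mathrm{inv}}$: asymptotic invariance is immediate because $w^{\mathrm{GNA}}$ is a deterministic function of the fixed and known variances $(\sigma^{a})^{2}$ alone, so $\kappa^{\pi}_{T,P_{0}}(a)=w^{\mathrm{GNA}}(a)+O(1/T)$ for every $P_{0}\in\overline{\mathcal{P}}^{\mathrm{G}}(\Delta)$ by construction of the rule. Consistency follows from Theorem~\ref{thm:upper_pom} because, with $\Delta>0$, the exponential rate $\Delta^{2}/(2\,\Omega^{a^{*},a}(w^{\mathrm{GNA}}))$ is strictly positive, forcing $\mathbb{P}_{P_{0}}(\widehat{a}^{\mathrm{EBA}}_{T}\neq a^{*})\to 0$ under every $P_{0}$. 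Theorem~\ref{cor:asymp_opt} specialized to $\underline{\Delta}=\Delta$ then delivers the matching lower achievement $\inf_{P\in\overline{\mathcal{P}}^{\mathrm{G}}(\Delta)}\liminf_{T\to\infty}-\tfrac{1}{T}\log\mathbb{P}_{P}(\widehat{a}^{\mathrm{EBA}}_{T}\neq a^{*})\geq V^{*}(\Delta)$.

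Pinching the two class-wise suprema against this achievement forces both of them to equal $V^{*}(\Delta)$, which is precisely the content of the corollary. No single step is genuinely difficult here; the heavy machinery (the transportation-style change-of-measure behind Theorem~\ref{thm:semipara_bandit_lower_bound_lsmodel_asymp_inv_equiv2} and the Gaussian concentration behind Theorem~\ref{thm:upper_pom}) has already been carried out in the cited results. The only mildly subtle point is the jump from a lower bound on $\Pi^{\mathrm{cons}}\cap\Pi^{\mathrm{inv}}$ to the same bound on the strictly larger class $\Pi^{\mathrm{cons}}$; this is obtained not by a fresh change-of-measure argument but for free from the sandwich, i.e., from the fact that a strategy in the intersection (namely GNA-EBA) already attains $V^{*}(\Delta)$, which, combined with the preceding theorem's $\sup_{\pi\in\Pi^{\mathrm{cons}}}$ inequality, prevents any $\pi\in\Pi^{\mathrm{cons}}$ from exceeding $V^{*}(\Delta)$.
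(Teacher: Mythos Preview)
Your proposal is correct and follows essentially the same assembly-of-pieces approach as the paper. The paper (Section~\ref{sec:relax_asmpopt}) obtains the corollary directly from Theorems~\ref{thm:semipara_bandit_lower_bound_lsmodel_asymp_inv_equiv}, \ref{thm:semipara_bandit_lower_bound_lsmodel_asymp_inv_equiv2}, and \ref{cor:asymp_opt}, reading Theorem~\ref{thm:semipara_bandit_lower_bound_lsmodel_asymp_inv_equiv2} as already valid for all of $\Pi^{\mathrm{cons}}$ (cf.\ the discussion in Section~\ref{sec:relax_asmpopt} and the proof in Appendix~\ref{appdx:equiv2}, which does not invoke asymptotic invariance); you instead take the literal class stated in Theorem~\ref{thm:semipara_bandit_lower_bound_lsmodel_asymp_inv_equiv2} and pull the $\Pi^{\mathrm{cons}}$ inequality from the unnumbered theorem, which is an equivalent route. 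Your explicit verification that GNA-EBA lies in $\Pi^{\mathrm{cons}}\cap\Pi^{\mathrm{inv}}$ is a useful addition the paper leaves implicit.
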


\paragraph{Matching lower and upper bounds.} Instead of considering restricted bandit models ($\overline{\Delta} - \underline{\Delta} \to 0$), we develop tight lower and upper bounds that match as $\overline{\Delta} - \underline{\Delta} \to 0$. \citet{degenne2023existence} considers a lower bound characterized by a problem difficulty terms, where there is a gap from upper bounds. In other words, we derive matching lower and upper bounds for restricted bandit models under $\overline{\Delta} - \underline{\Delta} \to 0$, while \citet{degenne2023existence} derives a lower bound whose leading factor aligns with an upper bound (but it is not a perfect match) for global bandit models $\mathcal{P}$ (there is no restriction such that $\overline{\Delta} - \underline{\Delta} \to 0$). 

\paragraph{Applications of interest.} Thus, there are two frameworks, asymptotic optimality for restricted bandit models and that depending on the problem difficulty. The former is closely related to large-deviation-based asymptotically optimal strategies such as \citet{glynn2004large}, while the latter is closely related to the standard fixed-budget BAI strategies such as the successive halving. The remaining question is which framework is appropriate, and it depends on applications. The asymptotic optimality for restricted bandit models is often considered in epidemiology and economics, where the number of arms is not large, and we need to identify the best arm among the other arms whose expected outcomes are close to but less than that of the best arm. For example, \citet{Laan2008TheCA} and \citet{Hahn2011} consider local models, where parameters have $1/\sqrt{T}$-perturbation \citep{Vaart1998}, and discuss asymptotic optimality in such models. In contrast, the use of optimality based on the problem difficulty is appropriate for recommendation systems and online advertisements, where the number of arms is significantly large. In this case, as well as the successive halving, it is desirable to remove suboptimal arms as early as possible, depending on how fast and accurately we can remove them depending on the problem difficulty.

\section{Intuition behind Lower Bounds and the Proofs of Theorem~\ref{thm:semipara_bandit_lower_bound_lsmodel_asymp_inv_equiv2} and Corollary~\ref{cor:asymp_cons_equiv}}
\label{sec:information_loss}
This section provides intuitive ideas behind Theorem~\ref{thm:semipara_bandit_lower_bound_lsmodel_asymp_inv_equiv2} and Corollary~\ref{cor:asymp_cons_equiv} with preliminaries for their proofs. Our arguments about lower bounds are based on the information-theoretical approach. We reveal that different available information yields different lower bounds. Finally, we develop the worst-case lower bound for multi-armed Gaussian bandits. 

\subsection{Existence of Asymptotically Optimal Strategies}
The existence of asymptotically optimal strategies is a longstanding open question \citep{kaufmann2020hdr,Ariu2021,degenne2023existence}. \citet{Kaufman2016complexity} provide a lower bound and an asymptotically optimal strategy for two-armed Gaussian bandits with known variances. However, when the number of arms is three or more ($K\geq 3$), even the lower bounds remain unknown (see Table~\ref{table:comparison}).

There are multiple reasons why this problem is challenging, and it is difficult to offer a single clear explanation. For instance, factors affecting the upper bounds of strategies include (i) the estimation error of distributional information, (ii) the class of strategies, and (iii) the dependency of optimal sample allocation ratios on the best arm.

To address this problem, we develop novel lower bounds by extending those shown by \citet{Kaufman2016complexity}, focusing on the lack of distributional information.

\subsection{Transportation Lemma}
\label{sec:tansport}
Our arguments about lower bounds start from a (conjectured) lower bound shown by 
\citet{Kaufman2016complexity} and \citet{Garivier2016}.
\begin{lemma}[Lower bound given known distributions]
\label{prp:lowerbound_2arms}
For any $0 < \underline{\Delta} \leq \overline{\Delta} < \infty$, any $a^*\in[K]$, and any $P_0\in\mathcal{P}^{\mathrm{G}}(a^*, \underline{\Delta}, \overline{\Delta})$, 
any consistent (Definition~\ref{def:consistent}) strategy $\pi\in\Pi^{\mathrm{cons}}$ satisfies 
\[
    \limsup_{T \to \infty} - \frac{1}{T}\log \mathbb{P}_{P_0}(\widehat{a}^\pi_T \neq a^*)
    \leq \limsup_{T \to \infty}\inf_{Q\in\cup_{b \neq a^*}\mathcal{P}(b, \underline{\Delta}, \overline{\Delta})}\sum_{a\in[K]} \kappa^\pi_{T, Q}(a)\mathrm{KL}(Q^a, P^a_0).
\]
\end{lemma}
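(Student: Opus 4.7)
The plan is to follow the standard change-of-measure argument used by Kaufmann, Cappé, and Garivier to relate the error probability under $P_0$ to a weighted sum of per-arm KL divergences to an alternative model $Q$ in which the ``wrong'' arm is optimal. Fix an alternative $Q \in \bigcup_{b \neq a^*} \mathcal{P}(b, \underline{\Delta}, \overline{\Delta})$ and consider the event $E_T = \{\widehat{a}_T^\pi = a^*\}$, which lies in $\mathcal{F}_T$. Under consistency of $\pi$, we have $\mathbb{P}_{P_0}(E_T) \to 1$ and, since $a^*(Q) \neq a^*$, also $\mathbb{P}_{Q}(E_T) \to 0$. The key identity is the chain-rule decomposition of the log-likelihood ratio between $\mathbb{P}_Q$ and $\mathbb{P}_{P_0}$ restricted to $\mathcal{F}_T$, which for a bandit strategy (where $A_t$ is $\mathcal{F}_{t-1}$-measurable) yields
\begin{equation*}
\mathrm{KL}(\mathbb{P}_{Q}^{\mathcal{F}_T}, \mathbb{P}_{P_0}^{\mathcal{F}_T}) \;=\; \sum_{a\in[K]} \mathbb{E}_{Q}\!\left[N_T^a\right] \mathrm{KL}(Q^a, P_0^a) \;=\; T \sum_{a\in[K]} \kappa^\pi_{T,Q}(a)\, \mathrm{KL}(Q^a, P_0^a),
\end{equation*}
where $N_T^a = \sum_{t=1}^T \mathbbm{1}[A_t^\pi = a]$.

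Next I would combine this identity with the data-processing inequality applied to the binary random variable $\mathbbm{1}_{E_T}$:
\begin{equation*}
\mathrm{kl}\!\left(\mathbb{P}_{Q}(E_T),\, \mathbb{P}_{P_0}(E_T)\right) \;\leq\; \mathrm{KL}(\mathbb{P}_{Q}^{\mathcal{F}_T}, \mathbb{P}_{P_0}^{\mathcal{F}_T}).
\end{equation*}
To extract $-\log \mathbb{P}_{P_0}(\widehat{a}_T^\pi \neq a^*)$ from the left-hand side, I use the elementary bound $\mathrm{kl}(p,q) \geq -H(p) + (1-p)\log\!\frac{1}{1-q}$ with $p = \mathbb{P}_Q(E_T)$ and $q = \mathbb{P}_{P_0}(E_T)$, so that $1-q = \mathbb{P}_{P_0}(\widehat{a}_T^\pi \neq a^*)$ and $H(p) \leq \log 2$. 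This gives, for each $Q$ and each $T$,
\begin{equation*}
\bigl(1 - \mathbb{P}_Q(E_T)\bigr)\cdot\Bigl(-\tfrac{1}{T}\log \mathbb{P}_{P_0}(\widehat{a}_T^\pi \neq a^*)\Bigr) \;\leq\; \sum_{a\in[K]} \kappa^\pi_{T,Q}(a)\,\mathrm{KL}(Q^a, P_0^a) + \tfrac{\log 2}{T}.
\end{equation*}

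Finally, taking the infimum over $Q \in \bigcup_{b \neq a^*} \mathcal{P}(b, \underline{\Delta}, \overline{\Delta})$ on the right-hand side for each fixed $T$ (note the infimum precedes the $\limsup$, matching the target statement) and then taking $\limsup_{T\to\infty}$, the factor $(1 - \mathbb{P}_Q(E_T))$ tends to $1$ by consistency of $\pi$ under every such $Q$, and the $\log 2/T$ term vanishes. This yields
\begin{equation*}
\limsup_{T \to \infty} -\tfrac{1}{T}\log \mathbb{P}_{P_0}(\widehat{a}_T^\pi \neq a^*) \;\leq\; \limsup_{T\to\infty} \inf_{Q\in\cup_{b\neq a^*}\mathcal{P}(b,\underline{\Delta},\overline{\Delta})} \sum_{a\in[K]} \kappa^\pi_{T,Q}(a)\,\mathrm{KL}(Q^a, P_0^a),
\end{equation*}
as desired. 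The two places requiring care, rather than a real obstacle, are (i) verifying the log-likelihood chain rule under adaptive allocations (which reduces to the $\mathcal{F}_{t-1}$-measurability of $A_t$), and (ii) correctly handling the order of $\inf_Q$ and $\limsup_T$; since the bound holds for every $Q$ before the limit, the infimum can be taken under the $\limsup$ without loss.
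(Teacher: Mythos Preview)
Your change-of-measure argument through the data-processing inequality is exactly the standard Kaufmann--Capp\'e--Garivier machinery the paper invokes (the paper does not give its own proof of this lemma; it attributes it to \citet{Kaufman2016complexity} and \citet{Garivier2016}). The per-$Q$ inequality you derive,
\[
\bigl(1-\mathbb{P}_Q(E_T)\bigr)\Bigl(-\tfrac{1}{T}\log \mathbb{P}_{P_0}(\widehat a^\pi_T\neq a^*)\Bigr)\;\le\;\sum_{a\in[K]}\kappa^\pi_{T,Q}(a)\,\mathrm{KL}(Q^a,P_0^a)+\tfrac{\log 2}{T},
\]
is correct and is the right starting point.

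The gap is precisely in your step (ii). Writing $L_T$ for the left-hand rate and $R_T(Q)$ for the KL sum, the per-$Q$ bound yields, for each fixed $Q$, $\limsup_T L_T \le \limsup_T R_T(Q)$, hence $\limsup_T L_T \le \inf_Q \limsup_T R_T(Q)$. The target statement has the operations in the \emph{opposite} order, $\limsup_T \inf_Q R_T(Q)$, which is in general smaller. Your justification ``since the bound holds for every $Q$ before the limit, the infimum can be taken under the $\limsup$ without loss'' does not close this: when you pass to $\inf_Q$ on the right at fixed $T$, the prefactor $1-\mathbb{P}_Q(E_T)$ on the left still depends on $Q$, so you cannot simultaneously replace the right side by $\inf_Q R_T(Q)$ and send the prefactor to $1$. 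What would be needed is $\sup_Q \mathbb{P}_Q(E_T)\to 0$, i.e.\ \emph{uniform} consistency over the alternative class, which Definition~\ref{def:consistent} does not guarantee. In the paper this subtlety never bites, because Lemma~\ref{prp:lowerbound_2arms} is only invoked together with the asymptotically invariant assumption (Definition~\ref{def:asymp_inv}), after which $\kappa^\pi_{T,Q}(a)\to w^\pi(a)$ independently of $Q$ and the order of $\inf_Q$ and $\limsup_T$ is immaterial. If you want the lemma exactly as stated for general consistent $\pi$, you must either argue uniform consistency over the alternative set (e.g.\ via compactness of the mean-parameter region and continuity in $Q$) or weaken the conclusion to $\inf_Q\limsup_T$, which is all the downstream proofs actually use.
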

Here, $Q$ is an alternative hypothesis that is used for deriving lower bounds and not an actual distribution. 
Note that upper (resp. lower) bounds for $- \frac{1}{T}\log \mathbb{P}_{P_0}(\widehat{a}^\pi_T \neq a^*)$ corresponds to lower (resp. upper) bounds for $ \mathbb{P}_{P_0}(\widehat{a}^\pi_T \neq a^*)$.

For two-armed Gaussian bandits, the lower bound can be simplified (See Theorem~12 in \citet{Kaufman2016complexity}). In this case, it is known that by allocating arm $1$ and $2$ with sample sizes $\frac{\sigma^1}{\sigma^1 + \sigma^2} T$ and $\frac{\sigma^2}{\sigma^1 + \sigma^2} T$, we can design asymptotically optimal strategy. Strategies using this allocation rule are called the \emph{Neyman allocation} \citep{Neyman1934OnTT}. 

\subsection{Lower Bound given Known Distributions}
\label{sec:known_dist}
However, when $K \geq 3$, there occur several issues in the derivation of lower bounds, and there does not exist an asymptotically optimal strategy whose upper bound aligns with the conjectured lower bound in Lemma~\ref{prp:lowerbound_2arms} \citep{kaufmann2020hdr,Ariu2021,degenne2023existence}. One of the difficulties comes from the fact that the term $\kappa^\pi_{T, Q}(a)$ does not correspond to sample allocation under the DGP $P_0$ \citep{kaufmann2020hdr}, which incurs a problem called the reverse KL problem \citep{kaufmann2020hdr}. 

To derive lower bounds, we consider restricting strategies to ones such that the limit of  $\kappa^\pi_{T, P}(a)$ ($\lim_{T\to\infty}\kappa^\pi_{T, P}(a)$) is the same across $P\in\mathcal{P}^{\mathrm{G}}(a^*, \underline{\Delta}, \overline{\Delta})$. Such a class of strategies are defined as asymptotically invariant strategies in Definition~\ref{def:asymp_inv}. Note that $\kappa^\pi_{T, P}$ is a deterministic value without randomness because it is an expected value of $\frac{1}{T}\sum^T_{t=1} \mathbbm{1}[A^\pi_t = a]$. A typical example of this class of strategies is one using uniform allocation, such as the Uniform-EBA strategy \citep{Bubeck2011}. Another example is a strategy using the allocation rule only based on variances, such as the Neyman allocation \citep{Neyman1934OnTT}. 

Given an asymptotically invariant strategy $\pi$, there exists $w^\pi\in \mathcal{W}$ such that for all $P\in\mathcal{P}^{\mathrm{G}}(a^*, \underline{\Delta}, \overline{\Delta})$, and $a\in[K]$, $\left|w^\pi(a) - \frac{1}{T}\sum^T_{t=1}\mathbb{E}_P\left[\mathbbm{1}[A_t = a]\right]\right| \to 0$ holds. 

Therefore, for any consistent and asymptotically invariant strategy $\pi$, the following lower bounds hold. The proof is shown in Appendix~\ref{appdx:global}.
\begin{lemma}[Lower bound given known distributions]
\label{thm:global}
For any $0 < \underline{\Delta} \leq \overline{\Delta} < \infty$, any $a^*\in[K]$, and any $P_0 \in \mathcal{P}^{\mathrm{G}}(a^*, \underline{\Delta}, \overline{\Delta})$, 
any consistent (Definition~\ref{def:consistent}) and asymptotically invariant (Definition~\ref{def:asymp_inv}) strategy $\pi\in\Pi^{\mathrm{cons}}\cap \Pi^{\mathrm{inv}}$ satisfies 
\begin{align*}
    &\limsup_{T \to \infty} - \frac{1}{T}\log \mathbb{P}_{P_0}(\widehat{a}^\pi_T \neq a^*)\leq\sup_{w\in\mathcal{W}} \min_{a\in[K]\backslash\{a^*\}}\frac{\left(\Delta^a(P_0)\right)^2}{2\Omega^{a^*, a}(w)}.
 \end{align*}
\end{lemma}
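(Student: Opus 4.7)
My plan is to strengthen the transportation bound of Lemma~\ref{prp:lowerbound_2arms} by exploiting the asymptotic invariance of $\pi$ to freeze the allocation weights at $w^\pi$, and by choosing an explicit family of nearly optimal Gaussian alternatives, which leaves only a tractable quadratic optimization to solve.

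First I would fix a challenger arm $b \in [K] \setminus \{a^*\}$ and construct an alternative $Q_b$ from $P_0$ by perturbing only the means of arms $a^*$ and $b$: set $\mu^{a^*}(Q_b) = \mu^{a^*}(P_0) - x$ and $\mu^b(Q_b) = \mu^b(P_0) + y$ with $x, y \geq 0$, keep all variances and all other means unchanged, and choose $x + y$ just above $\Delta^b(P_0)$ so that $b$ becomes the unique best arm of $Q_b$ and $Q_b \in \mathcal{P}^{\mathrm{G}}(b, \underline{\Delta}, \overline{\Delta})$. Because $\pi \in \Pi^{\mathrm{inv}}$, we have $\kappa^\pi_{T, Q_b}(a) = w^\pi(a) + o(1)$ for each fixed $a$, so the $\limsup$ on the right-hand side of Lemma~\ref{prp:lowerbound_2arms}, evaluated at the single alternative $Q_b$, satisfies $\limsup_{T \to \infty}\sum_{a}\kappa^\pi_{T, Q_b}(a)\,\mathrm{KL}(Q_b^a, P_0^a) = \sum_{a} w^\pi(a)\,\mathrm{KL}(Q_b^a, P_0^a)$. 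The Gaussian KL formula $\mathrm{KL}(\mathcal{N}(\mu', \sigma^2), \mathcal{N}(\mu, \sigma^2)) = (\mu'-\mu)^2/(2\sigma^2)$ then reduces the sum to $w^\pi(a^*)\frac{x^2}{2(\sigma^{a^*})^2} + w^\pi(b)\frac{y^2}{2(\sigma^b)^2}$, since only arms $a^*$ and $b$ contribute.

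Second, I would minimize this quadratic in $(x, y)$ under the single constraint $x + y = \Delta^b(P_0)$ by a one-line Lagrangian (or Cauchy--Schwarz), producing minimizers $x^\star \propto (\sigma^{a^*})^2/w^\pi(a^*)$, $y^\star \propto (\sigma^b)^2/w^\pi(b)$, and optimal value $\frac{(\Delta^b(P_0))^2}{2\Omega^{a^*, b}(w^\pi)}$. Taking the minimum over $b \neq a^*$ (i.e. picking the most informative challenger) and using the trivial bound $\min_{a \neq a^*} \frac{(\Delta^a(P_0))^2}{2\Omega^{a^*,a}(w^\pi)} \leq \sup_{w \in \mathcal{W}} \min_{a \neq a^*} \frac{(\Delta^a(P_0))^2}{2\Omega^{a^*,a}(w)}$, which holds because $w^\pi \in \mathcal{W}$, delivers the stated inequality.

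The main obstacle is the class-membership requirement $Q_b \in \mathcal{P}^{\mathrm{G}}(b, \underline{\Delta}, \overline{\Delta})$: the auxiliary gap constraints $\mu^b(Q_b) - \mu^c(Q_b) \in [\underline{\Delta}, \overline{\Delta}]$ for $c \neq a^*, b$ may force $x + y$ strictly above $\Delta^b(P_0)$ and possibly small compensating shifts of other arms' means, producing additional quadratic KL contributions not present in the clean formula. I would handle this by an $\varepsilon$-perturbation: construct a family of admissible alternatives $Q_b^{\varepsilon}$ with an arbitrarily small slack $\varepsilon>0$ in the gap constraints, apply the above chain of inequalities to $Q_b^{\varepsilon}$, and then let $\varepsilon \downarrow 0$, using continuity of the Gaussian KL in its mean parameter to recover the clean constant $(\Delta^a(P_0))^2/(2\Omega^{a^*,a}(w^\pi))$ in the limit.
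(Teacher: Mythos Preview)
Your proposal is correct and follows essentially the same route as the paper's proof: invoke Lemma~\ref{prp:lowerbound_2arms}, use asymptotic invariance to replace $\kappa^\pi_{T,Q}$ by the fixed weight $w^\pi$, plug in the Gaussian KL formula, reduce to a two-arm quadratic by setting $\mu^c(Q)=\mu^c(P_0)$ for $c\notin\{a^*,b\}$, solve that quadratic to obtain $(\Delta^b(P_0))^2/\bigl(2\Omega^{a^*,b}(w^\pi)\bigr)$, take the minimum over $b$, and finally bound by the supremum over $w\in\mathcal{W}$. The only cosmetic difference is ordering: the paper passes to $\sup_{w\in\mathcal{W}}$ immediately after freezing the weights and then solves the inner $\inf_Q$ for a generic $w$, whereas you keep $w^\pi$ throughout and take the sup at the end; and the paper parameterizes the two-arm alternative by a single common level $\mu$ rather than by your pair $(x,y)$ with $x+y=\Delta^b(P_0)$, which is an equivalent reparameterization. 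Notably, your explicit $\varepsilon$-perturbation to force $Q_b^\varepsilon\in\mathcal{P}^{\mathrm{G}}(b,\underline{\Delta},\overline{\Delta})$ and then let $\varepsilon\downarrow 0$ is a point the paper handles more loosely: it simply relaxes the infimum over $Q\in\cup_{b\neq a^*}\mathcal{P}(b,\underline{\Delta},\overline{\Delta})$ to an infimum over all mean vectors with $\argmax_b\mu^b\neq a^*$, implicitly relying on the same limiting argument you spell out.
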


We refer to a limit of the average sample allocation deduced from lower bounds as the \emph{target allocation ratio} and denote it by $w^*$. We can derive various $w^*$ in different lower bounds. For example, in Lemma~\ref{thm:global}, because $\max_{w\in\mathcal{W}}\min_{a\in[K]\backslash\{a^*\}}\inf_{\stackrel{(\mu^b)\in\mathbb{R}^K}{\mu^a > \mu^{a^*}}}w(a)\frac{\left(\mu^a - \mu^a(P_0)\right)^2}{2\left(\sigma^a\right)^2}$ exists, we define the target allocation ratio as
\begin{align}
\label{eq:target_allocation1}
    w^* = \argmax_{w\in\mathcal{W}}\min_{a\in[K]\backslash\{a^*\}}\frac{\left(\Delta^a(P_0)\right)^2}{2\Omega^{a^*, a}(w)}.
\end{align}

The target allocation ratio $w^*$ works as a candidate about optimal sample allocation of optimal strategies. Here, note that the average sample allocation ratio is linked to an actual strategy, and we can compute $w^*$ independently of each instance $P_0$. 

For the asymptotically invariant strategy, we can show that the strategy proposed by \citet{glynn2004large} is feasible if we can compute $\mathrm{KL}(Q^a, P^a_0)$, and under the strategy, the probability of misidentification aligns with the lower bound with asymptotically invariant strategies. This result is also shown by \citet{degenne2023existence} for more general distributions.

\subsection{Uniform Lower Bound}
\label{sec:lower_smallgap}
As discussed by \citet{glynn2004large} and us, when we know distributional information completely, we can obtain an asymptotically optimal strategy whose probability of misidentification matches the lower bounds in Lemma~\ref{thm:global}. However, when we do not have full information, the strategy by \citet{glynn2004large} is infeasible. Furthermore, there exists $P_0\in\mathcal{P}^{\mathrm{G}}(a^*, \underline{\Delta}, \overline{\Delta})$ whose lower bound is larger than that of \citet{Kaufman2016complexity}. 

For example, in Lemma~\ref{thm:global}, the target allocation ratio is given as \eqref{eq:target_allocation1}. However, the target allocation ratio depends on unknown mean parameters $\mu^a(P_0)$ and the true best arm $a^*$. Therefore, strategies using the target allocation ratio are infeasible if we do not know those values. 

We elucidate this problem by examining how our available information affects the lower bounds. Specifically, we consider lower bounds that are uniformly valid regardless of the missing information. First, we consider characterizing the lower bound in Lemma~\ref{thm:global} by $\overline{\Delta}$, an upper bound of $\Delta^a(P_0)$ as the following lemma.
\begin{theorem}[Uniform lower bound]
\label{thm:semipara_bandit_lower_bound_lsmodel_asymp_inv}
    For any $0 < \underline{\Delta} \leq \overline{\Delta} < \infty$, any $a^*\in[K]$, and any $P_0 \in \mathcal{P}^{\mathrm{G}}(a^*, \underline{\Delta}, \overline{\Delta})$, any consistent (Definition~\ref{def:consistent}) and asymptotically invariant (Definition~\ref{def:asymp_inv}) strategy $\pi\in\Pi^{\mathrm{cons}}\cap \Pi^{\mathrm{inv}}$ satisfies
\begin{align*}
   &\limsup_{T \to \infty} -\frac{1}{T}\log\mathbb{P}_{P_0}(\widehat{a}^\pi_T \neq a^*) \leq \frac{\overline{\Delta}^2}{2\left(\sigma^{a^*} + \sqrt{\sum_{a\in[K]\backslash\{a^*\}}\left(\sigma^a\right)^2}\right)^2}.
\end{align*}
\end{theorem}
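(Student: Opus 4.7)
The plan is to specialize Lemma~\ref{thm:global} by replacing the instance-dependent gaps $\Delta^a(P_0)$ with their uniform ceiling $\overline{\Delta}$, and then to evaluate the resulting max-min problem on the simplex $\mathcal{W}$ in closed form. Because $P_0 \in \mathcal{P}^{\mathrm{G}}(a^*, \underline{\Delta}, \overline{\Delta})$ forces $\Delta^a(P_0) \leq \overline{\Delta}$ for every $a \neq a^*$, we have the pointwise inequality
$$\frac{(\Delta^a(P_0))^2}{2\Omega^{a^*,a}(w)} \leq \frac{\overline{\Delta}^2}{2\Omega^{a^*,a}(w)}$$
for every $a \neq a^*$ and every $w \in \mathcal{W}$, and this inequality survives taking $\min_a$ and then $\sup_w$. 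Combined with Lemma~\ref{thm:global}, the proof reduces to showing
$$\sup_{w \in \mathcal{W}} \min_{a \neq a^*}\frac{1}{2\Omega^{a^*,a}(w)} = \frac{1}{2\left(\sigma^{a^*} + \sqrt{\textstyle\sum_{b \neq a^*}(\sigma^b)^2}\right)^2}.$$

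To evaluate this, I would pass to the equivalent form $\inf_{w \in \mathcal{W}} \max_{a \neq a^*}\Omega^{a^*,a}(w)$ and split off the $(\sigma^{a^*})^2/w(a^*)$ term, which is shared across all $a \neq a^*$, giving
$$\max_{a \neq a^*}\Omega^{a^*,a}(w) = \frac{(\sigma^{a^*})^2}{w(a^*)} + \max_{a \neq a^*}\frac{(\sigma^a)^2}{w(a)}.$$
Writing $S \coloneqq \sum_{b \neq a^*}(\sigma^b)^2$, the weighted-maximum inequality $\max_a x_a/y_a \geq (\sum_a x_a)/(\sum_a y_a)$ applied to $x_a = (\sigma^a)^2$ and $y_a = w(a)$ yields $\max_{a \neq a^*}(\sigma^a)^2/w(a) \geq S/(1 - w(a^*))$. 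Hence the minimax is bounded below by the one-dimensional problem $\inf_{u \in (0,1)}[(\sigma^{a^*})^2/u + S/(1-u)]$, whose minimizer is $u^* = \sigma^{a^*}/(\sigma^{a^*} + \sqrt{S})$ with minimum value $(\sigma^{a^*} + \sqrt{S})^2$ by a routine first-order condition.

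For the matching upper bound I would exhibit the explicit equalizing candidate $w^*(a^*) = u^*$ and $w^*(a) = (\sigma^a)^2/[\sqrt{S}(\sigma^{a^*} + \sqrt{S})]$ for $a \neq a^*$, check that $w^* \in \mathcal{W}$ (the off-$a^*$ weights sum to $\sqrt{S}/(\sigma^{a^*} + \sqrt{S}) = 1 - u^*$), and compute directly that
$$\Omega^{a^*,a}(w^*) = \sigma^{a^*}(\sigma^{a^*}+\sqrt{S}) + \sqrt{S}(\sigma^{a^*}+\sqrt{S}) = (\sigma^{a^*} + \sqrt{S})^2$$
independently of $a \neq a^*$. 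The maximum over $a$ is therefore attained uniformly at $w^*$, the chain of inequalities is tight, and the minimax equals $(\sigma^{a^*} + \sqrt{S})^2$. Substituting back through the reduction yields the claimed bound.

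The main obstacle I anticipate is the tightness of the minimax evaluation: the weighted-maximum inequality is a priori only an inequality, and without an equalizing witness one could fear that $(\sigma^{a^*}+\sqrt{S})^2$ is strict. The candidate $w^*$ above removes this concern by making all $\Omega^{a^*,a}(w^*)$ equal, and it will later be recognized as the generalized Neyman allocation used by the GNA-EBA strategy. A minor technical point is that $\mathcal{W}$ is the open simplex, but positivity of every $\sigma^a$ makes $u^* \in (0,1)$ and $w^*(a) > 0$ automatic, so no boundary regularization is needed.
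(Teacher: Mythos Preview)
Your proposal is correct. Both your argument and the paper's start identically: invoke Lemma~\ref{thm:global}, replace each $\Delta^a(P_0)$ by $\overline{\Delta}$, and reduce to computing $\sup_{w\in\mathcal{W}}\min_{a\neq a^*} 1/\Omega^{a^*,a}(w)$. The divergence is in how this max--min is evaluated. The paper sets up a constrained nonlinear program, writes the Lagrangian, and works through the full KKT system (Appendix~\ref{appdx:lower}) to extract $R^\dagger$ and $w^\dagger$. Your route is substantially more elementary: you split off the $a$-independent term $(\sigma^{a^*})^2/w(a^*)$, apply the mediant inequality $\max_a x_a/y_a \geq (\sum x_a)/(\sum y_a)$ to lower-bound $\max_{a\neq a^*}\Omega^{a^*,a}(w)$ by a one-variable function of $w(a^*)$, optimize that in closed form, and then certify tightness with the equalizing allocation $w^*$. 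This buys you a shorter proof that avoids Lagrange multipliers entirely, and your observation that $w^*$ makes all $\Omega^{a^*,a}(w^*)$ equal is exactly what the KKT complementary-slackness conditions eventually deliver in the paper. Note also that for the theorem statement itself only the ``lower bound'' half of your minimax computation is strictly needed; the explicit witness is extra but recovers the target allocation \eqref{eq:opt_allocation} used downstream.
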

The proof is shown in Appendix~\ref{appdx:lower}. 
Here, the target allocation ratio is given as
\begin{align}
\label{eq:opt_allocation}
&w^*(a^*) = \frac{\sigma^{a^*}}{\sigma^{a^*} + \sqrt{\sum_{b\in[K]\backslash\{a^*\}}\left(\sigma^b\right)^2}},\\
&w^*(a) = \frac{\left(\sigma^b\right)^2 / \sqrt{\sum_{b\in[K]\backslash\{a^*\}}\left(\sigma^b\right)^2}}{\sigma^{a^*} + \sqrt{\sum_{b\in[K]\backslash\{a^*\}}\left(\sigma^b\right)^2}}=(1 - w^*(a^*))\frac{\left(\sigma^a\right)^2}{\sum_{b\in[K]\backslash\{a^*\}}\left(\sigma^b\right)^2},\ \  \forall a \in [K]\backslash\{a^*\}.\nonumber
\end{align}
Note that the lower bounds are characterized by the variances and the true best arm $a^*$. If we know them, we can design optimal strategies whose upper bound aligns with these lower bounds.

When designing strategies, variances and the best arm are required to construct the target allocation ratio. However, assuming that the best arm $a^*$ is known is unrealistic. We also cannot estimate it during an experiment because such a strategy violates the assumption of asymptotically invariant strategies. For example, \citet{Shin2018} estimates $a^*$ during an experiment under the framework of \citet{glynn2004large}, but such a strategy does not satisfy the asymptotically invariant strategies because $a^*$ and $w^*$ can differ across the choice of $P_0$. When considering asymptotically invariant strategies, we need to know $a^*$ before starting an experiment.

To circumvent this issue, one approach is to fix $\widetilde{a}$, independent of $P_0$, before an experiment. We then construct a target allocation ratio as $w^\dagger(\tilde{a}) = \frac{\sigma^{\tilde{a}}}{\sigma^{\tilde{a}} + \sqrt{\sum_{b\in[K]\backslash{\tilde{a}}}\left(\sigma^b\right)^2}}$ and $w^\dagger(a) =(1 - w^\dagger(\tilde{a}))\frac{\left(\sigma^a\right)^2}{\sum_{b\in[K]\backslash{\tilde{a}}}\left(\sigma^b\right)^2}$ for all $a \in [K]\backslash{\tilde{a}}$. arms are then allocated following this target allocation ratio. Under a strategy using such an allocation rule, if $\widetilde{a}$ equals $a^*$, the target allocation ratio $w^\dagger$ aligns with $w^*$ in \eqref{eq:opt_allocation}. We refer to this setting as \emph{hypothesis BAI} (HBAI), as the formulation resembles hypothesis testing. For details, see Section~\ref{sec:hbai}.

However, when $\widetilde{a}$ is \emph{not} equal to $a^*$, the target allocation ratio $w^\dagger$ does not equal $w^*$, under which the strategy becomes suboptimal. In the following section, we consider a best-arm agnostic lower bound by contemplating the worst-case scenario for the choice of $a^*$.
\begin{remark}
    When $K = 2$, the target allocation ratio has a closed-form such that $w^{\mathrm{GNA}}(a) = \frac{\sigma^{a}}{\sigma^{1} + \sigma^2}$ for $a\in[K] = \{1, 2\}$. Note that the target allocation ratio becomes independent of $a^*$ in this case. Allocation rules following this target allocation ratio are referred to as the Neyman allocation \citep{Neyman1934OnTT}. 
\end{remark}

\subsection{Best-Arm-Worst-Case Lower Bound}
\label{sec:best_agnostic}
As discussed above, $a^*$ is unknown in practice, and we consider the worst-case analysis regarding the choice of the best arm. We represent the worst-case for all possible best arms by using the following metric:
\[\inf_{P_0\in\cup_{a^*\in[K]}\mathcal{P}^{\mathrm{G}}(a^*, \underline{\Delta}, \overline{\Delta})}\limsup_{T \to \infty} -\frac{1}{T}\log \mathbb{P}_{P_0}( \widehat{a}^\pi_T \neq a^*(P)).\]
Thus, by taking the worst case over $P_0$, we obtain the following lower bound.
\begin{theorem}[Best-arm-worst-case lower bound]
\label{thm:semipara_bandit_lower_bound_lsmodel_asymp_inv_equiv}
For any $0 < \underline{\Delta} \leq \overline{\Delta} < \infty$, any consistent (Definition~\ref{def:consistent}) and asymptotically invariant (Definition~\ref{def:asymp_inv}) strategy $\pi\in\Pi^{\mathrm{cons}}\cap \Pi^{\mathrm{inv}}$ satisfies 
    \begin{align*}
       &\inf_{P_0\in\cup_{a^*\in[K]}\mathcal{P}^{\mathrm{G}}(a^*, \underline{\Delta}, \overline{\Delta})}\limsup_{T \to \infty} -\frac{1}{T}\log \mathbb{P}_{P_0}( \widehat{a}^\pi_T \neq a^*(P_0))\leq \mathrm{LowerBound}(\overline{\Delta}).
    \end{align*}
\end{theorem}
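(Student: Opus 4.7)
The approach is to combine the single-best-arm bound in Theorem~\ref{thm:semipara_bandit_lower_bound_lsmodel_asymp_inv} with an infimum over the choice of the best arm, taking care to retain the strategy's allocation $w^\pi$ throughout the argument and to apply the max-min inequality only at the very end.

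Fix an arbitrary $\pi \in \Pi^{\mathrm{cons}} \cap \Pi^{\mathrm{inv}}$ with limit allocation $w^\pi \in \mathcal{W}$. The first step is to extract from the derivation behind Theorem~\ref{thm:semipara_bandit_lower_bound_lsmodel_asymp_inv} the pre-optimization, strategy-dependent form of the bound: for every $a^* \in [K]$ and every $P_0 \in \mathcal{P}^{\mathrm{G}}(a^*, \underline{\Delta}, \overline{\Delta})$,
\begin{equation*}
\limsup_{T\to\infty} -\frac{1}{T}\log \mathbb{P}_{P_0}(\widehat{a}^\pi_T \neq a^*) \;\leq\; \min_{a \in [K]\setminus\{a^*\}} \frac{\overline{\Delta}^2}{2\,\Omega^{a^*, a}(w^\pi)}.
\end{equation*}
This follows by applying the transportation lemma (Lemma~\ref{prp:lowerbound_2arms}) with, for each candidate $a \neq a^*$, an alternative $Q \in \mathcal{P}^{\mathrm{G}}(a, \underline{\Delta}, \overline{\Delta})$ obtained by perturbing only the means of arms $a^*$ and $a$ so that $a$ becomes the best arm, and then invoking the Gaussian KL identity $\mathrm{KL}(Q^c, P_0^c) = (\nu^c - \mu^c)^2/(2(\sigma^c)^2)$. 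Asymptotic invariance replaces $\kappa^\pi_{T, Q}$ by $w^\pi$ in the limit, uniformly in the chosen alternative. The statement of Theorem~\ref{thm:semipara_bandit_lower_bound_lsmodel_asymp_inv} is the further supremum of the above inequality over $w \in \mathcal{W}$; we intentionally keep $w^\pi$ at this stage.

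The second step is to take the infimum over $P_0 \in \bigcup_{a^* \in [K]} \mathcal{P}^{\mathrm{G}}(a^*, \underline{\Delta}, \overline{\Delta})$. Since the right-hand side above depends on $a^*$ but no longer on $P_0$, the union-infimum reduces to a minimum over $a^*$:
\begin{equation*}
\inf_{P_0 \in \cup_{a^*} \mathcal{P}^{\mathrm{G}}(a^*, \underline{\Delta}, \overline{\Delta})}\; \limsup_{T\to\infty} -\frac{1}{T}\log \mathbb{P}_{P_0}(\widehat{a}^\pi_T \neq a^*(P_0)) \;\leq\; \min_{a^* \in [K]}\, \min_{a \neq a^*} \frac{\overline{\Delta}^2}{2\,\Omega^{a^*, a}(w^\pi)}.
\end{equation*}
The third and final step is to upper bound by the supremum over $\mathcal{W}$, which is valid because $w^\pi$ is a feasible point:
\begin{equation*}
\min_{a^*, a}\frac{\overline{\Delta}^2}{2\,\Omega^{a^*, a}(w^\pi)} \;\leq\; \max_{w \in \mathcal{W}} \min_{a^*, a}\frac{\overline{\Delta}^2}{2\,\Omega^{a^*, a}(w)} \;=\; \mathrm{LowerBound}(\overline{\Delta}),
\end{equation*}
which closes the chain.

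The main obstacle is the first step: producing the $w^\pi$-dependent bound without prematurely applying the minimax inequality in the wrong direction. If instead one started from the statement of Theorem~\ref{thm:semipara_bandit_lower_bound_lsmodel_asymp_inv} (already maximized over $w$) and pulled $\min_{a^*}$ outside, one would obtain $\min_{a^*} \max_{w} \min_a$, which by the max-min inequality is larger than the target $\max_w \min_{a^*, a}$ and hence strictly weaker than claimed. Commuting these operations is legitimate only at the level of a fixed strategy, where $w^\pi$ is a specific element of $\mathcal{W}$ rather than a free variable. A secondary care-point is verifying that the alternative $Q$ constructed in step one actually lies in $\mathcal{P}^{\mathrm{G}}(a, \underline{\Delta}, \overline{\Delta})$ for every $a^*$ being considered (i.e., simultaneously satisfies all gap constraints with respect to the remaining arms), which pins down the appearance of $\overline{\Delta}$ rather than a mixture of $\underline{\Delta}$ and $\overline{\Delta}$ in the numerator.
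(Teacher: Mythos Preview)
Your proposal is correct and follows essentially the same approach as the paper's own proof: both hold the strategy-specific allocation $w^\pi$ fixed from the derivation behind Lemma~\ref{thm:global}, bound $\Delta^a(P_0)$ by $\overline{\Delta}$, take $\min_{a^*}$ (equivalently the infimum over $P_0$), and only then pass to $\max_{w\in\mathcal{W}}$. Your write-up is in fact more careful than the paper's, which somewhat loosely states ``for each $w\in\mathcal{W}$'' when it really means the bound with $w^\pi$; your explicit warning about the wrong-direction inequality $\min_{a^*}\max_w \geq \max_w\min_{a^*}$ is exactly the point.
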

\begin{proof}
From Lemma~\ref{thm:global}, for each $w\in\mathcal{W}$, we have
\begin{align*}
   &\limsup_{T \to \infty} -\frac{1}{T}\log\mathbb{P}_{P_0}(\widehat{a}^\pi_T \neq a^*(P_0)) \leq \frac{\overline{\Delta}^2}{2\Omega^{a^*,a}(w)}.
\end{align*}
Therefore, by taking $\min_{a^*\in[K]}$ in both LHS and RHS, we have
\begin{align*}
    &\min_{a^*\in[K]}\inf_{P_0\in\mathcal{P}^{\mathrm{G}}(a^*, \underline{\Delta}, \overline{\Delta})}\limsup_{T \to \infty} -\frac{1}{T}\log\mathbb{P}_{P_0}(\widehat{a}^\pi_T \neq a^*(P_0)) \leq \min_{a^*\in[K]}\inf_{P\in\mathcal{P}^{\mathrm{G}}(a^*, \underline{\Delta}, \overline{\Delta})}\frac{\overline{\Delta}^2}{2\Omega^{a^*,a}(w)}.
\end{align*}
By taking the maximium over $w$, we complete the proof. 
\end{proof}
 
The target allocation ratio deduced from this lower bound is 
\begin{align}
\label{eq:target_loade}
w^* = \argmax_{w\in\mathcal{W}}\min_{a^*\in[K],\ a\in[K]\backslash\{a^*\}}\frac{1}{2\Omega^{a^*,a}(w)}.
\end{align}
Here, note that $\max_{w\in\mathcal{W}}\max_{a^*\in[K]}\min_{ a\in[K]\backslash\{a^*\}}\frac{\underline{\Delta}^2}{2\Omega^{a^*, a}(w)}$ does not have a closed-form solution and requires numerical computations. 

Note that when $K = 2$, the target allocation ratio has a closed-form solution such that $w^*(a) = \frac{\sigma^a}{\sigma^1 + \sigma^2}$ for $a\in[2]$. Additionally, the lower bound is given as 
\begin{align*}
   \inf_{P_0\in\cup_{a^*\in[K]}\mathcal{P}^{\mathrm{G}}(a^*, \underline{\Delta}, \overline{\Delta})}\limsup_{T \to \infty} -\frac{1}{T}\log \mathbb{P}_{P_0}( \widehat{a}^\pi_T \neq a^*(P))\leq \frac{\overline{\Delta}^2}{2\left(\sigma^1 + \sigma^2\right)^2}.
\end{align*}
This target allocation ratio and lower bound are equal to those in the lower bound for two-armed Gaussian bandits shown by \citet{Kaufman2016complexity}.

The target allocation ratio is independent of $a^*$. Therefore, we can avoid the issue of dependency on $a^*$, which cannot be estimated in an experiment.


\begin{remark}[Inequalities of lower bounds]
Note that for the derived lower bounds, $
        \mathrm{LowerBound}(\overline{\Delta}) \leq \frac{\overline{\Delta}^2}{2\left(\sigma^{a^*} + \sqrt{\sum_{a\in[K]\backslash\{a^*\}}\left(\sigma^a\right)^2}\right)^2}
$
    holds. The larger lower bounds imply tighter lower bounds; that is, $\mathbb{P}_{P_0}( \widehat{a}^\pi_T \neq a^*)$ is smaller as the lower bounds become larger.
\end{remark}

\begin{remark}[Two-armed Gaussian bandits]
    When $K=2$, the above serial arguments about the lower bound can be simplified. The reason why we cannot use Theorem~\ref{thm:semipara_bandit_lower_bound_lsmodel_asymp_inv} is because the target allocation ratio depends on $a^*$. However, when $K=2$, the target allocation ratio is independent of $a^*$ and given as the ratio of the standard deviations. This is because the comparison between the best and suboptimal arms plays an important role, which requires the best arm $a^*$. However, when $K=2$, a pair of comparisons is unique; that is, we always allocate arms comparing arms $1$ and $2$, regardless of which arm is best. Therefore, we can simplify the lower bounds when $K=2$. Specifically, optimal strategies just allocate treatment with the ratio of the standard deviation, which is also referred to as the Neyman allocation \citep{Neyman1934OnTT}. Also, see Theorem~12 in \citet{Kaufman2016complexity} for details.
\end{remark}

\subsection{Relaxing the Strategy Class Restrictions}
\label{sec:relax_asmpopt}
In Theorem~\ref{thm:semipara_bandit_lower_bound_lsmodel_asymp_inv_equiv}, we restrict the strategies to consistent and asymptotically invariant ones. However, the latter restriction can be removed. We introduced the restriction of asymptotically invariant strategies to derive a lower bound from $\limsup_{T \to \infty} - \frac{1}{T}\log \mathbb{P}_{P_0}(\widehat{a}^\pi_T \neq a^*)
    \leq \limsup_{T \to \infty}\inf_{Q\in\cup_{b \neq a^*}\mathcal{P}(b, \underline{\Delta}, \overline{\Delta})}\sum_{a\in[K]} \kappa^\pi_{T, Q}(a)\mathrm{KL}(Q^a, P^a_0)$ in Lemma~\ref{prp:lowerbound_2arms}. Although we discussed $\inf_{P_0\in\cup_{a^*\in[K]}\mathcal{P}^{\mathrm{G}}(a^*, \underline{\Delta}, \overline{\Delta})}\limsup_{T \to \infty} -\frac{1}{T}\log \mathbb{P}_{P_0}( \widehat{a}^\pi_T \neq a^*(P))$ as a result of information-theoretical analysis, we can derive a lower bound from 
    \[\inf_{Q\in\cup_{a^*\in[K]}\mathcal{P}^{\mathrm{G}}(a^*, \underline{\Delta}, \overline{\Delta})}\limsup_{T \to \infty} -\frac{1}{T}\log \mathbb{P}_{Q}( \widehat{a}^\pi_T \neq a^*(Q)) \leq \limsup_{T \to \infty}\inf_{Q\in\cup_{b \neq a^*}\mathcal{P}(b, \underline{\Delta}, \overline{\Delta})}\sum_{a\in[K]} \kappa^\pi_{T, P_0}(a)\mathrm{KL}(P^a_0, Q^a).\]
    This metric is mathematically equivalent to $\inf_{P_0\in\cup_{a^*\in[K]}\mathcal{P}^{\mathrm{G}}(a^*, \underline{\Delta}, \overline{\Delta})}\limsup_{T \to \infty} -\frac{1}{T}\log \mathbb{P}_{P_0}( \widehat{a}^\pi_T \neq a^*(P))$, but we can derive a lower bound without using the asymptotically invariant strategy restriction. This is because $\kappa^\pi_{T, P_0}(a)$ appears in the lower bound, instead of $\kappa^\pi_{T, Q}(a)$.

We show the lower bound in Theorem~\ref{thm:semipara_bandit_lower_bound_lsmodel_asymp_inv_equiv2} and the proof in Appendix~\ref{appdx:equiv2}.  

Additionally, from the lower bounds in Theorems~\ref{thm:semipara_bandit_lower_bound_lsmodel_asymp_inv_equiv} and \ref{thm:semipara_bandit_lower_bound_lsmodel_asymp_inv_equiv2} and the upper bound in Theorem~\ref{cor:asymp_opt}, we obtain Corollary~\ref{cor:asymp_cons_equiv}. This corollary implies that when $\overline{\Delta} = \underline{\Delta}$, we can obtain an asymptotically optimal strategy if we know variances.

\begin{figure}[ht]
  \centering
  \begin{minipage}[b]{0.3\linewidth}
    \includegraphics[height=100mm]{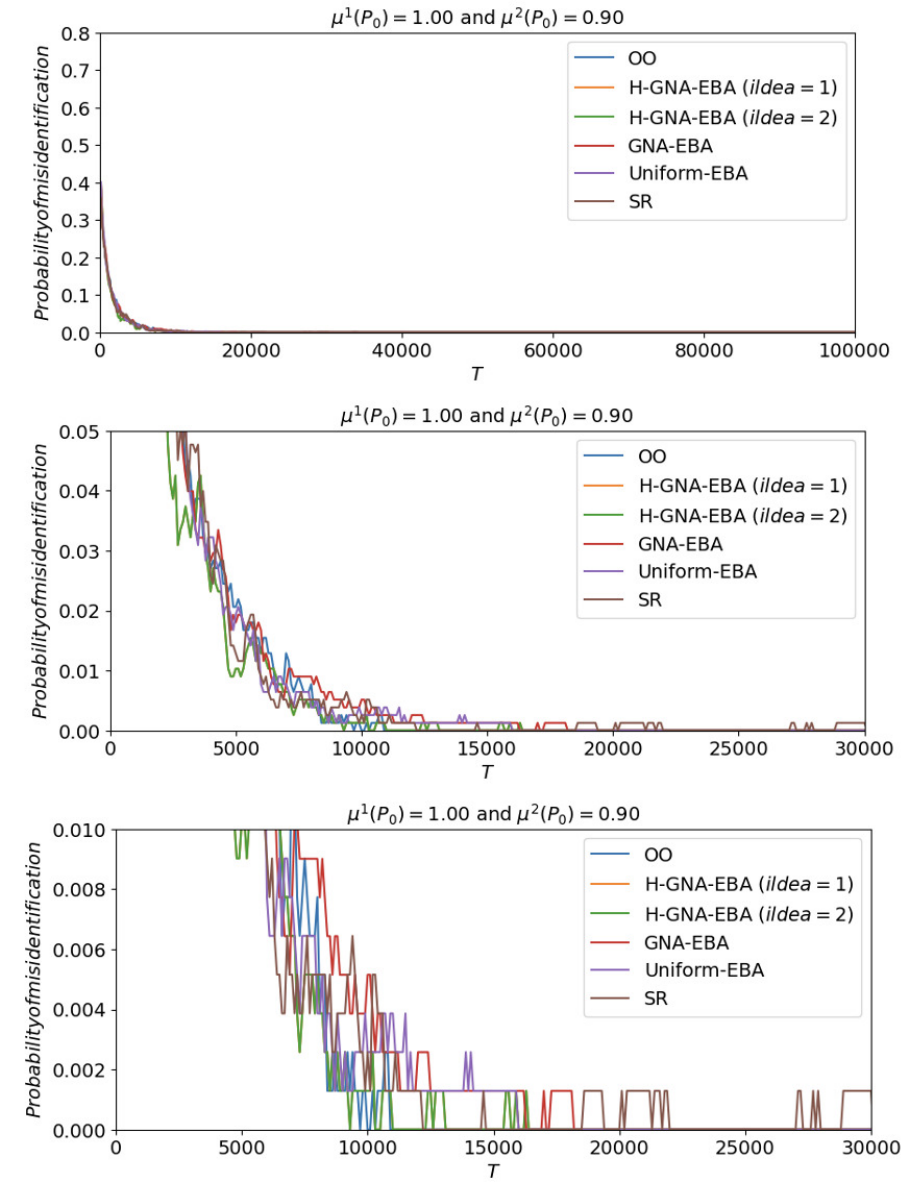}
    \caption*{$K = 2$}
  \end{minipage}
  \hfill 
  \begin{minipage}[b]{0.46\linewidth}
    \includegraphics[height=100mm]{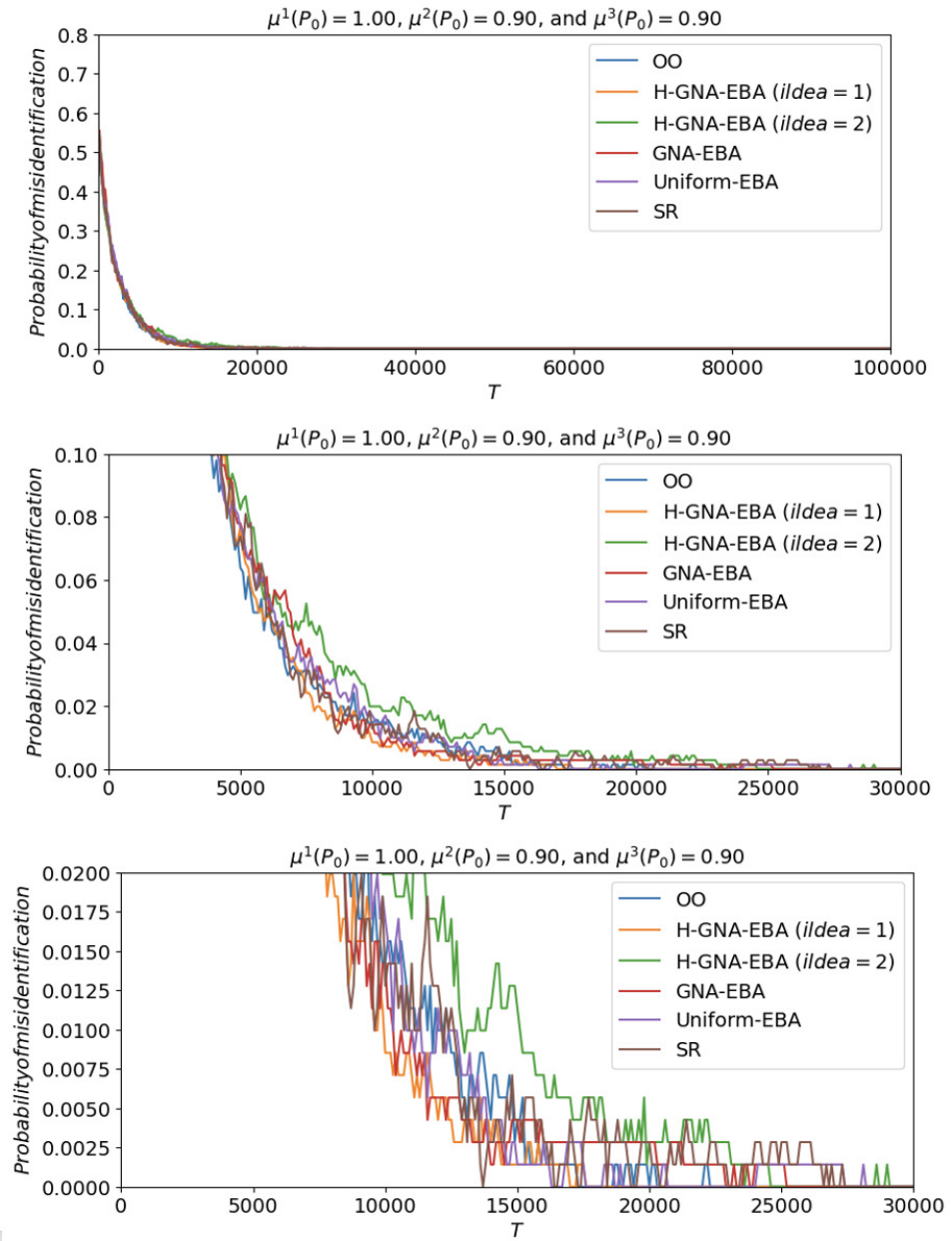}
    \caption*{$K = 3$}
  \end{minipage}
  \caption{Experimental results ($K=2$ and $K=3$). The left figure shows the result with $K=2$, and the right figure shows the result with $K=2$. The $y$-axis and $x$-axis denote the probability of misidentification and $T$, respectively. We show the same results with different three scales of the $y$-axis and $x$-axis for each $K=2$ and $K=3$.}
      \label{fig:synthetic_results4}
\end{figure}

\begin{figure}[ht]
  \centering
  \begin{minipage}[b]{0.45\linewidth}
    \includegraphics[height=100mm]{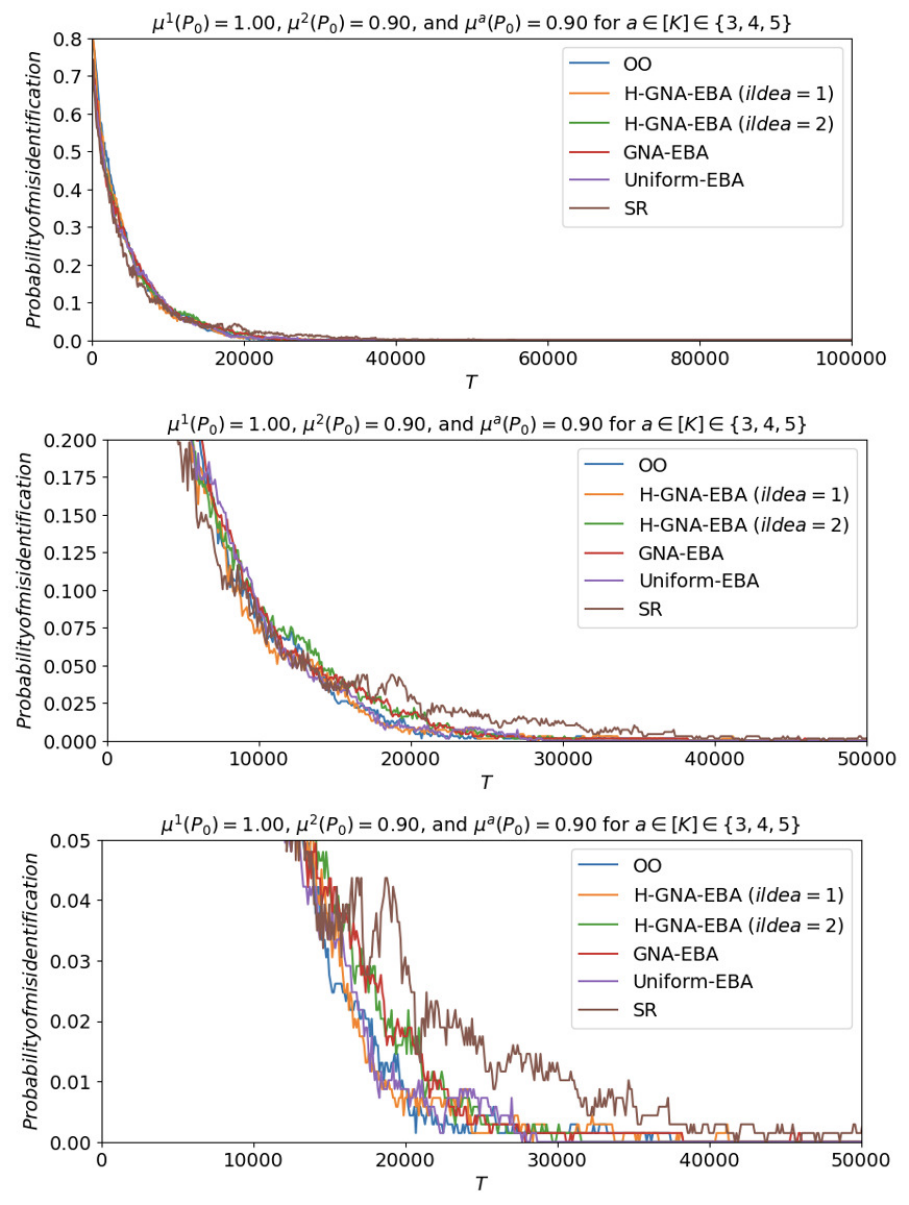}
    \caption*{$K = 5$}
  \end{minipage}
  \hfill 
  \begin{minipage}[b]{0.45\linewidth}
    \includegraphics[height=100mm]{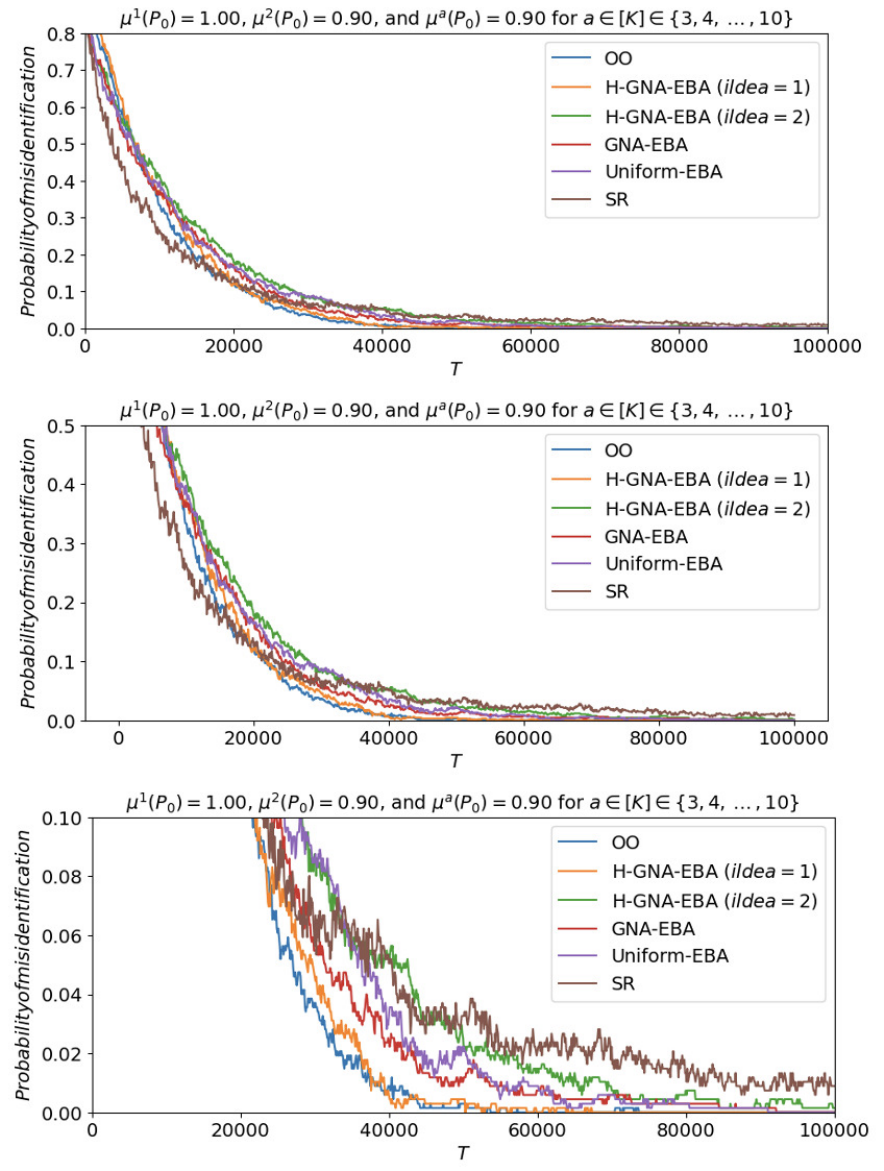}
    \caption*{$K = 10$}
  \end{minipage}
  \caption{Experimental results ($K=5$ and $K=10$). The left figure shows the result with $K=5$, and the right figure shows the result with $K=10$. The $y$-axis and $x$-axis denote the probability of misidentification and $T$, respectively. We show the same results with different three scales of the $y$-axis and $x$-axis for each $K=5$ and $K=10$.}
      \label{fig:synthetic_results5}
\end{figure}

\begin{figure}[ht]
  \centering
  \includegraphics[height=100mm]{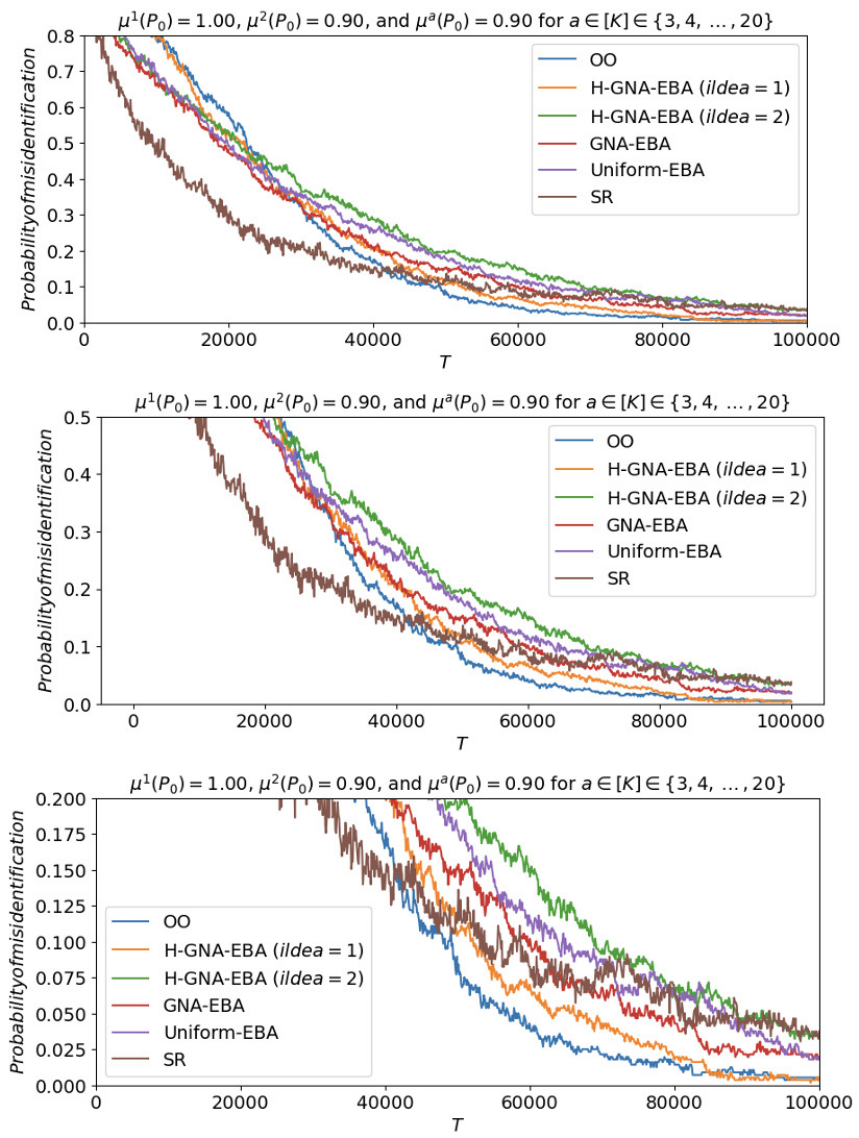}
  \caption*{$K=20$.}
  \caption{Experimental results ($K=20$). The $y$-axis and $x$-axis denote the probability of misidentification and $T$, respectively. We show the same results with different three scales of the $y$-axis and $x$-axis.}
      \label{fig:synthetic_results6}
\end{figure}

\section{Hypothesis BAI}
\label{sec:hbai}
Based on the arguments in Section~\ref{sec:lower_smallgap}, 
    we design the Hypothesis GNA-EBA (H-GNA-EBA) strategy that utilizes a conjecture $\widetilde{a}\in[K]$ of $a^*$, instead of considering the worst-case for $a^*$. We refer to $\widetilde{a}$ as a hypothetical best arm. 
    
    In the H-GNA-EBA strategy, we define a target allocation ratio as $w^{\mathrm{H}\mathchar`-\mathrm{GNA}}(\widetilde{a}) = \frac{\sigma^{\tilde{a}}}{\sigma^{\tilde{a}} + \sqrt{\sum_{b\in[K]\backslash\{\tilde{a}\}}\left(\sigma^b\right)^2}}$ and $
w^{\mathrm{H}\mathchar`-\mathrm{GNA}}(a) =\left(1 - w^{\mathrm{H}\mathchar`-\mathrm{GNA}}(\widetilde{a})\right)\frac{\left(\sigma^a\right)^2}{\sum_{b\in[K]\backslash\{\tilde{a}\}}\left(\sigma^b\right)^2}$ for all $a \in [K]\backslash\{\widetilde{a}\}$. If $\widetilde{a}$ is equal to $a^*$, the upper bound of the H-GNA-EBA strategy for the probability of misidentification aligns with the lower bound in Theorem~\ref{thm:semipara_bandit_lower_bound_lsmodel_asymp_inv}. 

This strategy is more suitable under a setting different from BAI, where there are null and alternative hypotheses such that 
$H_0: a^* \neq \widetilde{a} \in [K]$ and $H_1: a^* = \widetilde{a}$;
that is, the null hypothesis corresponds to a situation where the hypothetical best arm is \emph{not} the best. In contrast, the alternative hypothesis posits that the hypothetical best arm is the best.
Then, we consider minimizing the probability of misidentification when the alternative hypothesis is true. This probability corresponds to power in hypothesis testing. Our aim is to minimize the misidentification probability when the null hypothesis is false, corresponding to the \emph{power of the test}. We refer to this setting as Hypothesis BAI (HBAI). We present two examples for this setting.

\begin{example}[Online advertisement]
Let $\widetilde{a}\in[K]$ be an arm corresponding to a new advertisement. Our null hypothesis $a^* \neq \widetilde{a}$ implies that the existing advertisements $a\in[K]\backslash\{\widetilde{a}\}$ are superior to the new advertisement. Our goal is to reject the null hypothesis with a maximal probability when the null hypothesis is not correct; that is, the new hypothesis is better than the others. 
\end{example}

\begin{example}[Clinical trial]
Let $\widetilde{a}\in[K]$ be a new drug. Our null hypothesis $a^* \neq \widetilde{a}$ implies that the existing drug $a\in[K]\backslash\{\widetilde{a}\}$ is superior to the new drug (equivalently, the new drug is not good as the existing drugs). Our goal is to reject the null hypothesis with a maximal probability when the new drug is better than the others.
\end{example}

The asymptotic efficiency of hypothesis testing is referred to as the \emph{Bahadur efficiency} \citep{Bahadur1960,Bahadur1967,Bahadur1971} of the test\footnote{Note that the Bahadur efficiency of a test evaluates $P$-values (random variable), not the probability of misidentification (non-random variable). However, these are closely related. See \citet{Bahadur1967}.}.

\section{Simulation Studies}
\label{sec:exp_results}
Using simulation studies, we investigate the performances of our GNA-EBA and the H-GNA-EBA strategies. We compare them with the Uniform-EBA strategy \citep[Uniform,][]{Bubeck2011}, which allocates arms with the same allocation ratio ($1/K$), the successive rejects strategy \citep[SR,][]{Audibert2010}, and the strategy for ordinal optimization (OO) proposed by \citep{glynn2004large}. Note that the OO requires full knowledge about distributions, including mean parameters and which arm is the best arm. However, if we know it, the OO yields the theoretically lowest probability of misidentification \citep{glynn2004large,kaufmann2020hdr,degenne2023existence}. Therefore, we regard the OO as a practically infeasible oracle strategy. We investigate two H-GNA-EBA strategies by setting $\widetilde{a} = 1$ and $\widetilde{a} = 2$. The SR strategy is often regarded as the most practical strategy.

Let $K \in \{2, 5, 10, 20\}$. The best arm is arm $1$ and $\mu^1(P) = 1$. 
The expected outcomes of suboptimal arms $0.75$ ($\mu^a(P) = 0.75$ for all $a\in[K]\backslash\{1\}$). The variances are drawn from a uniform distribution with support $[0.5, 5]$. We continue the strategies until $T = 100,000$. We conduct $100$ independent trials for each setting. For each $T\in\{100, 200, 300, \cdots, 99,900, 100,000\}$, we plot the empirical probability of misidentification in Figures~\ref{fig:synthetic_results4}--~\ref{fig:synthetic_results6}. For each result, we show three figures with different scales on the $x$-axis and $y$-axis to observe the details.

Our theoretical results imply that the strategies with the highest performance are the OO and then the H-GNA-EBA strategy with $\widetilde{a} = 1$ (their performances are equal). Then, the GNA-EBA strategy and the Uniform strategy follow them. The other strategies follow these three strategies, but the order of the performances is not predictable from the theory.

From the results, as the theory predicts, strategies that use more information can achieve a lower probability of misidentification; that is, the OO and the H-GNA-EBA strategy with $\widetilde{a} = 1$ are the best strategies. Then, the GNA-EBA strategy follows them. Note that the OO and H-GNA-EBA strategies with $\widetilde{a} = 1$ are oracle strategies, which are feasible only when we can conjecture which arm is the best arm in advance. Interestingly, the SR strategy performs well in early rounds, but the other asymptotically optimal strategies outperform it in late rounds. 

In Appendix~\ref{appdx:exp}, we conduct the additional three experiments. In the first experiment in the Appendix, the expected outcomes of suboptimal arms are drawn from a uniform distribution with support $[0.75, 0.90]$ for $a\in[K]\backslash\{1, 2\}$, while $\mu^2(P) = 0.75$. The variances are drawn from a uniform distribution with support $[0.5, 5]$. We continue the strategies until $T = 10,000$. We conduct $100$ independent trials for each setting. For each $T\in\{100, 200, 300, \cdots, 99,900, 100,000\}$, we plot the empirical probability of misidentification in Figures~\ref{fig:synthetic_results7}--~\ref{fig:synthetic_results20}. In this case, our theoretical results imply that the highest performance is the OO, and then the H-GNA-EBA strategy with $\widetilde{a} = 1$, the GNA-EBA, and the Uniform strategies follow. The other strategies follow them, but the order of the performances is not predictable from the theory. In this experiment, the strategies show the predicted performances. 

In the remaining two experiments, we conduct experiments with the same setting as the previous two experiments, but we changed the variances are drawn from a uniform distribution with support $[0.5, 10]$, while the variances are drawn from a uniform distribution with support $[0.5, 5]$ in the previous experiments. We plot the empirical probability of misidentification in Figures~\ref{fig:synthetic_results4}--~\ref{fig:synthetic_results6}. In those cases, our proposed strategies outperform the others due to the larger diversity of the variances. However, because the convergence also becomes slow, the SR outperforms in the early stages longer than the previous experiments, though our strategies outperform it in late rounds.  

\section{Conclusion}
\label{sec:conclusin}
We investigated scenarios in experimental design, known as BAI or ordinal optimization. We found that the optimality of strategies significantly depends on the information available prior to an experiment. Based on our findings, we developed the novel worst-case lower bounds for the probability of misidentification and then proposed a strategy whose worst-case 
probability of misidentification matches these worst-case lower bounds.

Lastly, we propose two important extensions of our results. 
First, we suggest the potential extension of our results to BAI with general distributions. In this study, we developed lower bounds for multi-armed Gaussian bandits. By approximating the KL divergence of distributions in a broader class under the small-gap regime, we can apply our results to various distributions, including Bernoulli distributions and general nonparametric models. In fact, \citet{kato2023fixedbudgethyp} extends our results, developing lower bounds for more general distributions using semiparametric theory \citep{bickel98,Vaart1998}. These extended results, which refine those of \citet{kato2023fixedbudgethyp}, will be published later. It is important to note that in the small-gap regime, asymptotically optimal strategies for one-parameter bandit models, such as Bernoulli distributions, utilize uniform allocation because variances of outcomes become equal as gaps approach zero\footnote{Under the small-gap regime, as $\Delta^a(P)\to 0$, variances in Bernoulli bandits become equal because the variances are $\mu^a(1-\mu^a)$. For details, see \citet{kato2023fixedbudgethyp}.}. The results from \citet{kato2023fixedbudgethyp} align with the conjecture in \citet{Kaufman2016complexity}. From the results of \citet{kato2023fixedbudgethyp}, we can interpret the GNA-EBA strategy as an extension of the Uniform-EBA strategy proposed by \citet{Bubeck2009,Bubeck2011}, which recommends uniform allocation in bandit models with bounded supports.

The second extension is optimal strategies that estimate variances during an experiment, instead of assuming known variances. If variances are estimated during an experiment, the estimation error affects the probability of misidentification. We conjecture that an optimal strategy exists that estimates variances during an experiment because similar results can be obtained under the central limit theorem \citep{Laan2008TheCA,Hahn2011,hadad2019,Kato2020adaptive,Kato2023minimax}. However, its existence remains unproven due to the difficulty in evaluating tail probabilities. Related work, such as that by \citet{Kato2023minimax} and \citet{Jourdan2023}, may help to resolve this issue.

\bibliographystyle{arXiv.bbl}
\bibliography{infloss}

\begin{thebibliography}{89}
\providecommand{\natexlab}[1]{#1}
\providecommand{\url}[1]{\texttt{#1}}
\expandafter\ifx\csname urlstyle\endcsname\relax
  \providecommand{\doi}[1]{doi: #1}\else
  \providecommand{\doi}{doi: \begingroup \urlstyle{rm}\Url}\fi

\bibitem[Adusumilli(2022)]{adusumilli2022minimax}
Karun Adusumilli.
\newblock Neyman allocation is minimax optimal for best arm identification with
  two arms, 2022.
\newblock {a}rXiv:2204.05527.

\bibitem[Adusumilli(2023)]{Adusumilli2021risk}
Karun Adusumilli.
\newblock Risk and optimal policies in bandit experiments, 2023.
\newblock {a}rXiv:2112.06363.

\bibitem[Akritas \& Kourouklis(1988)Akritas and Kourouklis]{Akritas1988187}
Michael~G. Akritas and Stavros Kourouklis.
\newblock Local bahadur efficiency of score tests.
\newblock \emph{Journal of Statistical Planning and Inference}, 19\penalty0
  (2):\penalty0 187--199, 1988.

\bibitem[Ariu et~al.(2021)Ariu, Kato, Komiyama, McAlinn, and Qin]{Ariu2021}
Kaito Ariu, Masahiro Kato, Junpei Komiyama, Kenichiro McAlinn, and Chao Qin.
\newblock Policy choice and best arm identification: Asymptotic analysis of
  exploration sampling, 2021.
\newblock {a}rXiv:2109.08229.

\bibitem[Armstrong(2022)]{Armstrong2022}
Timothy~B. Armstrong.
\newblock Asymptotic efficiency bounds for a class of experimental designs,
  2022.
\newblock {a}rXiv:2205.02726.

\bibitem[Athey \& Imbens(2017)Athey and Imbens]{Athey2017}
S.~Athey and G.W. Imbens.
\newblock Chapter 3 - the econometrics of randomized experimentsa.
\newblock In \emph{Handbook of Field Experiments}, volume~1 of \emph{Handbook
  of Economic Field Experiments}, pp.\  73--140. North-Holland, 2017.

\bibitem[Atsidakou et~al.(2023)Atsidakou, Katariya, Sanghavi, and
  Kveton]{atsidakou2023bayesian}
Alexia Atsidakou, Sumeet Katariya, Sujay Sanghavi, and Branislav Kveton.
\newblock Bayesian fixed-budget best-arm identification, 2023.
\newblock {a}rXiv:2211.08572.

\bibitem[Audibert et~al.(2010)Audibert, Bubeck, and Munos]{Audibert2010}
Jean-Yves Audibert, Sébastien Bubeck, and Remi Munos.
\newblock Best arm identification in multi-armed bandits.
\newblock In \emph{Conference on Learning Theory}, pp.\  41--53, 2010.

\bibitem[Bahadur(1960)]{Bahadur1960}
R.~R. Bahadur.
\newblock {Stochastic Comparison of Tests}.
\newblock \emph{The Annals of Mathematical Statistics}, 31\penalty0
  (2):\penalty0 276 -- 295, 1960.

\bibitem[Bahadur(1967)]{Bahadur1967}
R.~R. Bahadur.
\newblock {Rates of Convergence of Estimates and Test Statistics}.
\newblock \emph{The Annals of Mathematical Statistics}, 38\penalty0
  (2):\penalty0 303 -- 324, 1967.

\bibitem[Bahadur(1971)]{Bahadur1971}
R.~R. Bahadur.
\newblock \emph{1. Some Limit Theorems in Statistics}, pp.\  1--40.
\newblock Society for Industrial and Applied Mathematics, 1971.

\bibitem[Bechhofer et~al.(1968)Bechhofer, Kiefer, and
  Sobel]{bechhofer1968sequential}
R.E. Bechhofer, J.~Kiefer, and M.~Sobel.
\newblock \emph{Sequential Identification and Ranking Procedures: With Special
  Reference to Koopman-Darmois Populations}.
\newblock University of Chicago Press, 1968.

\bibitem[Bechhofer(1954)]{Bechhofer1954}
Robert~E. Bechhofer.
\newblock {A Single-Sample Multiple Decision Procedure for Ranking Means of
  Normal Populations with known Variances}.
\newblock \emph{The Annals of Mathematical Statistics}, 25\penalty0
  (1):\penalty0 16 -- 39, 1954.

\bibitem[Bickel et~al.(1998)Bickel, Klaassen, Ritov, and Wellner]{bickel98}
P.~J. Bickel, C.~A.~J. Klaassen, Y.~Ritov, and J.~A. Wellner.
\newblock \emph{Efficient and Adaptive Estimation for Semiparametric Models}.
\newblock Springer, 1998.

\bibitem[Branke et~al.(2007)Branke, Chick, and Schmidt]{Branke2007}
J\"{u}rgen Branke, Stephen~E. Chick, and Christian Schmidt.
\newblock Selecting a selection procedure.
\newblock \emph{Management Science}, 53\penalty0 (12):\penalty0 1916--1932,
  2007.

\bibitem[Bubeck et~al.(2009)Bubeck, Munos, and Stoltz]{Bubeck2009}
S{\'e}bastien Bubeck, R{\'e}mi Munos, and Gilles Stoltz.
\newblock Pure exploration in multi-armed bandits problems.
\newblock In \emph{Algorithmic Learning Theory}, pp.\  23--37. Springer Berlin
  Heidelberg, 2009.

\bibitem[Bubeck et~al.(2011)Bubeck, Munos, and Stoltz]{Bubeck2011}
S{\'e}bastien Bubeck, R{\'e}mi Munos, and Gilles Stoltz.
\newblock Pure exploration in finitely-armed and continuous-armed bandits.
\newblock \emph{Theoretical Computer Science}, 2011.

\bibitem[Carpentier \& Locatelli(2016)Carpentier and Locatelli]{Carpentier2016}
Alexandra Carpentier and Andrea Locatelli.
\newblock Tight (lower) bounds for the fixed budget best arm identification
  bandit problem.
\newblock In \emph{COLT}, 2016.

\bibitem[CDER(2018)]{guide}
CBER CDER.
\newblock Adaptive designs for clinical trials of drugs and biologics guidance
  for industry draft guidance, 05 2018.

\bibitem[Chen et~al.(2000)Chen, Lin, Y\"{u}cesan, and Chick]{chen2000}
Chun-Hung Chen, Jianwu Lin, Enver Y\"{u}cesan, and Stephen~E. Chick.
\newblock Simulation budget allocation for further enhancing theefficiency of
  ordinal optimization.
\newblock \emph{Discrete Event Dynamic Systems}, 10\penalty0 (3):\penalty0
  251–270, 2000.

\bibitem[Chernoff(1959)]{Chernoff1959}
Herman Chernoff.
\newblock {Sequential Design of Experiments}.
\newblock \emph{The Annals of Mathematical Statistics}, 30\penalty0
  (3):\penalty0 755 -- 770, 1959.

\bibitem[Chow \& Chang(2011)Chow and Chang]{ChowChang201112}
Shein-Chung Chow and Mark Chang.
\newblock \emph{Adaptive Design Methods in Clinical Trials}.
\newblock Chapman and Hall/CRC, 2 edition, 2011.

\bibitem[Cram\'{e}r(1938)]{Cramer1938}
Harald Cram\'{e}r.
\newblock Sur un nouveau th{\'e}or{\`e}me-limite de la th{\'e}orie des
  probabilit{\'e}s.
\newblock In \emph{Colloque consacr{\'e} {\`a} la th{\'e}orie des
  probabilit{\'e}s}, volume 736, pp.\  2–23. Hermann, 1938.

\bibitem[Degenne(2023)]{degenne2023existence}
R{\'e}my Degenne.
\newblock On the existence of a complexity in fixed budget bandit
  identification.
\newblock In \emph{Conference on Learning Theory}, volume 195, pp.\
  1131--1154. PMLR, 2023.

\bibitem[Dembo \& Zeitouni(2009)Dembo and Zeitouni]{Dembo2009large}
A.~Dembo and O.~Zeitouni.
\newblock \emph{Large Deviations Techniques and Applications}.
\newblock Stochastic Modelling and Applied Probability. Springer Berlin
  Heidelberg, 2009.

\bibitem[Ellis(1984)]{Ellis1984}
Richard~S. Ellis.
\newblock {Large Deviations for a General Class of Random Vectors}.
\newblock \emph{The Annals of Probability}, 12\penalty0 (1):\penalty0 1 -- 12,
  1984.

\bibitem[Even-Dar et~al.(2006)Even-Dar, Mannor, Mansour, and
  Mahadevan]{EvanDar2006}
Eyal Even-Dar, Shie Mannor, Yishay Mansour, and Sridhar Mahadevan.
\newblock Action elimination and stopping conditions for the multi-armed bandit
  and reinforcement learning problems.
\newblock \emph{Journal of machine learning research}, 2006.

\bibitem[Fisher(1935)]{Fisher1935}
R.~A. Fisher.
\newblock \emph{{The Design of Experiments}}.
\newblock Oliver and Boyd, Edinburgh, 1935.

\bibitem[Garivier \& Kaufmann(2016)Garivier and Kaufmann]{Garivier2016}
Aurélien Garivier and Emilie Kaufmann.
\newblock Optimal best arm identification with fixed confidence.
\newblock In \emph{Conference on Learning Theory}, 2016.

\bibitem[G\"{a}rtner(1977)]{Gartner1977}
Jürgen G\"{a}rtner.
\newblock On large deviations from the invariant measure.
\newblock \emph{Theory of Probability \& Its Applications}, 22\penalty0
  (1):\penalty0 24--39, 1977.

\bibitem[Glynn \& Juneja(2004)Glynn and Juneja]{glynn2004large}
Peter Glynn and Sandeep Juneja.
\newblock A large deviations perspective on ordinal optimization.
\newblock In \emph{Proceedings of the 2004 Winter Simulation Conference},
  volume~1. IEEE, 2004.

\bibitem[Gupta et~al.(2021)Gupta, Lipton, and Childers]{gupta2021efficient}
Shantanu Gupta, Zachary~Chase Lipton, and David Childers.
\newblock Efficient online estimation of causal effects by deciding what to
  observe.
\newblock In \emph{Advances in Neural Information Processing Systems}, 2021.

\bibitem[Gupta(1956)]{gupta1956decision}
S.S. Gupta.
\newblock \emph{On a Decision Rule for a Problem in Ranking Means}.
\newblock University of North Carolina at Chapel Hill, 1956.

\bibitem[Hadad et~al.(2021)Hadad, Hirshberg, Zhan, Wager, and Athey]{hadad2019}
Vitor Hadad, David~A. Hirshberg, Ruohan Zhan, Stefan Wager, and Susan Athey.
\newblock Confidence intervals for policy evaluation in adaptive experiments.
\newblock \emph{Proceedings of the National Academy of Sciences}, 118\penalty0
  (15), 2021.

\bibitem[Hahn(1998)]{hahn1998role}
Jinyong Hahn.
\newblock On the role of the propensity score in efficient semiparametric
  estimation of average treatment effects.
\newblock \emph{Econometrica}, 66\penalty0 (2):\penalty0 315--331, 1998.

\bibitem[Hahn et~al.(2011)Hahn, Hirano, and Karlan]{Hahn2011}
Jinyong Hahn, Keisuke Hirano, and Dean Karlan.
\newblock Adaptive experimental design using the propensity score.
\newblock \emph{Journal of Business and Economic Statistics}, 2011.

\bibitem[He \& Shao(1996)He and Shao]{He1996}
Xuming He and Qi-man Shao.
\newblock Bahadur efficiency and robustness of studentized score tests.
\newblock \emph{Annals of the Institute of Statistical Mathematics},
  48\penalty0 (2):\penalty0 295--314, Jun 1996.

\bibitem[Hirano \& Porter(2009)Hirano and Porter]{Hirano2009}
Keisuke Hirano and Jack~R. Porter.
\newblock Asymptotics for statistical treatment rules.
\newblock \emph{Econometrica}, 77\penalty0 (5):\penalty0 1683--1701, 2009.

\bibitem[Hirano \& Porter(2023)Hirano and Porter]{hirano2023asymptotic}
Keisuke Hirano and Jack~R. Porter.
\newblock Asymptotic representations for sequential decisions, adaptive
  experiments, and batched bandits, 2023.
\newblock {a}rXiv:2302.03117.

\bibitem[Ho et~al.(1992)Ho, Sreenivas, and Vakili]{Ho1992}
{Y. C.} Ho, {R. S.} Sreenivas, and P.~Vakili.
\newblock Ordinal optimization of deds.
\newblock \emph{Discrete Event Dynamic Systems: Theory and Applications},
  2\penalty0 (1):\penalty0 61--88, July 1992.

\bibitem[Hong et~al.(2021)Hong, Fan, and Luo]{Hong_2021}
L.~Jeff Hong, Weiwei Fan, and Jun Luo.
\newblock Review on ranking and selection: A new perspective.
\newblock \emph{Frontiers of Engineering Management}, 8\penalty0 (3):\penalty0
  321--343, mar 2021.

\bibitem[Jamieson et~al.(2014)Jamieson, Malloy, Nowak, and
  Bubeck]{Jamieson2014}
Kevin Jamieson, Matthew Malloy, Robert Nowak, and Sébastien Bubeck.
\newblock lil' ucb : An optimal exploration algorithm for multi-armed bandits.
\newblock In \emph{Conference on Learning Theory}, 2014.

\bibitem[Jourdan et~al.(2022)Jourdan, Degenne, Baudry, de~Heide, and
  Kaufmann]{Jourdan2022}
Marc Jourdan, Rémy Degenne, Dorian Baudry, Rianne de~Heide, and Emilie
  Kaufmann.
\newblock Top two algorithms revisited, 2022.

\bibitem[Jourdan et~al.(2023)Jourdan, R{\'e}my, and Emilie]{Jourdan2023}
Marc Jourdan, Degenne R{\'e}my, and Kaufmann Emilie.
\newblock Dealing with unknown variances in best-arm identification.
\newblock In \emph{Proceedings of The 34th International Conference on
  Algorithmic Learning Theory}, volume 201, pp.\  776--849, 2023.

\bibitem[Karlan \& Wood(2014)Karlan and Wood]{Karlan2014}
Dean Karlan and Daniel~H Wood.
\newblock The effect of effectiveness: Donor response to aid effectiveness in a
  direct mail fundraising experiment.
\newblock Working Paper 20047, National Bureau of Economic Research, April
  2014.

\bibitem[Kasy \& Sautmann(2021)Kasy and Sautmann]{Kasy2021}
Maximilian Kasy and Anja Sautmann.
\newblock Adaptive treatment assignment in experiments for policy choice.
\newblock \emph{Econometrica}, 89\penalty0 (1):\penalty0 113--132, 2021.

\bibitem[Kato et~al.(2020)Kato, Ishihara, Honda, and Narita]{Kato2020adaptive}
Masahiro Kato, Takuya Ishihara, Junya Honda, and Yusuke Narita.
\newblock Efficient adaptive experimental design for average treatment effect
  estimation, 2020.
\newblock {a}rXiv:2002.05308.

\bibitem[Kato et~al.(2023{\natexlab{a}})Kato, Imaizumi, Ishihara, and
  Kitagawa]{Kato2023minimax}
Masahiro Kato, Masaaki Imaizumi, Takuya Ishihara, and Toru Kitagawa.
\newblock Asymptotically minimax optimal fixed-budget best arm identification
  for expected simple regret minimization, 2023{\natexlab{a}}.
\newblock {a}rXiv:2302.02988.

\bibitem[Kato et~al.(2023{\natexlab{b}})Kato, Imaizumi, Ishihara, and
  Kitagawa]{kato2023fixedbudgethyp}
Masahiro Kato, Masaaki Imaizumi, Takuya Ishihara, and Toru Kitagawa.
\newblock Fixed-budget hypothesis best arm identification: On the information
  loss in experimental design.
\newblock In \emph{ICML Workshop on New Frontiers in Learning, Control, and
  Dynamical Systems}, 2023{\natexlab{b}}.

\bibitem[Kaufmann(2020)]{kaufmann2020hdr}
Emilie Kaufmann.
\newblock \emph{Contributions to the Optimal Solution of Several Bandits
  Problems}.
\newblock Habilitation \'a Diriger des Recherches, Universit\'e de Lille, 2020.
\newblock URL \url{https://emiliekaufmann.github.io/HDR_EmilieKaufmann.pdf}.

\bibitem[Kaufmann \& Koolen(2021)Kaufmann and Koolen]{Kaufmann2021}
Emilie Kaufmann and Wouter~M. Koolen.
\newblock Mixture martingales revisited with applications to sequential tests
  and confidence intervals.
\newblock \emph{Journal of Machine Learning Research}, 22\penalty0
  (246):\penalty0 1--44, 2021.

\bibitem[Kaufmann et~al.(2016)Kaufmann, Capp{{\'e}}, and
  Garivier]{Kaufman2016complexity}
Emilie Kaufmann, Olivier Capp{{\'e}}, and Aur{{\'e}}lien Garivier.
\newblock On the complexity of best-arm identification in multi-armed bandit
  models.
\newblock \emph{Journal of Machine Learning Research}, 17\penalty0
  (1):\penalty0 1--42, 2016.

\bibitem[Komiyama et~al.(2021)Komiyama, Ariu, Kato, and Qin]{Komiyama2021}
Junpei Komiyama, Kaito Ariu, Masahiro Kato, and Chao Qin.
\newblock Optimal simple regret in bayesian best arm identification, 2021.

\bibitem[Komiyama et~al.(2022)Komiyama, Tsuchiya, and Honda]{Komiyama2022}
Junpei Komiyama, Taira Tsuchiya, and Junya Honda.
\newblock Minimax optimal algorithms for fixed-budget best arm identification.
\newblock In \emph{Advances in Neural Information Processing Systems}, 2022.

\bibitem[Lai \& Robbins(1985)Lai and Robbins]{Lai1985}
T.L Lai and Herbert Robbins.
\newblock Asymptotically efficient adaptive allocation rules.
\newblock \emph{Advances in Applied Mathematics}, 1985.

\bibitem[Lalitha et~al.(2023)Lalitha, Kalantari, Ma, Deoras, and
  Kveton]{lalitha2023fixedbudget}
Anusha Lalitha, Kousha Kalantari, Yifei Ma, Anoop Deoras, and Branislav Kveton.
\newblock Fixed-budget best-arm identification with heterogeneous reward
  variances.
\newblock In \emph{Conference on Uncertainty in Artificial Intelligence}, 2023.

\bibitem[Lattimore \& Szepesv{\'a}ri(2020)Lattimore and
  Szepesv{\'a}ri]{lattimore2020bandit}
T.~Lattimore and C.~Szepesv{\'a}ri.
\newblock \emph{Bandit Algorithms}.
\newblock Cambridge University Press, 2020.

\bibitem[Le~Cam(1960)]{lecam1960locally}
L~Le~Cam.
\newblock \emph{Locally Asymptotically Normal Families of Distributions.
  Certain Approximations to Families of Distributions and Their Use in the
  Theory of Estimation and Testing Hypotheses}.
\newblock University of California Publications in Statistics. vol. 3. no. 2.
  Berkeley \& Los Angeles, 1960.

\bibitem[Le~Cam(1972)]{LeCam1972}
L~Le~Cam.
\newblock Limits of experiments.
\newblock In \emph{Theory of Statistics}, pp.\  245--282. University of
  California Press, 1972.

\bibitem[Le~Cam(1986)]{LeCam1986}
Lucien Le~Cam.
\newblock \emph{Asymptotic Methods in Statistical Decision Theory (Springer
  Series in Statistics)}.
\newblock Springer, 1986.

\bibitem[Lu et~al.(2021)Lu, Tao, and Zhang]{Lu2021}
Pinyan Lu, Chao Tao, and Xiaojin Zhang.
\newblock Variance-dependent best arm identification.
\newblock In \emph{Proceedings of the Thirty-Seventh Conference on Uncertainty
  in Artificial Intelligence}, volume 161, pp.\  1120--1129, 2021.

\bibitem[Manski(2000)]{Manski2000}
Charles Manski.
\newblock Identification problems and decisions under ambiguity: Empirical
  analysis of treatment response and normative analysis of treatment choice.
\newblock \emph{Journal of Econometrics}, 95\penalty0 (2):\penalty0 415--442,
  2000.

\bibitem[Manski(2002)]{Manski2002}
Charles~F. Manski.
\newblock Treatment choice under ambiguity induced by inferential problems.
\newblock \emph{Journal of Statistical Planning and Inference}, 105\penalty0
  (1):\penalty0 67--82, 2002.

\bibitem[Manski(2004)]{Manski2004}
Charles~F. Manski.
\newblock Statistical treatment rules for heterogeneous populations.
\newblock \emph{Econometrica}, 72\penalty0 (4):\penalty0 1221--1246, 2004.

\bibitem[Nair(2019)]{Nair2019}
Brijesh Nair.
\newblock Clinical trial designs.
\newblock \emph{Indian Dermatol. Online J.}, 10\penalty0 (2):\penalty0
  193--201, March 2019.

\bibitem[Neyman(1923)]{Neyman1923}
Jerzy Neyman.
\newblock Sur les applications de la theorie des probabilites aux experiences
  agricoles: Essai des principes.
\newblock \emph{Statistical Science}, 5:\penalty0 463--472, 1923.

\bibitem[Neyman(1934)]{Neyman1934OnTT}
Jerzy Neyman.
\newblock On the two different aspects of the representative method: the method
  of stratified sampling and the method of purposive selection.
\newblock \emph{Journal of the Royal Statistical Society}, 97:\penalty0
  123--150, 1934.

\bibitem[Paulson(1964)]{Paulson1964}
Edward Paulson.
\newblock {A Sequential Procedure for Selecting the Population with the Largest
  Mean from $k$ Normal Populations}.
\newblock \emph{The Annals of Mathematical Statistics}, 1964.

\bibitem[Peirce \& Jastrow(1884)Peirce and Jastrow]{Peirce1884}
C.~S. Peirce and Joseph Jastrow.
\newblock On small differences in sensation.
\newblock \emph{Memoirs of the National Academy of Sciences}, 3:\penalty0
  75--83, 1884.

\bibitem[Peirce \& de~Waal(1887)Peirce and de~Waal]{Peirce1887}
Charles~Sanders Peirce and Cornelis de~Waal.
\newblock \emph{Illustrations of the Logic of Science}.
\newblock Chicago, Illinois: Open Court, 1887.

\bibitem[Pong \& Chow(2016)Pong and Chow]{pong}
A.~Pong and S.-C Chow.
\newblock \emph{Handbook of adaptive designs in pharmaceutical and clinical
  development}.
\newblock Chapman and Hall/CRC, 04 2016.

\bibitem[Qin(2022)]{Qin2022open}
Chao Qin.
\newblock Open problem: Optimal best arm identification with fixed-budget.
\newblock In \emph{Conference on Learning Theory}, 2022.

\bibitem[Qin et~al.(2017)Qin, Klabjan, and Russo]{Qin2017}
Chao Qin, Diego Klabjan, and Daniel Russo.
\newblock Improving the expected improvement algorithm.
\newblock In \emph{Advances in Neural Information Processing Systems},
  volume~30. Curran Associates, Inc., 2017.

\bibitem[Robbins(1952)]{Robbins1952}
Herbert Robbins.
\newblock {Some aspects of the sequential design of experiments}.
\newblock \emph{Bulletin of the American Mathematical Society}, 1952.

\bibitem[Rubin(1974)]{Rubin1974}
Donald~B. Rubin.
\newblock Estimating causal effects of treatments in randomized and
  nonrandomized studies.
\newblock \emph{Journal of Educational Psychology}, 1974.

\bibitem[Russo(2020)]{Russo2016}
Daniel Russo.
\newblock Simple bayesian algorithms for best-arm identification.
\newblock \emph{Operations Research}, 68\penalty0 (6):\penalty0 1625--1647,
  2020.

\bibitem[Sauro(2020)]{sauro2020rapidly}
Luigi Sauro.
\newblock Rapidly finding the best arm using variance.
\newblock In \emph{European Conference on Artificial Intelligence}, 2020.

\bibitem[Shang et~al.(2020)Shang, de~Heide, Menard, Kaufmann, and
  Valko]{Shang2020}
Xuedong Shang, Rianne de~Heide, Pierre Menard, Emilie Kaufmann, and Michal
  Valko.
\newblock Fixed-confidence guarantees for bayesian best-arm identification.
\newblock In \emph{International Conference on Artificial Intelligence and
  Statistics}, volume 108, pp.\  1823--1832, 2020.

\bibitem[Shin et~al.(2018)Shin, Broadie, and Zeevi]{Shin2018}
Dongwook Shin, Mark Broadie, and Assaf Zeevi.
\newblock Tractable sampling strategies for ordinal optimization.
\newblock \emph{Operations Research}, 66\penalty0 (6):\penalty0 1693--1712,
  2018.

\bibitem[Tabord-Meehan(2022)]{Meehan2022}
Max Tabord-Meehan.
\newblock {Stratification Trees for Adaptive Randomization in Randomized
  Controlled Trials}.
\newblock \emph{The Review of Economic Studies}, 12 2022.

\bibitem[Thompson(1933)]{Thompson1933}
William~R Thompson.
\newblock On the likelihood that one unknown probability exceeds another in
  view of the evidence of two samples.
\newblock \emph{Biometrika}, 1933.

\bibitem[van~der Laan(2008)]{Laan2008TheCA}
Mark~J. van~der Laan.
\newblock The construction and analysis of adaptive group sequential designs,
  2008.
\newblock URL \url{https://biostats.bepress.com/ucbbiostat/paper232}.

\bibitem[van~der Vaart(1991)]{Vaart1991}
A.W. van~der Vaart.
\newblock An asymptotic representation theorem.
\newblock \emph{International Statistical Review / Revue Internationale de
  Statistique}, 59\penalty0 (1):\penalty0 97--121, 1991.

\bibitem[van~der Vaart(1998)]{Vaart1998}
A.W. van~der Vaart.
\newblock \emph{Asymptotic Statistics}.
\newblock Cambridge Series in Statistical and Probabilistic Mathematics.
  Cambridge University Press, 1998.

\bibitem[Wager \& Xu(2021)Wager and Xu]{Wager2021diffusion}
Stefan Wager and Kuang Xu.
\newblock Diffusion asymptotics for sequential experiments, 2021.
\newblock {a}rXiv:2101.09855.

\bibitem[Wald(1945)]{Wald1945}
A.~Wald.
\newblock Sequential tests of statistical hypotheses.
\newblock \emph{The Annals of Mathematical Statistics}, 16\penalty0
  (2):\penalty0 117--186, 1945.

\bibitem[Wald(1949)]{Wald1949}
Abraham Wald.
\newblock {Statistical Decision Functions}.
\newblock \emph{The Annals of Mathematical Statistics}, 20\penalty0
  (2):\penalty0 165 -- 205, 1949.

\bibitem[Wang et~al.(2023)Wang, Ariu, and Proutiere]{wang2023uniformly}
Po-An Wang, Kaito Ariu, and Alexandre Proutiere.
\newblock On uniformly optimal algorithms for best arm identification in
  two-armed bandits with fixed budget, 2023.
\newblock {a}rXiv:2308.12000.

\bibitem[Wieand(1976)]{Wieand1976}
Harry~S. Wieand.
\newblock {A Condition Under Which the Pitman and Bahadur Approaches to
  Efficiency Coincide}.
\newblock \emph{The Annals of Statistics}, 4\penalty0 (5):\penalty0 1003 --
  1011, 1976.

\end{thebibliography}

\onecolumn

\appendix

\section{Related Work}
\label{sec:related}
This section introduces related work.

\subsection{Historical Background of Ordinal Optimization and BAI}
Researchers have acknowledged the importance of statistical inference and experimental approaches as essential scientific tools \citep{Peirce1884,Peirce1887}. With the advancement of these statistical methodologies, experimental design also began attracting attention. \citet{Fisher1935} develops the groundwork for the principles of experimental design. \citet{Wald1949} establishes fundamental theories for statistical decision-making, bridging statistical inference and decision-making. These methodologies have been investigated across various disciplines, such as medicine, epidemiology, economics, operations research, and computer science, transcending their origins in statistics.

\paragraph{Ordinal Optimization.}
Ordinal optimization involves sample allocation to each arm and selects a certain arm based on a decision-making criterion; therefore, this problem is also known as the optimal computing budget allocation problem. The development of ordinal optimization is closely related to ranking and selection problems in simulation, originating from agricultural and clinical applications in the 1950s \citep{gupta1956decision,Bechhofer1954,Paulson1964,Branke2007,Hong_2021}. A modern formulation of ordinal optimization was established in the early 2000s \citep{chen2000,glynn2004large}. Existing research has found that the probability of misidentification converges at an exponential rate for a large set of problems. By employing large deviation principles \citep{Cramer1938,Ellis1984,Gartner1977,Dembo2009large}, \citet{glynn2004large} propose asymptotically optimal algorithms for ordinal optimization. 

\paragraph{BAI.}
A promising idea for enhancing the efficiency of strategies is adaptive experimental design. In this approach, information from past trials can be utilized to optimize the allocation of samples in subsequent trials. The concept of adaptive experimental design dates back to the 1970s \cite{pong}. Presently, its significance is acknowledged \citep{guide,ChowChang201112}. Adaptive strategies have also been studied within the domain of machine learning, and the multi-armed bandit (MAB) problem \citep{Thompson1933,Robbins1952,Lai1985} is an instance. The Best Arm Identification (BAI) is a paradigm of this problem \citep{EvanDar2006,Audibert2010,Bubeck2011}, influenced by sequential testing, ranking, selection problems, and ordinal optimization \citep{bechhofer1968sequential}. There are two formulations in BAI: fixed-confidence \citep{Garivier2016} and fixed-budget BAI. In the former, the sample size (budget) is a random variable, and a decision-maker stops an experiment when a certain criterion is satisfied, similar to sequential testing \citet{Wald1945,Chernoff1959}. In contrast, the latter fixes the sample size (budget) and minimizes a certain criterion given the sample size. BAI in this study corresponds to fixed-budget BAI \citep{Bubeck2011,Audibert2010,Bubeck2011}. There is no strict distinction between ordinal optimization and BAI.

\subsection{Optimal Strategies for BAI}
We introduce arguments about optimal strategies for BAI.

\paragraph{Optimal Strategies for Fixed-Confidence BAI.} In fixed-confidence BAI, several optimal strategies have been proposed, whose expected stopping time aligns with lower bounds shown by \citet{Kaufman2016complexity}. One of the remarkable studies is \citet{Garivier2016}, which proposes an optimal strategy called Track-and-Stop by extending the Chernoff stopping rule. The Track-and-Stop strategy is refined and extended by the following studies, including \citet{Kaufmann2021}, \citet{Jourdan2022}, and \citet{Jourdan2023}. From a Bayesian perspective, \citet{Russo2016}, \citet{Qin2017}, and \citet{Shang2020} propose Bayesian BAI strategies that are optimal in terms of posterior convergence rate.

\paragraph{Optimal Strategies for Fixed-Budget BAI.}
Fixed-budget BAI has also been extensively studied, but the asymptotic optimality has open issues. \citet{Kaufman2016complexity} and \citet{Carpentier2016} conjecture lower bounds. \citet{Garivier2016}, \citet{kaufmann2020hdr}, \citet{Ariu2021}, and \citet{Qin2022open} discuss and summarize the problem. In parallel with our study, \citet{Komiyama2022}, \citet{degenne2023existence}, \citet{atsidakou2023bayesian}, and \citet{wang2023uniformly} further discuss the problem.

Instead of a tight evaluation of the probability of misidentification, several studies focus on evaluating the expected simple regret. \citet{Bubeck2011} discusses the optimality of uniform allocation. \citet{Kato2023minimax} demonstrates the asymptotic optimality of a variance-dependent strategy. \citet{Komiyama2021} develops a Bayes optimal strategy. \citet{Kato2023minimax} shows that variance-dependent allocation improves the expected simple regret compared to uniform allocation in \citet{Bubeck2011}. However, there is a constant gap between the lower and upper bounds in \citet{Kato2023minimax}, and it is still unknown whether allocation rules in optimal strategies depend on variances when we consider tighter evaluation in the probability of misidentification.

\subsection{Complexity of strategies and Bahadur efficiency}
\label{appdx:complex}
The complexity $-\frac{1}{T}\log \mathbb{P}_{ P_0 }(\widehat{a}^\pi_T \neq a^*)$, has been widely adopted in the literature of ordinal optimization and BAI \citep{glynn2004large,Kaufman2016complexity}. In the field of hypothesis testing, \citet{Bahadur1960} suggests the use of a similar measure to assess statistics in hypothesis testing. The efficiency of a test under the criterion proposed by \citet{Bahadur1960} is known as Bahadur efficiency, and the complexity is referred to as the Bahadur slope. Although our problem is not hypothesis testing, it can be considered that our asymptotic optimality of strategies corresponds to the concept of Bahadur efficiency. Moreover, our global asymptotic optimality parallels global Bahadur efficiency.

\subsection{Efficient Average Treatment Effect Estimation}
Efficient estimation of ATE via adaptive experiments constitutes another area of related literature. \citet{Laan2008TheCA} and \citet{Hahn2011} propose experimental design methods for more efficient estimation of ATE by utilizing covariate information in treatment assignment. Despite the marginalization of covariates, their methods are able to reduce the asymptotic variance of estimators. \citet{Karlan2014} applies the method of \citet{Hahn2011} to examine the response of donors to new information regarding the effectiveness of a charity. Subsequently, \citet{Meehan2022} and \citet{Kato2020adaptive} have sought to improve upon these studies, and more recently, \citet{gupta2021efficient} has proposed the use of instrumental variables in this context.

\subsection{Other related work.}
Our problem has close ties to theories of statistical decision-making \citep{Wald1949,Manski2000,Manski2002,Manski2004}, limits of experiments \citep{LeCam1972,Vaart1998}, and semiparametric theory \citep{hahn1998role}. The semiparametric theory is particularly crucial as it enables the characterization of lower bounds through the semiparametric analog of Fisher information \citep{Vaart1998}.

\citet{adusumilli2022minimax,Adusumilli2021risk} present a minimax evaluation of bandit strategies for both regret minimization and BAI, which is based on a formulation utilizing a diffusion process proposed by \citet{Wager2021diffusion}. Furthermore, \citet{Armstrong2022} and \citet{hirano2023asymptotic} extend the results of \citet{Hirano2009} to a setting of adaptive experiments. The results of \citet{adusumilli2022minimax,Adusumilli2021risk} and \citet{Armstrong2022} employ arguments on local asymptotic normality \citep{lecam1960locally,LeCam1972,LeCam1986,Vaart1991,Vaart1998}, where the class of alternative hypotheses comprises "local models," in which parameters of interest converge to true parameters at a rate of $1/\sqrt{T}$

Variance-dependent strategies have garnered attention in BAI. In fixed-confidence BAI, \citet{Jourdan2023} provides a detailed discussion about BAI strategies with unknown variances. There are also studies using variances in strategies, including \citet{sauro2020rapidly}, \citet{Lu2021}, and \citet{lalitha2023fixedbudget}. However, they do not discuss optimality based on the arguments of \citet{Kaufman2016complexity}. In fact, our proposed strategy differs from the one in \citet{lalitha2023fixedbudget}, and our results imply that at least the worst-case optimal strategy is not the one in \citet{lalitha2023fixedbudget}.

Extending our result, \citet{kato2023fixedbudgethyp} discusses that we can characterize the lower bounds by using the variance when the gaps between the best and suboptimal arms approach zero (small-gap regime). This is because we can approximate the KL divergence of a wide range of distributions by the semiparametric influence function (a semiparametric analog of the Fisher information in parametric models) under the small-gap regime. Based on this finding, \citet{kato2023fixedbudgethyp} points out that under the small-gap regime, 
\begin{itemize}[topsep=0pt, itemsep=0pt, partopsep=0pt, leftmargin=*]
    \item when $K = 2$, and outcomes follow a distribution in the two-armed one-parameter exponential family, the uniform allocation is asymptotically optimal for consistent strategies.
    \item when $K \geq 3$, and outcomes follow a distribution in the two-armed one-parameter exponential family, the uniform allocation is asymptotically optimal for consistent and asymptotically invariant strategies.
\end{itemize}
This finding corresponds to a refinement of a comment by \citet{Kaufman2016complexity}. 

Furthermore, these observations are further related to \citet{wang2023uniformly}, which supports the optimality of the uniform allocation.

\section{Proof of Lemma~\ref{thm:global}}
\label{appdx:global}
\begin{proof}[Proof of Lemma~\ref{thm:global}]
    From Lemma~\ref{prp:lowerbound_2arms} and \eqref{eq:asm_invariant} in Definition~\ref{def:asymp_inv}, there exists $w^\pi$ such that
\begin{align*}
    \limsup_{T\to\infty}-\frac{1}{T}\log \mathbb{P}_{ P_0 }(\widehat{a}^\pi_T \neq a^*) &\leq\inf_{Q\in\cup_{b\neq a^*}\mathcal{P}(b, \underline{\Delta}, \overline{\Delta})}\sum_{a\in[K]}w^\pi(a)\mathrm{KL}(Q^a, P^a_0).
\end{align*}
Then, we bound the probability as 
\begin{align*}
    \limsup_{T\to\infty}-\frac{1}{T}\log \mathbb{P}_{ P_0 }(\widehat{a}^\pi_T \neq a^*) \leq \sup_{w\in\mathcal{W}}\inf_{Q\in\cup_{b\neq a^*}\mathcal{P}(b, \underline{\Delta}, \overline{\Delta})}\sum_{a\in[K]}w(a)\mathrm{KL}(Q^a, P^a_0).
\end{align*}
Note that $w^\pi(a)$ is independent of $Q$. 
Because $\mathrm{KL}(Q^a, P^a_0)$ is given as $\frac{\left(\mu^a(Q) - \mu^a(P_0)\right)^2}{2\left(\sigma^a\right)^2}$ for $P, Q\in\cup_{b\neq a^*}\mathcal{P}(b, \underline{\Delta}, \overline{\Delta})$, we obtain 
\begin{align*}
    &\limsup_{T \to \infty} - \frac{1}{T}\log \mathbb{P}_{P_0}(\widehat{a}^\pi_T \neq a^*)\leq\sup_{w\in\mathcal{W}}\inf_{\stackrel{(\mu^b)\in\mathbb{R}^K:}{\argmax_{b\in[K]}\mu^b \neq a^*}}\sum_{a\in[K]}w(a)\frac{\left(\mu^a - \mu^a(P_0)\right)^2}{2\left(\sigma^a\right)^2}.
\end{align*}
Here, we have
\begin{align*}
    &\inf_{\stackrel{(\mu^b)\in\mathbb{R}^K:}{\argmax_{b\in[K]}\mu^b \neq a^*}}\sum_{a\in[K]}w(a)\frac{\left(\mu^a - \mu^a(P_0)\right)^2}{2\left(\sigma^a\right)^2}\\
    &= \min_{a\in[K]\backslash\{a^*\}}\inf_{\stackrel{(\mu^b)\in\mathbb{R}^K:}{\mu^a > \mu^{a^*}}}\sum_{a\in[K]}w(a)\frac{\left(\mu^a - \mu^a(P_0)\right)^2}{2\left(\sigma^a\right)^2}\\
    &= \min_{a\in[K]\backslash\{a^*\}}\inf_{\stackrel{(\mu^b)\in\mathbb{R}^K:}{\mu^a > \mu^{a^*},\ \mu^c = \mu^c(P_0)}}\sum_{a\in[K]}w(a)\frac{\left(\mu^a - \mu^a(P_0)\right)^2}{2\left(\sigma^a\right)^2}\\
    &= \min_{a\in[K]\backslash\{a^*\}}\inf_{\stackrel{(\mu^{a^*}, \mu^a)\in\mathbb{R}^K:}{\mu^a > \mu^{a^*}}}\left\{w(a^*)\frac{\left(\mu^{a^*} - \mu^{a^*}(P_0)\right)^2}{2\left(\sigma^{a^*}\right)^2} + w(a)\frac{\left(\mu^a - \mu^a(P_0)\right)^2}{2\left(\sigma^a\right)^2}\right\}\\
    &= \min_{a\in[K]\backslash\{a^*\}}\min_{\mu \in \left[\mu^a(P_0), \mu^{a^*}\right]}\left\{w(a^*)\frac{\left(\mu - \mu^{a^*}(P_0)\right)^2}{2\left(\sigma^{a^*}\right)^2} + w(a)\frac{\left(\mu - \mu^a(P_0)\right)^2}{2\left(\sigma^a\right)^2}\right\}.
\end{align*}
Then, by solving the optimization problem, we obtain 
\begin{align*}
    & \min_{a\in[K]\backslash\{a^*\}}\min_{\mu \in \left[\mu^a(P_0), \mu^{a^*}\right]}\left\{w(a^*)\frac{\left(\mu - \mu^{a^*}(P_0)\right)^2}{2\left(\sigma^{a^*}\right)^2} + w(a)\frac{\left(\mu - \mu^a(P_0)\right)^2}{2\left(\sigma^a\right)^2}\right\}\\
     &= \min_{a\in[K]\backslash\{a^*\}}\frac{\left(\mu^{a^*}(P_0) - \mu^a(P_0)\right)^2}{2\left(\frac{\big(\sigma^{a^*}\big)^2}{w(a^*)} + \frac{\big(\sigma^a\big)^2}{w(a)}\right)}.
\end{align*}
Thus, we complete the proof. 
\end{proof}

\section{Proofs of Theorem~\ref{thm:semipara_bandit_lower_bound_lsmodel_asymp_inv}}
\label{appdx:lower}
\begin{proof}
Because there exists $\overline{\Delta}$ such that $\mu^{a^*}(P_0) - \mu^a(P_0) \leq \overline{\Delta}$ for all $a\in[K]$, the lower bound is given as 
    \begin{align*}
        & \min_{a\in[K]\backslash\{a^*\}}\frac{\left(\Delta^a(P_0)\right)^2}{2\left(\frac{\big(\sigma^{a^*}\big)^2}{w(a^*)} + \frac{\big(\sigma^a\big)^2}{w(a)}\right)} \leq  \min_{a\in[K]\backslash\{a^*\}}\frac{\overline{\Delta}^2}{2\left(\frac{\big(\sigma^{a^*}\big)^2}{w(a^*)} + \frac{\big(\sigma^a\big)^2}{w(a)}\right)}.
    \end{align*}

Therefore, we consider solving
\begin{align*}
\max_{w \in \mathcal{W}}\min_{a\neq a^*} \frac{1}{\frac{\left(\sigma^{a^*}\right)^2}{w(a^*)} + \frac{\left(\sigma^a\right)^2}{w(a)}}.
\end{align*}

We consider maximising $R > 0$ such that $R \leq 1/\left\{\frac{\left(\sigma^{a^*}\right)^2}{w(a^*)} + \frac{\left(\sigma^a\right)^2}{w(a)}\right\}$ for all $a\in [K]\backslash\{a^*\}$ by optimizing $w \in\mathcal{W}$. That is, we consider the following non-linear programming: 
\begin{align*}
    &\max_{R > 0, \bm{w} = \{w(1),w(2)\dots,w(K)\} \in (0,1)^{K} }\ \ \ R\\
    \mathrm{s.t.}&\ \ \ R \left(\frac{\left(\sigma^{a^*}\right)^2}{w({a^*})} + \frac{\left(\sigma^a\right)^2}{w(a)}\right) \zeta - 1 \leq 0\qquad \forall a \in [K]\backslash\{a^*\},\\
    &\ \ \ \sum_{a\in[K]}w(a) - 1 = 0,\\
    &\ \ \ w(a) > 0 \qquad \forall a\in [K].
\end{align*}
The maximum of $R$ in the constraint optimization is equal to $\max_{w \in \mathcal{W}}\min_{a\neq a^*} \frac{1}{\frac{\left(\sigma^{a^*}\right)^2}{w(a^*)} + \frac{\left(\sigma^a\right)^2}{w(a)}}$. 

Then, for $(K-1)$ Lagrangian multiplies $\bm{\lambda} = \{\lambda^a\}_{a\in[K]\backslash\{a^*\}}$ and $\gamma$ such that $\lambda^a \leq 0$ and $\gamma \in \mathbb{R}$, we define the following Lagrangian function:
\begin{align*}
    &L(\bm{\lambda}, \bm{\gamma}; R, \bm{w}) = R + \sum_{a\in [K]\backslash \{a^*\}}\lambda^{a} \left\{R \left(\frac{\left(\sigma^{a^*}\right)^2}{w({a^*})} + \frac{\left(\sigma^a\right)^2}{w(a)}\right) - 1\right\} - \gamma\left\{\sum_{a\in[K]}w(a) - 1\right\}.
\end{align*}
Note that the objective ($R$) and constraints ($R \left(\frac{\left(\sigma^{a^*}\right)^2}{w(a^*)} + \frac{\left(\sigma^a\right)^2}{w(a)}\right)  - 1 \leq 0$ and $\sum_{a\in[K]}w(a) - 1 = 0$) are differentiable convex functions for $R$ and $\bm{w}$. Therefore, the global optimizer $R^\dagger$ and $\bm{w}^\dagger = \{w^\dagger(a)\} \in (0,1)^{KN}$ satisfies the KKT condition; that is, there are Lagrangian multipliers $\lambda^{a\dagger}$,  $\gamma^\dagger$, and $R^\dagger$ such that 
\begin{align}
\label{eq:cond11}
&1 +  \sum_{a\in[K]\backslash\{a^*\}}\lambda^{a\dagger}\left(\frac{\left(\sigma^{a^*}\right)^2}{w^\dagger(a^*)} + \frac{\left(\sigma^a\right)^2}{w^\dagger(a)}\right) = 0\\
\label{eq:cond21}
&-2\sum_{a\in [K]\backslash\{a^*\}}\lambda^{a\dagger}R^\dagger\frac{\left(\sigma^{a^*}\right)^2}{(w^\dagger(a^*))^2}  = \gamma^\dagger\\
\label{eq:cond41}
&-2\lambda^{a\dagger}R^\dagger\frac{\left(\sigma^a\right)^2}{(w^\dagger(a))^2} = \gamma^\dagger\qquad \forall a \in [K]\backslash\{a^*\}\\
\label{eq:cond31}
&\lambda^{a\dagger} \left\{R^\dagger\left(\frac{\left(\sigma^{a^*}\right)^2}{w^\dagger(a^*)} + \frac{\left(\sigma^a\right)^2}{w^\dagger(a)}\right) - 1\right\} = 0\qquad \forall a \in [K]\backslash\{a^*\}\\
&\gamma^\dagger \left\{\sum_{c\in[K]}w^\dagger(c) - 1\right\} = 0\nonumber\\
&\lambda^{a\dagger}\leq 0\qquad \forall a \in [K]\backslash\{a^*\}\nonumber.
\end{align}

Here, \eqref{eq:cond11} implies $\lambda^{a\dagger} < 0$ for some $a\in [K]\backslash \{a^*\}$. This is because if $\lambda^{a\dagger} = 0$ for all $a\in [K]\backslash \{a^*\}$, $1 + 0 = 1 \neq 0$.

With $\lambda^{a\dagger} < 0$, since $- \lambda^{a\dagger}R^\dagger\frac{\left(\sigma^a\right)^2}{(w^\dagger(a))^2} > 0$ for all $a\in[K]$, it follows that $\gamma^\dagger > 0$. This also implies that $\sum_{c\in[K]}w^{c\dagger} - 1 = 0$. 

Then, \eqref{eq:cond31} implies that 
\begin{align*}
    R^\dagger\left(\frac{\left(\sigma^{a^*}\right)^2}{w^\dagger(a^*)} + \frac{\left(\sigma^a\right)^2}{w^\dagger(a)}\right) = 1 \qquad \forall a \in [K]\backslash\{a^*\}.
\end{align*}
Therefore, we have
\begin{align}
\label{eq:cond511}
    \frac{\left(\sigma^a\right)^2}{w^\dagger(a)} =  \frac{\left(\sigma^b(P_0)\right)^2}{w^\dagger(b)} \qquad \forall a,b \in [K]\backslash\{a^*\}.
\end{align}
Let $\frac{\left(\sigma^a\right)^2}{w^\dagger(a)} =  \frac{\left(\sigma^b(P_0)\right)^2}{w^\dagger(b)} = \frac{1}{R^\dagger} - \frac{\left(\sigma^{a^*}\right)^2}{w^\dagger(a^*)} = U$. 
From \eqref{eq:cond511} and \eqref{eq:cond11}, 
\begin{align}
\label{eq:cond61}
    \sum_{b\in [K]\backslash\{a^*\}}\lambda^{b\dagger} = - \frac{1}{\frac{\left(\sigma^{a^*}\right)^2}{w^\dagger(a^*)} + U}
\end{align}
From \eqref{eq:cond21} and \eqref{eq:cond41}, we have
\begin{align}
\label{eq:cond71}
&\frac{\left(\sigma^{a^*}\right)^2}{(w^\dagger(a^*))^2}\sum_{b\in [K]\backslash\{a^*\}}\lambda^{b\dagger} = \lambda^{a\dagger}\frac{\left(\sigma^a\right)^2}{(w^\dagger(a))^2}\qquad \forall a \in [K]\backslash\{a^*\}.
\end{align}
From \eqref{eq:cond61} and \eqref{eq:cond71}, we have
\begin{align}
\label{eq:cond81}
    -\frac{\left(\sigma^{a^*}\right)^2}{(w^\dagger(a^*))^2} = \lambda^{a\dagger}\frac{\left(\sigma^a\right)^2}{(w^\dagger(a))^2}\left(\frac{\left(\sigma^{a^*}\right)^2}{w^\dagger(a^*)} + U\right)\qquad \forall a \in [K]\backslash\{a^*\}.
\end{align}
From \eqref{eq:cond11} and \eqref{eq:cond81}, we have
\begin{align*}
    w^\dagger(a^*) = \sqrt{\left(\sigma^{a^*}\right)^2\sum_{a\in[K]\backslash\{a^*\}}\frac{(w^\dagger(a))^2}{\left(\sigma^a\right)^2}}.
\end{align*}

In summary, we have the following KKT conditions:
\begin{align*}
&w^\dagger({a^*}) = \sqrt{\left(\sigma^{a^*}\right)^2\sum_{a\in[K]\backslash\{a^*\}}\frac{(w^\dagger({a})^2}{\left(\sigma^a\right)^2}}\\
&\frac{\left(\sigma^{a^*}\right)^2}{(w^\dagger(a^*))^2} = - \lambda^{a\dagger}\frac{\left(\sigma^a\right)^2}{(w^\dagger(a))^2}\left(\left(\frac{\left(\sigma^{a^*}\right)^2}{w^\dagger(a^*)} + \frac{\left(\sigma^a\right)^2}{w^\dagger(a)}\right)\right)\qquad \forall a \in [K]\backslash\{a^*\}\\
&-\lambda^{a\dagger}\frac{\left(\sigma^a\right)^2}{(w^\dagger(a))^2} = \widetilde{\gamma}^\dagger\qquad \forall a \in [K]\backslash\{a^*\}\\
&\frac{\left(\sigma^a\right)^2}{w^\dagger(a)}= \frac{1}{R^\dagger} - \frac{\left(\sigma^{a^*}\right)^2}{w^\dagger(a^*)} \qquad \forall a\in [K]\backslash\{a^*\} \\
&\sum_{a\in[K]}w^\dagger(a) = 1\nonumber\\
&\lambda^{a\dagger}\leq 0\qquad \forall a \in [K]\backslash\{a^*\},
\end{align*}
where $\widetilde{\gamma}^\dagger = \gamma^\dagger/2 R^\dagger$. From $w^\dagger(a^*) = \sqrt{\left(\sigma^{a^*}\right)^2\sum_{a\in[K]\backslash\{a^*\}}\frac{(w^\dagger(a))^2}{\left(\sigma^a\right)^2}}$ and $-\lambda^{a\dagger}\frac{\left(\sigma^a\right)^2}{(w^\dagger(a))^2} = \widetilde{\gamma}^\dagger$, we have
\begin{align*}
    &w^\dagger(a^*) = \sigma^{a^*}\sqrt{\sum_{a\in[K]\backslash\{a^*\}}-\lambda^{a\dagger}}/\sqrt{\widetilde{\gamma}^\dagger}\\
    &w^\dagger(a) = \sqrt{-\lambda^{a\dagger}/\widetilde{\gamma}^\dagger}\sigma^a.
\end{align*}
From $\sum_{a\in[K]}w^\dagger(a) = 1$, we have
\begin{align*}
\sigma^{a^*}\sqrt{\sum_{a\in[K]\backslash\{a^*\}}-\lambda^{a\dagger}}/\sqrt{\widetilde{\gamma}^\dagger} + \sum_{a\in[K]\backslash\{a^*\}}\sqrt{-\lambda^{a\dagger}/\widetilde{\gamma}^\dagger}\sigma^a = 1.
\end{align*}
Therefore, the following holds:
\begin{align*}
    \sqrt{\widetilde{\gamma}^\dagger} = \sigma^{a^*}\sqrt{\sum_{a\in[K]\backslash\{a^*\}}-\lambda^{a\dagger}} + \sum_{a\in[K]\backslash\{a^*\}}\sqrt{-\lambda^{a\dagger}}\sigma^a. 
\end{align*}
Hence, the target allocation ratio is computed as
\begin{align*}
    &w^\dagger(a^*) = \frac{\sigma^{a^*}\sqrt{\sum_{a\in[K]\backslash\{a^*\}}-\lambda^{a\dagger}}}{\sigma^{a^*}\sqrt{\sum_{a\in[K]\backslash\{a^*\}}-\lambda^{a\dagger}} + \sum_{a\in[K]\backslash\{a^*\}}\sqrt{-\lambda^{a\dagger}}\sigma^a}\\
    &w^\dagger(a) = \frac{\sqrt{-\lambda^{a\dagger}}\sigma^a}{\sigma^{a^*}\sqrt{\sum_{a\in[K]\backslash\{a^*\}}-\lambda^{a\dagger}} + \sum_{a\in[K]\backslash\{a^*\}}\sqrt{-\lambda^{a\dagger}}\sigma^a},
\end{align*}
where from $\frac{\left(\sigma^{a^*}\right)^2}{(w^\dagger(a^*))^2} = - \lambda^{a\dagger}\frac{\left(\sigma^a\right)^2}{(w^\dagger(a))^2}\left(\frac{\left(\sigma^{a^*}\right)^2}{w^\dagger(a^*)} + \frac{\left(\sigma^a\right)^2}{w^\dagger(a)}\right)$, $(\lambda^{a^*})_{a\in[K]\backslash\{a^*\}}$ satisfies,
\begin{align*}
    &\frac{1}{\sum_{a\in[K]\backslash\{a^*\}}-\lambda^{a\dagger}}\\
    &= \left(\frac{\sigma^{a^*}}{\sqrt{\sum_{a\in[K]\backslash\{a^*\}}-\lambda^{a\dagger}}} + \frac{\sigma^a}{\sqrt{-\lambda^{a\dagger}}}\right)\left(\sigma^{a^*}\sqrt{\sum_{c\in[K]\backslash\{a^*\}}-\lambda^{c\dagger}} + \sum_{c\in[K]\backslash\{a^*\}}\sqrt{-\lambda^{c\dagger}}\sigma^c_0\right)\\ 
    &= \left(\sigma^{a^*} + \frac{\sigma^a}{\sqrt{-\lambda^{a\dagger}}}\sqrt{\sum_{c\in[K]\backslash\{a^*\}}-\lambda^{c\dagger}}\right)\left(\sigma^{a^*} + \frac{\sum_{c\in[K]\backslash\{a^*\}}\sqrt{-\lambda^{c\dagger}}\sigma^c_0}{\sum_{c\in[K]\backslash\{a^*\}}-\lambda^{c\dagger}}\sqrt{\sum_{c\in[K]\backslash\{a^*\}}-\lambda^{c\dagger}}\right).
\end{align*}

Then, the following solutions satisfy the above KKT conditions: 
\begin{align*}
    R^\dagger & \left(\sigma^{a^*} + \sqrt{\sum_{b\in[K]\backslash\{a^*\}}\left(\sigma^b\right)^2}\right)^2 = 1\\
    w^\dagger(a^*) & = \frac{\sigma^{a^*}\sqrt{\sum_{b\in[K]\backslash\{a^*\}}\left(\sigma^b\right)^2}}{\sigma^{a^*}\sqrt{\sum_{b\in[K]\backslash\{a^*\}}\left(\sigma^b\right)^2} + \sum_{b\in[K]\backslash\{a^*\}}\left(\sigma^b\right)^2}\\
    w^\dagger(a) &= \frac{\left(\sigma^a\right)^2}{\sigma^{a^*}\sqrt{\sum_{b\in[K]\backslash\{a^*\}}\left(\sigma^b\right)^2} + \sum_{b\in[K]\backslash\{a^*\}}\left(\sigma^b\right)^2}\\
    \lambda^{a\dagger} &= - \left(\sigma^a\right)^2\\
    \gamma^\dagger &= 2\left(\sigma^a\right)^2.
\end{align*}
\end{proof}
Note that a target allocation ratio $w$ in the maximum corresponds to a limit of an expectation of allocation rule $\frac{1}{T}\sum^T_{t=1}\mathbbm{1}[A_t = a]$ from the definition of asymptotically invariant strategies.

\section{Proof of Theorem~\ref{thm:semipara_bandit_lower_bound_lsmodel_asymp_inv_equiv2}}
\label{appdx:equiv2}
\begin{proof}
Fix $P_0$ and $a^* = a^*(P_0)$.
From Lemma~1 in \citet{Kaufman2016complexity}, under any $Q\in\cup_{b \neq a^*}\mathcal{P}(b, \underline{\Delta}, \overline{\Delta})$, any consistent strategy satisfies
\begin{align*}
            \limsup_{T \to \infty} - \frac{1}{T}\log \mathbb{P}_{Q}(\widehat{a}^\pi_T \neq a^*)
    \leq \limsup_{T \to \infty}\sum_{a\in[K]} \kappa^\pi_{T, P_0}(a)\mathrm{KL}(P^a_0, Q^a).
\end{align*}
Therefore, we have
\begin{align*}
    &\inf_{Q\in\cup_{b \neq a^*}\mathcal{P}(b, \underline{\Delta}, \overline{\Delta})}\limsup_{T \to \infty} - \frac{1}{T}\log \mathbb{P}_{Q}(\widehat{a}^\pi_T \neq a^*)\leq \inf_{Q\in\cup_{b \neq a^*}\mathcal{P}(b, \underline{\Delta}, \overline{\Delta})}\limsup_{T \to \infty}\sum_{a\in[K]} \kappa^\pi_{T, P_0}(a)\mathrm{KL}(P^a_0, Q^a).
\end{align*}
Here, for Gaussian bandit models $P_0$ and $Q$, $\mathrm{KL}(P^a_0, Q^a) = \frac{\left(\mu^a(Q) - \mu^a(P_0)\right)^2}{2\left(\sigma^a\right)^2}$ holds. Therefore, 
\begin{align*}
    \inf_{Q\in\cup_{b \neq a^*}\mathcal{P}(b, \underline{\Delta}, \overline{\Delta})}\limsup_{T \to \infty} - \frac{1}{T}\log \mathbb{P}_{Q}(\widehat{a}^\pi_T \neq a^*(Q))&\leq \inf_{\stackrel{(\mu^b)\in\mathbb{R}^K:}{\argmax_{b\in[K]}\mu^b \neq a^*}}\limsup_{T \to \infty}\sum_{a\in[K]} \kappa^\pi_{T, P_0}(a)\frac{\left(\mu^a - \mu^a(P_0)\right)^2}{2\left(\sigma^a\right)^2},
\end{align*}
which yields
\begin{align*}
    \inf_{Q\in\cup_{b \neq a^*}\mathcal{P}(b, \underline{\Delta}, \overline{\Delta})}\limsup_{T \to \infty} - \frac{1}{T}\log \mathbb{P}_{Q}(\widehat{a}^\pi_T \neq a^*(Q))&\leq \min_{a\in[K]\backslash\{a^*\}}\frac{\overline{\Delta}^2}{2\left(\frac{\big(\sigma^{a^*}\big)^2}{\kappa^\pi_{T, P_0}(a^*)} + \frac{\big(\sigma^a\big)^2}{\kappa^\pi_{T, P_0}(a)}\right)}
\end{align*}
from the proof of Lemma~\ref{thm:global} (Appendix~\ref{appdx:global}).
Lastly, by taking $\min_{a^*\in[K]}$, we obtain
\begin{align*}
    \min_{a^*\in[K]}\inf_{Q\in\cup_{b \neq a^*}\mathcal{P}(b, \underline{\Delta}, \overline{\Delta})}\limsup_{T \to \infty} - \frac{1}{T}\log \mathbb{P}_{Q}(\widehat{a}^\pi_T \neq a^*(Q))&\leq \sup_{w\in\mathcal{W}}\min_{a^*\in[K]}\min_{a\in[K]\backslash\{a^*\}}\frac{\overline{\Delta}^2}{2\left(\frac{\big(\sigma^{a^*}\big)^2}{w(a^*)} + \frac{\big(\sigma^a\big)^2}{w(a)}\right)}.
\end{align*}
\end{proof}

\section{Proof of Theorem~\ref{thm:upper_pom}}
\label{appdx:ts_eba}
\begin{proof}[Proof of Theorem~\ref{thm:upper_pom}]
Note that the probability of misidentification can be written as
\[\mathbb{P}_{P_0}\left(\widehat{\mu}^{a^*}_t \leq \widehat{\mu}^{a}_t\right) = \mathbb{P}_{P_0}\left(\sum^T_{t=1}\left\{\Psi^{a^*}_t(P_0) + \Psi^{a}_t(P_0)\right\} \leq - T\Delta^a(P_0)\right),\] 
where 
\begin{align*}
    &\Psi^{a^*}_t(P_0) = \frac{\mathbbm{1}[A_t = a^*]\left\{Y^{a^*}_t - \mu^{a^*}(P_0)\right\}}{\widetilde{w}(a^*)},\\
    &\Psi^a_t(P_0) = - \frac{\mathbbm{1}[A_t = a]\left\{Y^a_t - \mu^{a}(P_0)\right\}}{\widetilde{w}(a)},
\end{align*}
and $\widetilde{w}(a) = \frac{1}{T}\sum^T_{t=1}\mathbbm{1}[A_t = a]$. Note that $\widetilde{w}(a)$ is a non-random variable by the definition of the strategy and converges to $w^{\mathrm{GNA}}(a)$ as $T\to \infty$.

By applying the Chernoff bound, for any $v < 0$ and any $\lambda < 0$,  we have 
\begin{align}
\label{eq:prob}
    &\mathbb{P}_{P_0}\left(\sum^T_{t=1}\left\{\Psi^{a^*}_t(P_0) + \Psi^{a}_t(P_0)\right\} \leq v\right)\nonumber\\
    &\leq \mathbb{E}_{P_0}\left[\exp\left(\lambda \sum^T_{t=1}\left\{\Psi^{a^*}_t(P_0) + \Psi^{a}_t(P_0)\right\}\right)\right]\exp\left(-\lambda v\right)\nonumber\\
    &= \mathbb{E}_{P_0}\left[\exp\left(\lambda \sum^T_{t=1}\Psi^{a^*}_t(P_0)\right)\right]\mathbb{E}_{P_0}\left[\exp\left(\lambda \sum^T_{t=1}\Psi^{a}_t(P_0)\right)\right]\exp\left(-\lambda v\right).
\end{align}

First, we consider $\mathbb{E}_{P_0}\left[\exp\left(\lambda \sum^T_{t=1}\Psi^{a^*}_t(P_0)\right)\right]$. 
Because $\Psi^{a^*}_1, \Psi^{a^*}_2,\dots, \Psi^{a^*}_T$ are i.i.d., we have
\begin{align*}
    &\mathbb{E}_{P_0}\left[\exp\left(\lambda \sum^T_{t=1}\Psi^{a^*}_t(P_0)\right)\right]=\prod^T_{t=1}\mathbb{E}_{P_0}\left[\exp\left(\lambda \Psi^{a^*}_t(P_0)\right)\right]= \exp\left(\sum^T_{t=1}\log \mathbb{E}_{P_0}\left[\exp\left(\lambda \Psi^{a^*}_t(P_0)\right)\right] \right).
\end{align*}

By applying the Taylor expansion around $\lambda = 0$,  we have
\[\mathbb{E}_{P_0}\left[\exp\left(\lambda \Psi^{a^*}_t(P_0)\right)\right] = 1 + \sum^\infty_{k=1}\lambda^k\mathbb{E}_{P_0}\left[(\Psi^{a^*}_t(P_0))^k / k!\right].\]
holds. 

Here, for $t \in [T]$ such that $A_t = a^*$,
\begin{align*}
&\mathbb{E}_{P_0}\left[\Psi^{a^*}_t(P_0)\right] = \mathbb{E}_{P_0}\left[\frac{1}{\widetilde{w}(a^*)}\Big\{Y^{a^*}_t - \mu^{a^*}(P_0)\Big\}\right] = 0,
\end{align*} 
and 
\begin{align*}
&\mathbb{E}_{P_0}\left[\left(\Psi^{a^*}_t(P_0)\right)^2\right] = \mathbb{E}_{P_0}\left[\frac{1}{(\widetilde{w}(a^*))^2}\Big\{Y^{a^*}_t - \mu^{a^*}(P_0)\Big\}^2\right] = \frac{\left(\sigma^{a^*}\right)^2}{(\widetilde{w}(a^*))^2}
\end{align*} 
hold. Additionally, $\mathbb{E}_{P_0}\left[\left(\Psi^{a^*}_t(P_0)\right)^k\right] = 0$ holds for $k\geq 3$ because $Y^{a^*}$ follows a Gaussian distribution.

For $t \in [T]$ such that $A_t \neq a^*$, $\mathbb{E}_{P_0}\left[\left(\Psi^{a^*}_t(P_0)\right)^k\right] = 0$ hold for $k\geq 1$. 

Note that the Taylor expansion of $\log(1 + z)$ around $z = 0$ is given as $\log(1 + z) = z - z^2/2 + z^3/3 - \cdots$. Therefore,  we have
\begin{align*}
    &\log \mathbb{E}_{P_0}\left[\exp\left(\lambda \Psi^{a^*}_t(P_0)\right)\right]\\
    &= \Big\{\lambda\mathbb{E}_{P_0}\left[\Psi^{a^*}_t(P_0)\right] + \lambda^2\mathbb{E}_{P_0}\left[(\Psi^{a^*}_t(P_0))^2 / 2!\right] + O\left(\lambda^3\right) \Big\} - \frac{1}{2}\left\{\lambda\mathbb{E}_{P_0}\left[\Psi^{a^*}_t(P_0)\right]\right\}^2\\
    &= \lambda^2\frac{\left(\sigma^{a^*}(P_0)\right)^2}{(\widetilde{w}(a^*))^2}.
\end{align*}
Thus, 
\begin{align*}
    &\sum^T_{t=1}\log \mathbb{E}_{P_0}\left[\exp\left(\lambda \Psi^{a^*}_t(P_0)\right)\right]= T\frac{\lambda^2}{2} \frac{\left(\sigma^{a^*}\right)^2}{\widetilde{w}(a^*)}
\end{align*}
holds. 

Similarly, 
\begin{align*}
    &\sum^T_{t=1}\log \mathbb{E}_{P_0}\left[\exp\left(\lambda \Psi^{a}_t(P_0)\right)\right]= T\frac{\lambda^2}{2} \frac{\left(\sigma^{a}\right)^2}{\widetilde{w}(a)}
\end{align*}
holds.

Therefore, from \eqref{eq:prob}, we have
\begin{align*}
    &\mathbb{P}_{P_0}\left(\sum^T_{t=1}\left\{\Psi^{a^*}_t(P_0) + \Psi^{a}_t(P_0)\right\} \leq v\right)\leq \exp\left(T\frac{\lambda^2}{2} \frac{\left(\sigma^{a^*}\right)^2}{\widetilde{w}(a^*)} + T\frac{\lambda^2}{2} \frac{\left(\sigma^{a}\right)^2}{\widetilde{w}(a)} -\lambda v\right).
\end{align*}

Let $v = T\lambda\Omega^{a^*, a}(w^{\mathrm{GNA}})$ and $\lambda = - \frac{\Delta^a(P_0)}{\Omega^{a^*, a}(\widetilde{w})}$. Then, we have
\begin{align*}
    &\mathbb{P}_{P_0}\left(\sum^T_{t=1}\left\{\Psi^{a^*}_t(P_0) + \Psi^{a}_t(P_0)\right\} \leq - T\Delta^a(P_0)\right)\leq  \exp\left(- \frac{T\left(\Delta^a(P_0)\right)^2}{2\Omega^{a^*, a}(\widetilde{w})}\right).
\end{align*}

Finally, we have
\begin{align*}
       &-\frac{1}{T}\log\mathbb{P}_{P_0}\left(\widehat{\mu}^{a^*}_{T} \leq \widehat{\mu}^{a}_{T}\right)\geq \frac{(\Delta^{a}(P_0))^2}{2\Omega^{a^*, a}(\widetilde{w})}.
    \end{align*}
Because $\widetilde{w} \to w^{\mathrm{GNA}}(a)$ as $T\to \infty$, we have
\begin{align*}
     &\liminf_{T \to \infty} - \frac{1}{T}\log \mathbb{P}_{P_0}(\widehat{a}^{\mathrm{EBA}}_T \neq a^*)   \geq \liminf_{T \to \infty} - \frac{1}{T}\log \sum_{a\neq a^*}\mathbb{P}_{P_0}(  \widehat{\mu}^{a^*}_{T}\leq \widehat{\mu}^{a}_{T}) 
     \\
     &\ge  \liminf_{T \to \infty} - \frac{1}{T}\log\left\{ (K-1) \max_{a\neq a^*} \mathbb{P}_{P_0}(  \widehat{\mu}^{a^*}_{T}\leq \widehat{\mu}^{a}_{T}) \right\}\geq  \min_{a\neq a^*} \frac{\left(\Delta^a(P_0)\right)^2}{2 \Omega^{a^*, a}(w^{\mathrm{GNA}})} \geq  \min_{a\neq a^*} \frac{\underline{\Delta}^2}{2 \Omega^{a^*, a}(w^{\mathrm{GNA}})}.
\end{align*}
Thus, the proof of Theorem~\ref{thm:upper_pom} is complete. 
\end{proof}

\section{Additional Experimental Results}
\label{appdx:exp} 
This section shows the experimental results discussed in Section~\ref{sec:exp_results}. For the details, see Section~\ref{sec:exp_results}.

\begin{figure}[ht]
  \centering
  \begin{minipage}[b]{0.3\linewidth}
    \includegraphics[height=100mm]{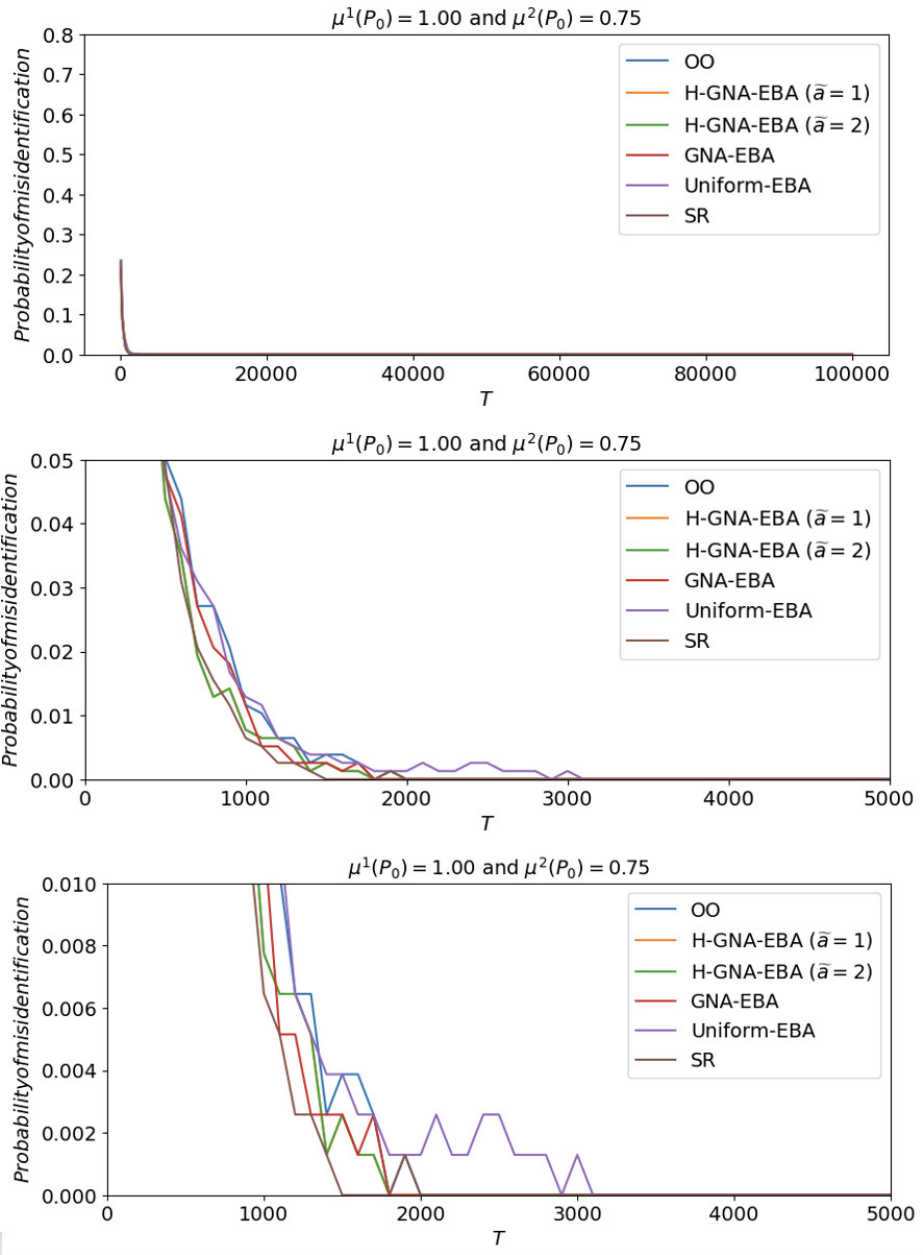}
    \caption*{$K = 2$}
  \end{minipage}
  \hfill 
  \begin{minipage}[b]{0.46\linewidth}
    \includegraphics[height=100mm]{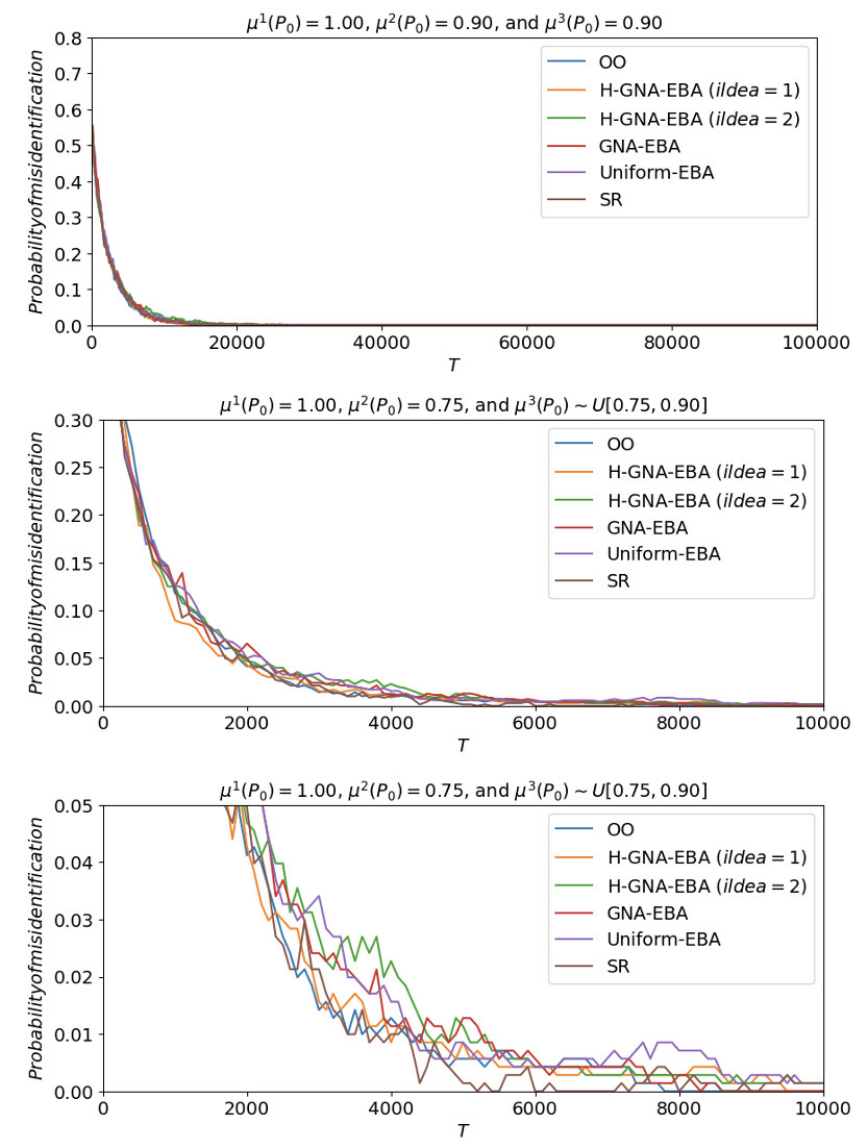}
    \caption*{$K = 3$}
  \end{minipage}
  \caption{Experimental results ($K=2$ and $K=3$). The left figure shows the result with $K=2$, and the right figure shows the result with $K=2$. The $y$-axis and $x$-axis denote the probability of misidentification and $T$, respectively. We show the same results with different three scales of the $y$-axis and $x$-axis for each $K=2$ and $K=3$.}
      \label{fig:synthetic_results1}
\end{figure}

\begin{figure}[ht]
  \centering
  \begin{minipage}[b]{0.45\linewidth}
    \includegraphics[height=100mm]{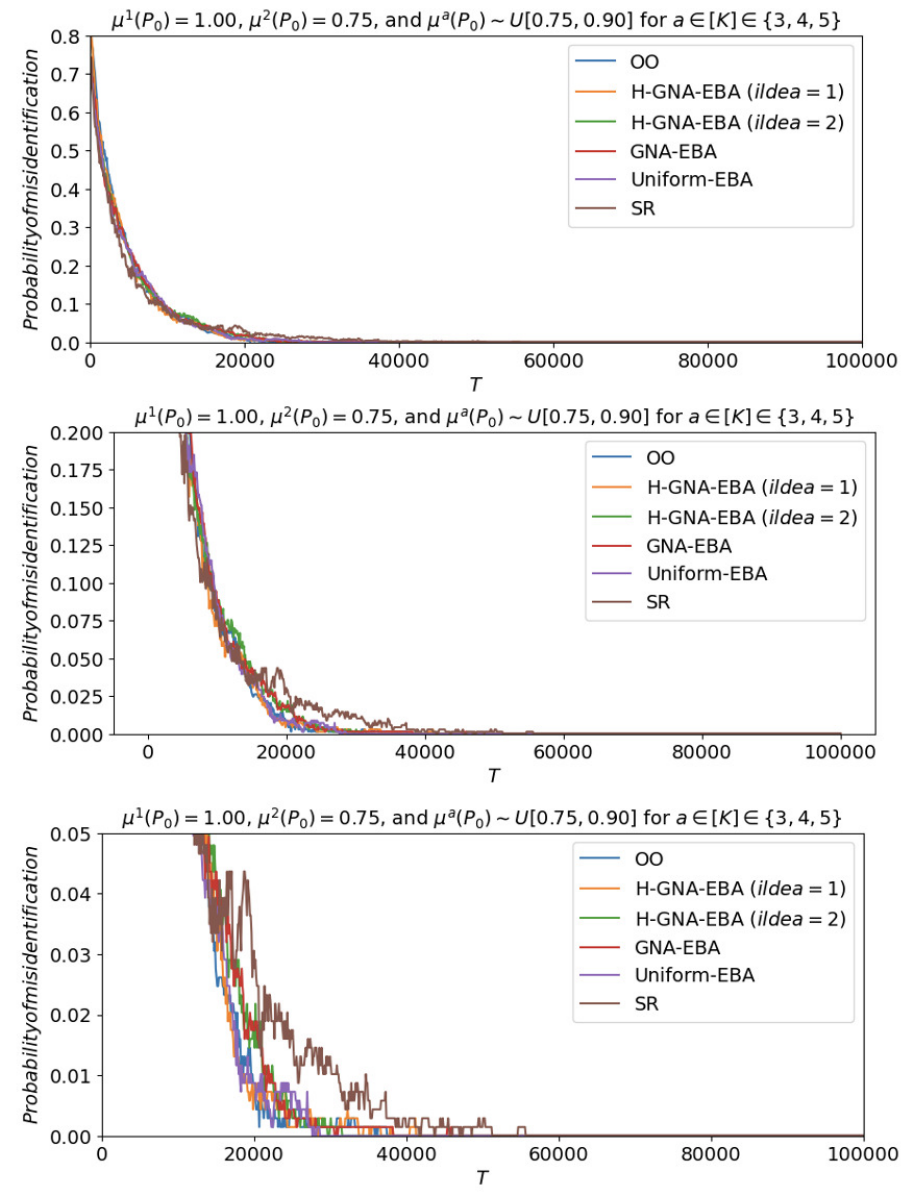}
    \caption*{$K = 5$}
  \end{minipage}
  \hfill 
  \begin{minipage}[b]{0.45\linewidth}
    \includegraphics[height=100mm]{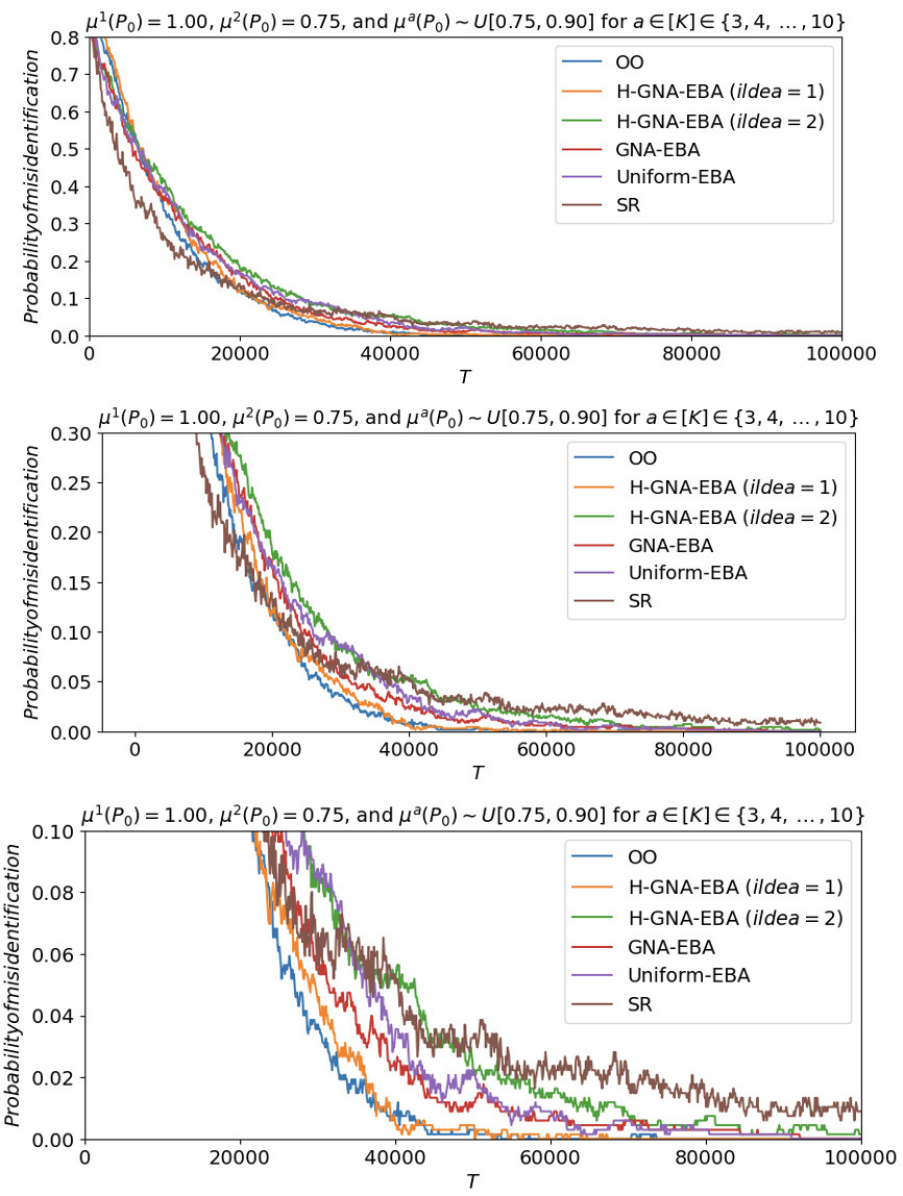}
    \caption*{$K = 10$}
  \end{minipage}
  \caption{Experimental results ($K=5$ and $K=10$). The left figure shows the result with $K=5$, and the right figure shows the result with $K=10$. The $y$-axis and $x$-axis denote the probability of misidentification and $T$, respectively. We show the same results with different three scales of the $y$-axis and $x$-axis for each $K=5$ and $K=10$.}
      \label{fig:synthetic_results2}
\end{figure}

\begin{figure}[ht]
  \centering
  \includegraphics[height=100mm]{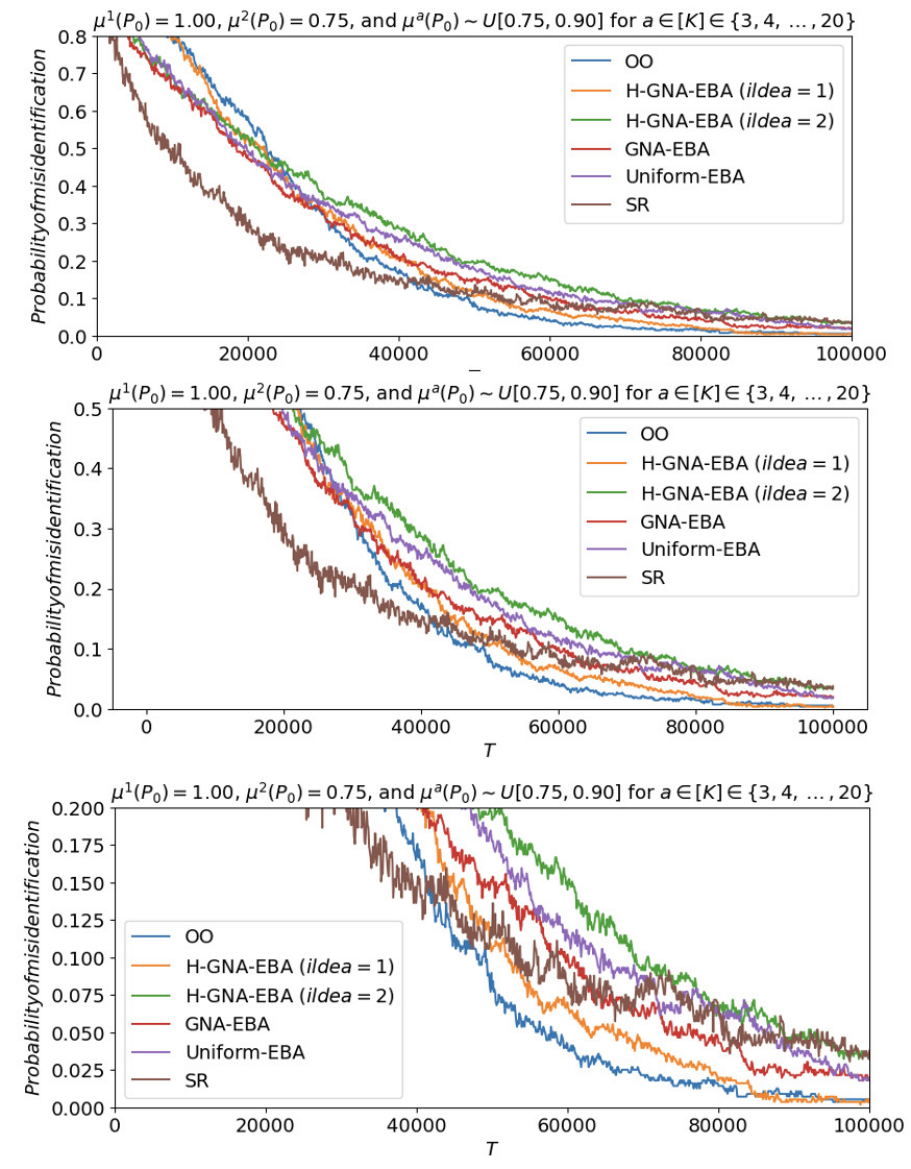}
  \caption*{$K=20$.}
  \caption{Experimental results ($K=20$). The $y$-axis and $x$-axis denote the probability of misidentification and $T$, respectively. We show the same results with different three scales of the $y$-axis and $x$-axis.}
      \label{fig:synthetic_results3}
\end{figure}

\begin{figure}[ht]
  \centering
  \includegraphics[height=60mm]{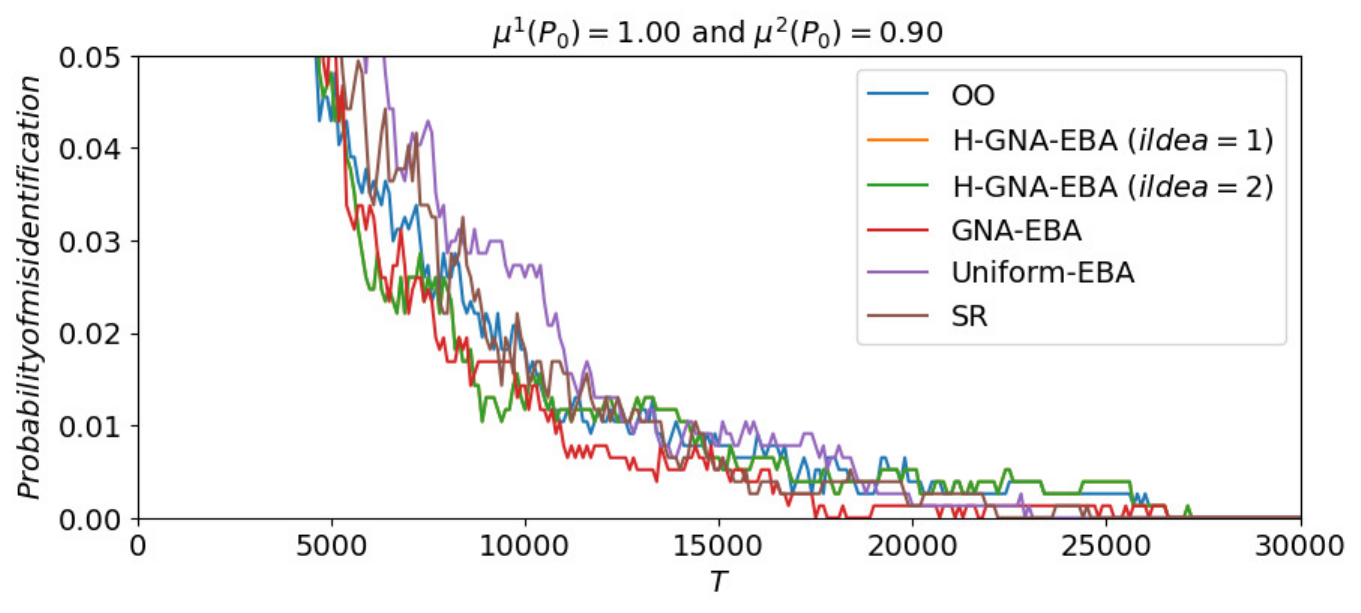}
  \caption*{$K=2$.}
  \caption{Experimental results. The best arm is arm $1$ and $\mu^1(P) = 1$. The expected outcomes of suboptimal arms $0.90$. The variances are drawn from a uniform distribution with support $[0.5, 10]$. We continue the strategies until $T = 100,000$. We conduct $100$ independent trials for each setting. For each $T\in\{100, 200, 300, \cdots, 99,900, 100,000\}$. The $y$-axis and $x$-axis denote the probability of misidentification and $T$, respectively.}
      \label{fig:synthetic_results7}
\end{figure}

\begin{figure}[ht]
  \centering
  \includegraphics[height=60mm]{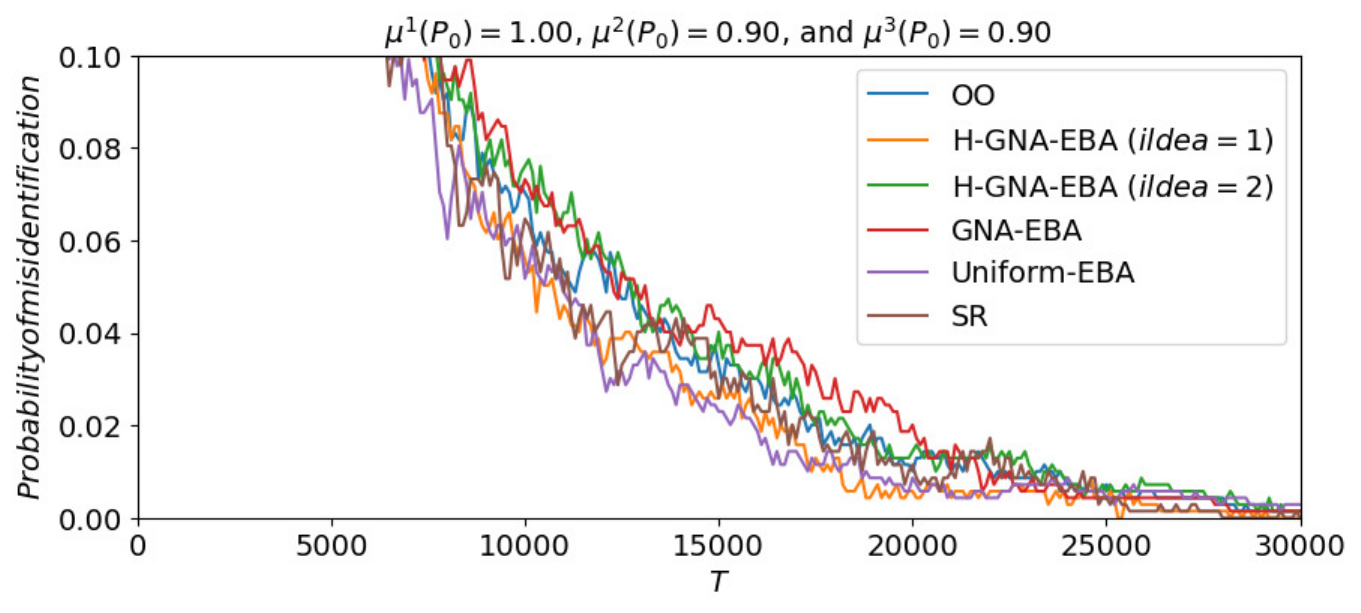}
  \caption*{$K=3$.}
  \caption{Experimental results. The best arm is arm $1$ and $\mu^1(P) = 1$. The expected outcomes of suboptimal arms $0.90$ ($\mu^a(P) = 0.90$ for all $a\in[K]\backslash\{1\}$). The variances are drawn from a uniform distribution with support $[0.5, 10]$. We continue the strategies until $T = 100,000$. We conduct $100$ independent trials for each setting. For each $T\in\{100, 200, 300, \cdots, 99,900, 100,000\}$. The $y$-axis and $x$-axis denote the probability of misidentification and $T$, respectively.}
      \label{fig:synthetic_results8}
\end{figure}

\begin{figure}[ht]
  \centering
  \includegraphics[height=60mm]{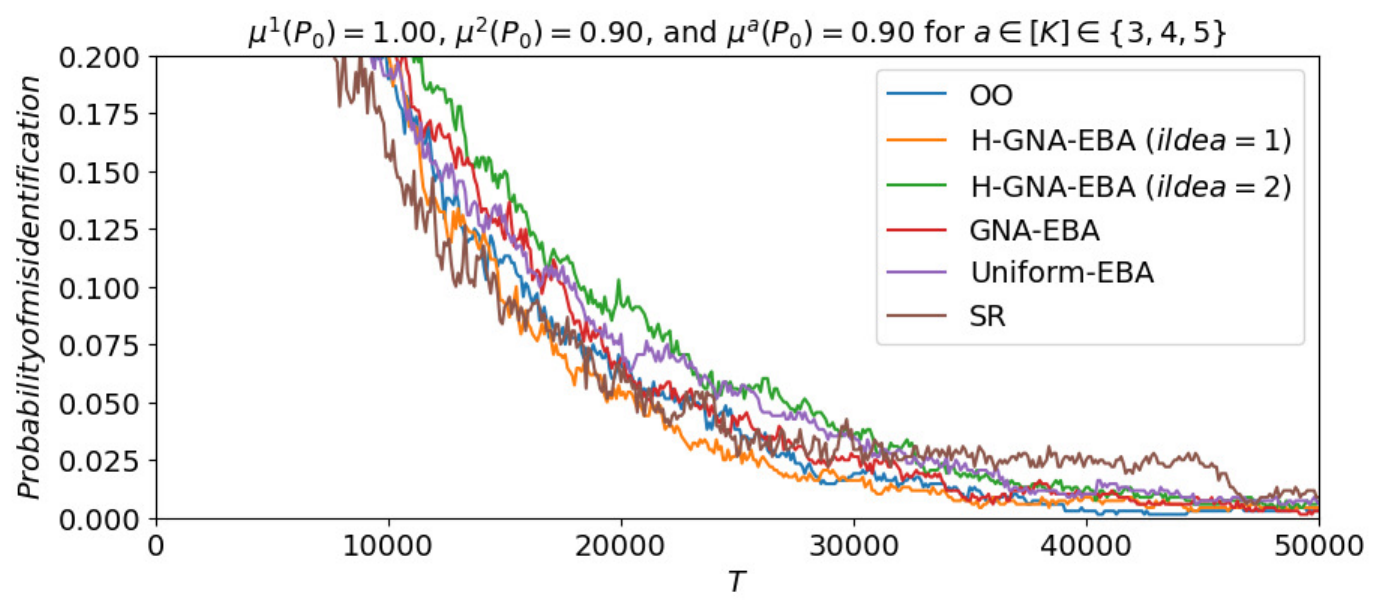}
  \caption*{$K=5$.}
  \caption{Experimental results. The best arm is arm $1$ and $\mu^1(P) = 1$. The expected outcomes of suboptimal arms $0.90$ ($\mu^a(P) = 0.90$ for all $a\in[K]\backslash\{1\}$). The variances are drawn from a uniform distribution with support $[0.5, 10]$. We continue the strategies until $T = 100,000$. We conduct $100$ independent trials for each setting. For each $T\in\{100, 200, 300, \cdots, 99,900, 100,000\}$. The $y$-axis and $x$-axis denote the probability of misidentification and $T$, respectively.}
      \label{fig:synthetic_results9}
\end{figure}

\begin{figure}[ht]
  \centering
  \includegraphics[height=60mm]{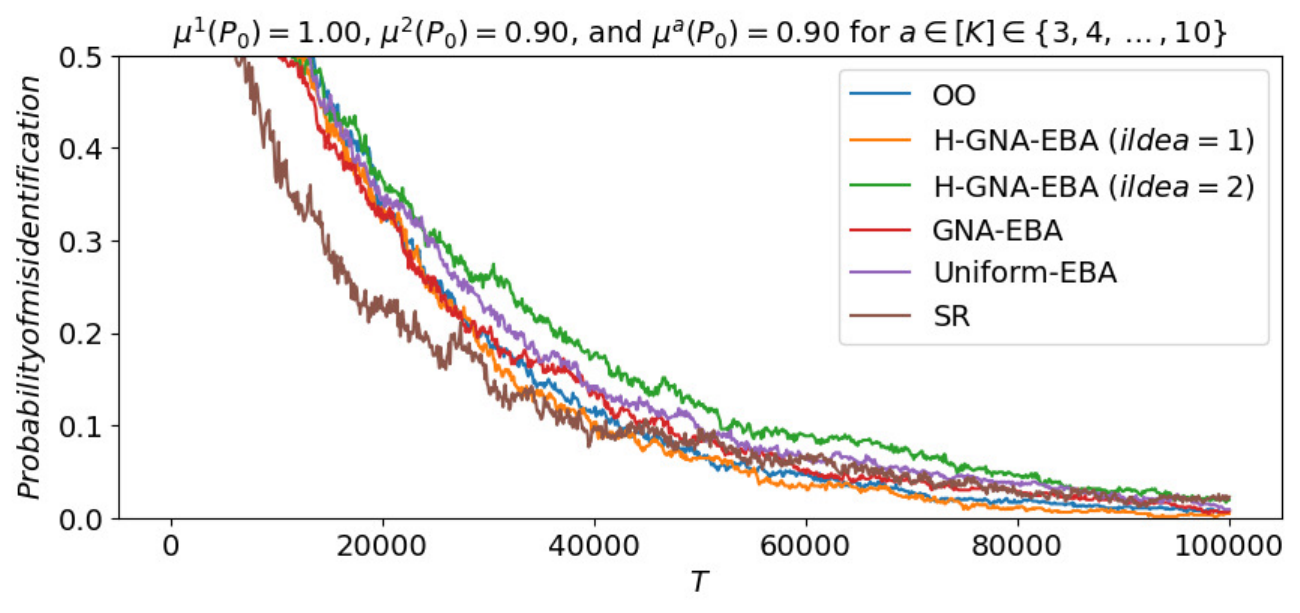}
  \caption*{$K=10$.}
  \caption{Experimental results. The best arm is arm $1$ and $\mu^1(P) = 1$. The expected outcomes of suboptimal arms $0.90$ ($\mu^a(P) = 0.90$ for all $a\in[K]\backslash\{1\}$). The variances are drawn from a uniform distribution with support $[0.5, 10]$. We continue the strategies until $T = 100,000$. We conduct $100$ independent trials for each setting. For each $T\in\{100, 200, 300, \cdots, 99,900, 100,000\}$. The $y$-axis and $x$-axis denote the probability of misidentification and $T$, respectively.}
      \label{fig:synthetic_results10}
\end{figure}

\begin{figure}[ht]
  \centering
  \includegraphics[height=60mm]{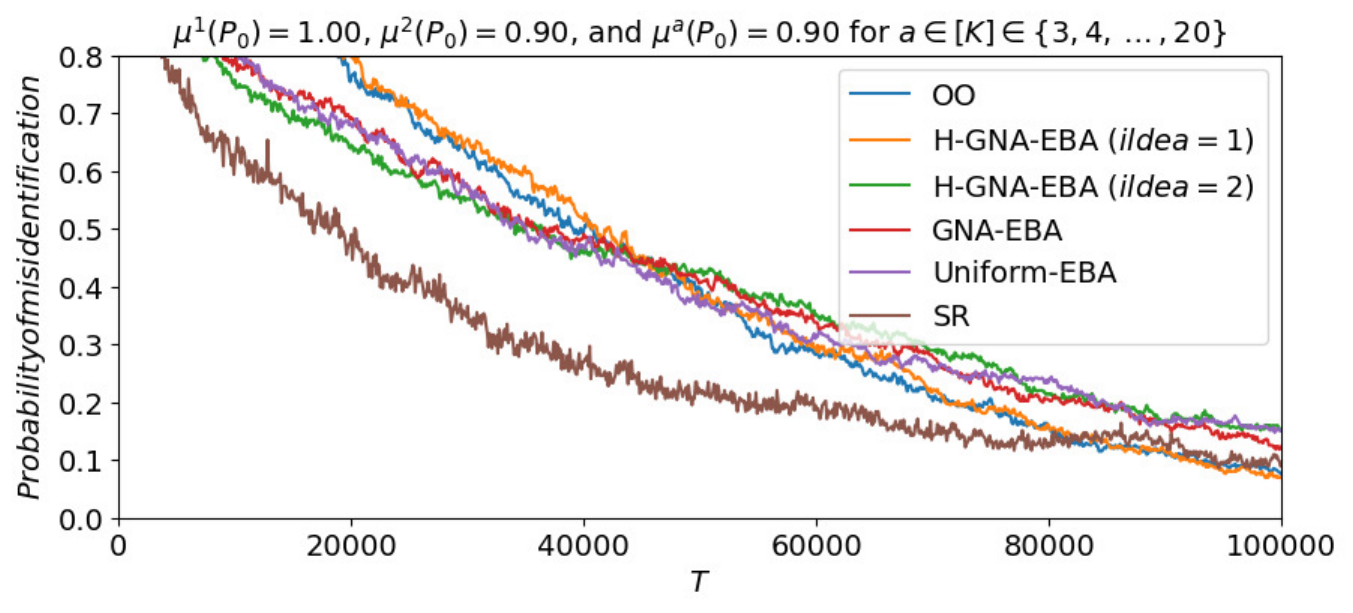}
  \caption*{$K=20$.}
  \caption{Experimental results. The best arm is arm $1$ and $\mu^1(P) = 1$. The expected outcomes of suboptimal arms $0.90$ ($\mu^a(P) = 0.90$ for all $a\in[K]\backslash\{1\}$). The variances are drawn from a uniform distribution with support $[0.5, 10]$. We continue the strategies until $T = 100,000$. We conduct $100$ independent trials for each setting. For each $T\in\{100, 200, 300, \cdots, 99,900, 100,000\}$. The $y$-axis and $x$-axis denote the probability of misidentification and $T$, respectively.}
      \label{fig:synthetic_results11}
\end{figure}

\clearpage

\begin{figure}[ht]
  \centering
  \includegraphics[height=60mm]{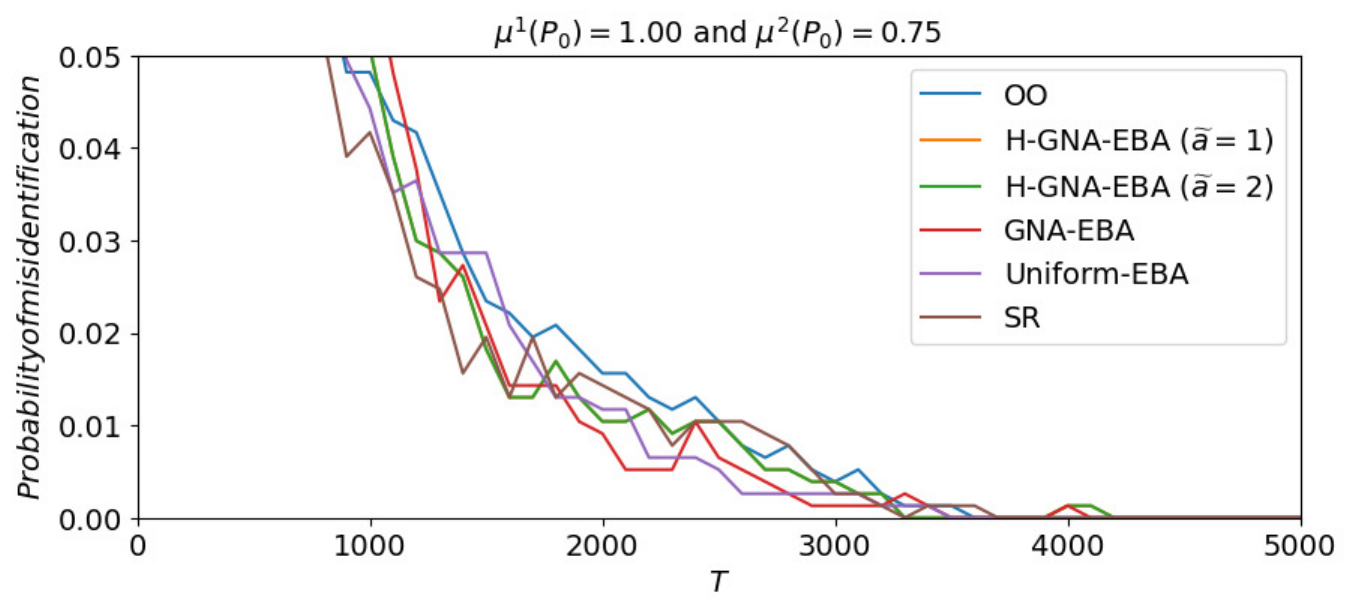}
  \caption*{$K=2$.}
  \caption{Experimental results. The best arm is arm $1$ and $\mu^1(P) = 1$. The expected outcomes of suboptimal arms $0.75$. The variances are drawn from a uniform distribution with support $[0.5, 10]$. We continue the strategies until $T = 100,000$. We conduct $100$ independent trials for each setting. For each $T\in\{100, 200, 300, \cdots, 99,900, 100,000\}$. The $y$-axis and $x$-axis denote the probability of misidentification and $T$, respectively.}
      \label{fig:synthetic_results12}
\end{figure}

\begin{figure}[ht]
  \centering
  \includegraphics[height=60mm]{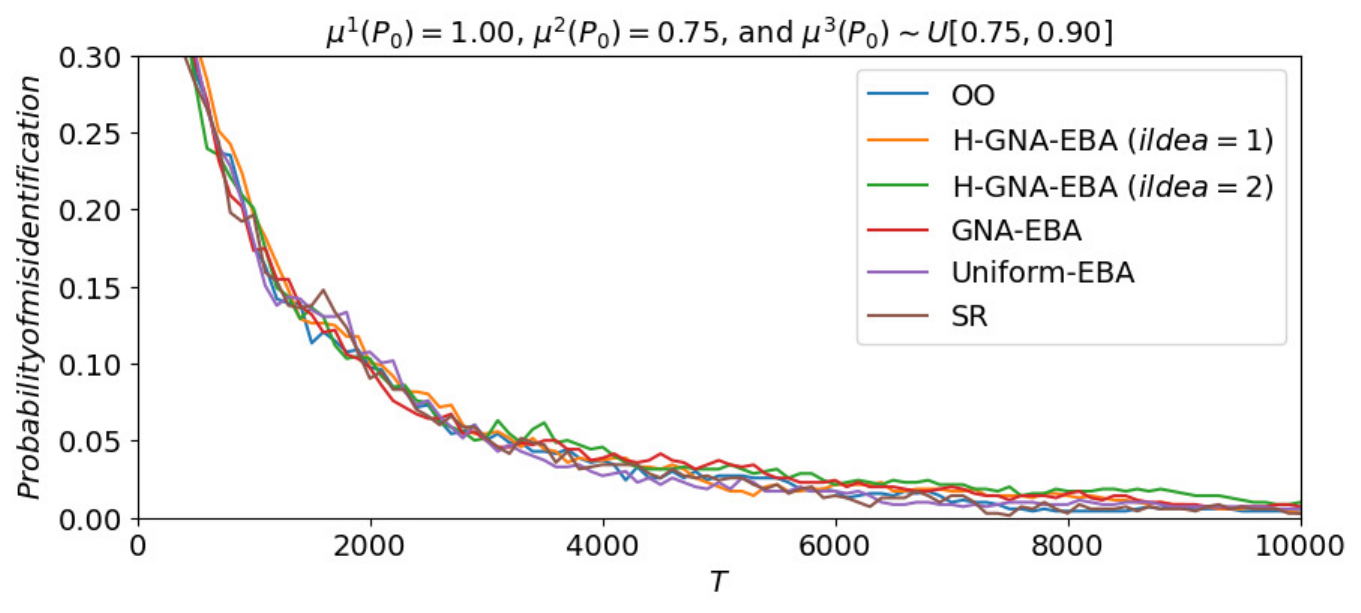}
  \caption*{$K=3$.}
  \caption{Experimental results.  The expected outcomes of suboptimal arms $0.75$ ($\mu^a(P) = 0.75$ for all $a\in[K]\backslash\{1\}$). The variances are drawn from a uniform distribution with support $[0.5, 10]$. We continue the strategies until $T = 100,000$. We conduct $100$ independent trials for each setting. For each $T\in\{100, 200, 300, \cdots, 99,900, 100,000\}$. The $y$-axis and $x$-axis denote the probability of misidentification and $T$, respectively.}
      \label{fig:synthetic_results13}
\end{figure}

\begin{figure}[ht]
  \centering
  \includegraphics[height=60mm]{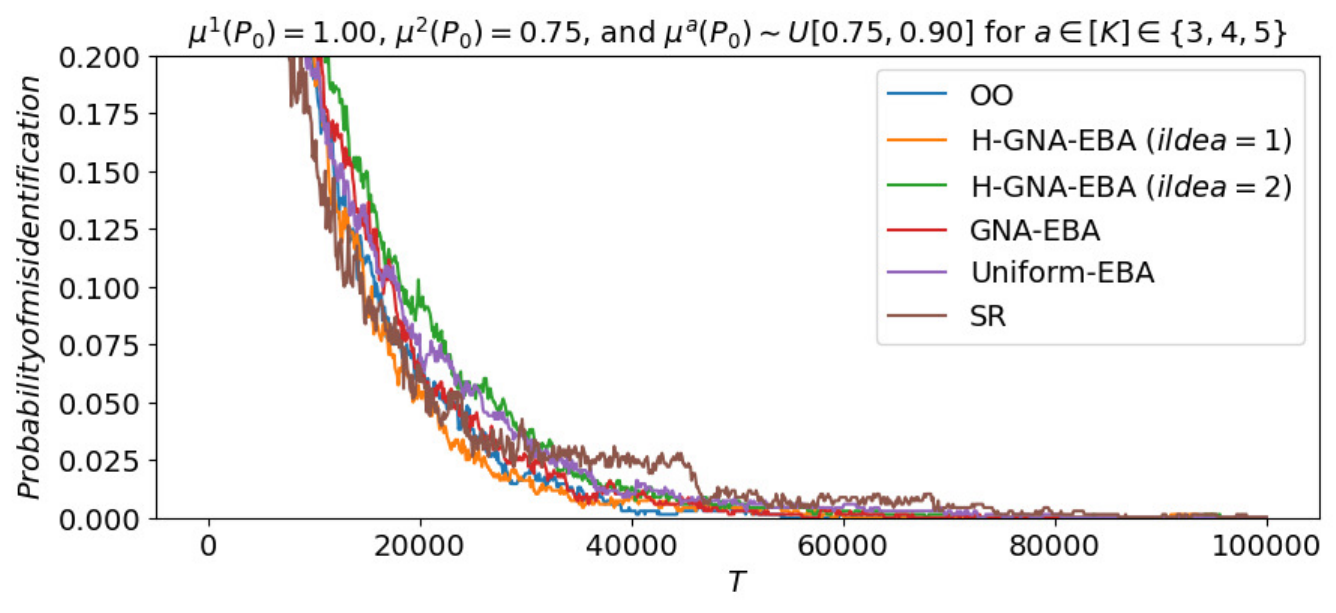}
  \caption*{$K=5$.}
  \caption{Experimental results.  The expected outcomes of suboptimal arms $0.75$ ($\mu^a(P) = 0.75$ for all $a\in[K]\backslash\{1\}$). The variances are drawn from a uniform distribution with support $[0.5, 10]$. We continue the strategies until $T = 100,000$. We conduct $100$ independent trials for each setting. For each $T\in\{100, 200, 300, \cdots, 99,900, 100,000\}$. The $y$-axis and $x$-axis denote the probability of misidentification and $T$, respectively.}
      \label{fig:synthetic_results14}
\end{figure}

\begin{figure}[ht]
  \centering
  \includegraphics[height=60mm]{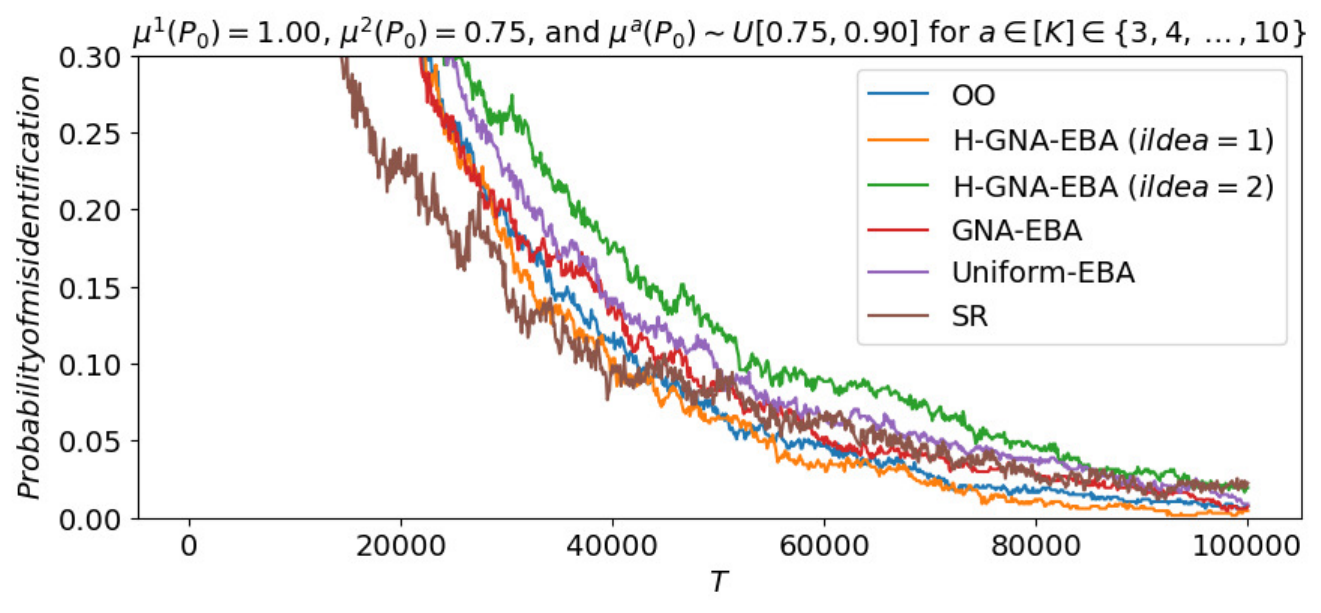}
  \caption*{$K=10$.}
  \caption{Experimental results.  The expected outcomes of suboptimal arms $0.75$ ($\mu^a(P) = 0.75$ for all $a\in[K]\backslash\{1\}$). The variances are drawn from a uniform distribution with support $[0.5, 10]$. We continue the strategies until $T = 100,000$. We conduct $100$ independent trials for each setting. For each $T\in\{100, 200, 300, \cdots, 99,900, 100,000\}$. The $y$-axis and $x$-axis denote the probability of misidentification and $T$, respectively.}
      \label{fig:synthetic_results19}
\end{figure}

\begin{figure}[ht]
  \centering
  \includegraphics[height=60mm]{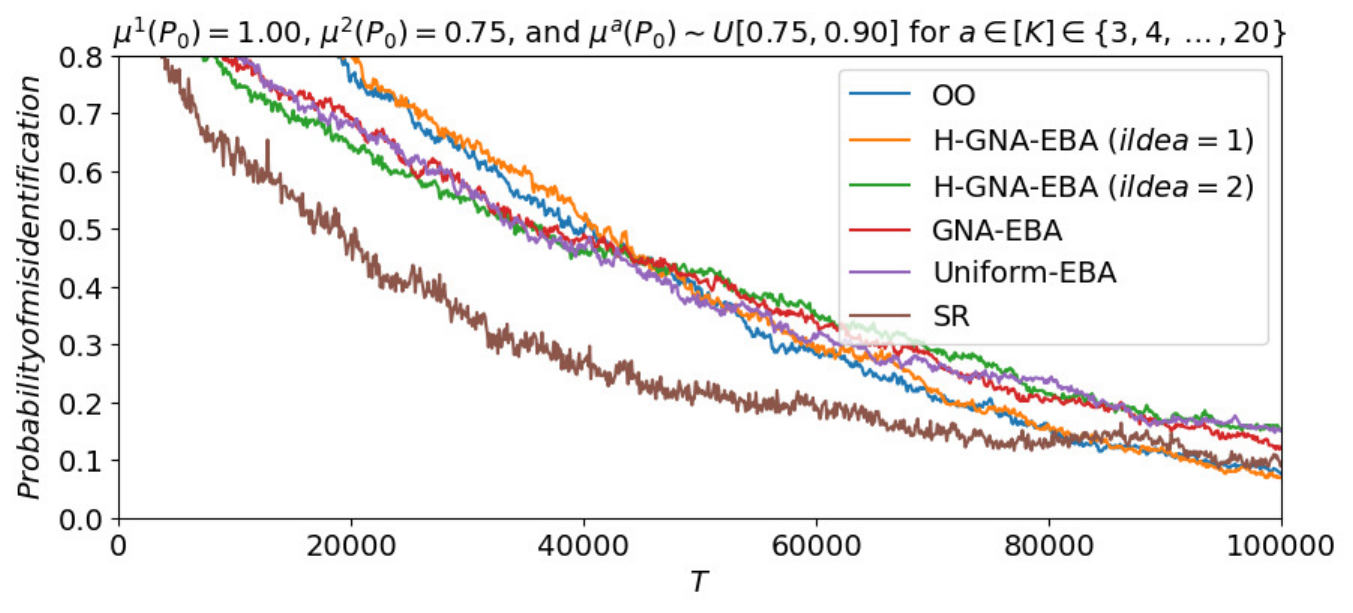}
  \caption*{$K=20$.}
  \caption{Experimental results.  The expected outcomes of suboptimal arms $0.75$ ($\mu^a(P) = 0.75$ for all $a\in[K]\backslash\{1\}$). The variances are drawn from a uniform distribution with support $[0.5, 10]$. We continue the strategies until $T = 100,000$. We conduct $100$ independent trials for each setting. For each $T\in\{100, 200, 300, \cdots, 99,900, 100,000\}$. The $y$-axis and $x$-axis denote the probability of misidentification and $T$, respectively.}
      \label{fig:synthetic_results20}
\end{figure}

\end{document}